\documentclass[a4paper,11pt]{amsart}

\usepackage{amsmath, amsthm, amsfonts, amssymb}
\usepackage[bookmarks=false]{hyperref}
\usepackage{enumerate}

\usepackage{color}

\numberwithin{equation}{section}

\setcounter{tocdepth}{1}

\setlength{\evensidemargin}{0pt}
\setlength{\oddsidemargin}{0pt}
\setlength{\topmargin}{-10pt}
\setlength{\textheight}{670pt}
\setlength{\textwidth}{450pt}
\setlength{\headsep}{20pt}
\setlength{\footskip}{30pt}
\setlength{\parindent}{0pt}
\setlength{\parskip}{1ex plus 0.5ex minus 0.2ex}

\newtheorem{letterthm}{Theorem}

\newtheorem{lettercor}[letterthm]{Corollary}

\newtheorem{thm}{Theorem}[section]
\newtheorem{lem}[thm]{Lemma}

\newtheorem{cor}[thm]{Corollary}
\newtheorem{prop}[thm]{Proposition}

\theoremstyle{definition}

\newtheorem{df}[thm]{Definition}

\newtheorem{claim}[thm]{Claim}

\newcommand{\R}{\mathbf{R}}
\newcommand{\C}{\mathbf{C}}
\newcommand{\Z}{\mathbf{Z}}
\newcommand{\F}{\mathbf{F}}

\newcommand{\N}{\mathbf{N}}

\newcommand{\Ad}{\operatorname{Ad}}
\newcommand{\id}{\text{\rm id}}

\newcommand{\Aut}{\mathord{\text{\rm Aut}}}
\newcommand{\rL}{\mathord{\text{\rm L}}}

\newcommand{\rC}{\mathord{\text{\rm C}}}

\newcommand{\rd}{\mathord{\text{\rm d}}}
\newcommand{\rT}{\mathord{\text{\rm T}}}

\newcommand{\rD}{\mathord{\text{\rm D}}}
\newcommand{\rE}{\mathord{\text{\rm E}}}

\newcommand{\Out}{\mathord{\text{\rm Out}}}

\newcommand{\core}{\mathord{\text{\rm c}}}
\newcommand{\Ind}{\mathord{\text{\rm Ind}}}

\newcommand{\Tr}{\mathord{\text{\rm Tr}}}
\newcommand{\Ball}{\mathord{\text{\rm Ball}}}
\newcommand{\spn}{\mathord{\text{\rm span}}}

\newcommand{\ovt}{\mathbin{\overline{\otimes}}}

\newcommand{\dpr}{^{\prime\prime}}

\begin{document}

\title[Factoriality, Connes' invariants and fullness of amalgamated free products]{Factoriality, Connes' type III invariants and fullness of amalgamated free product von Neumann algebras}

\begin{abstract}
We investigate factoriality, Connes'\ type ${\rm III}$ invariants and fullness of arbitrary amalgamated free product von Neumann algebras using Popa's deformation/rigidity theory. Among other things, we generalize many previous structural results on amalgamated free product von Neumann algebras and we obtain new examples of full amalgamated free product factors for which we can explicitely compute Connes' type ${\rm III}$ invariants.
\end{abstract}

\author{Cyril Houdayer}
\address{Laboratoire de Math\'ematiques d'Orsay\\ Universit\'e Paris-Sud\\ CNRS\\ Universit\'e Paris-Saclay\\ 91405 Orsay\\ FRANCE}
\email{cyril.houdayer@math.u-psud.fr}
\thanks{CH is supported by ERC Starting Grant GAN 637601}

\author{Yusuke Isono}
\address{RIMS, Kyoto University, 606-8502 Kyoto, JAPAN}
\email{isono@kurims.kyoto-u.ac.jp}
\thanks{YI is supported by JSPS Research Fellowship}

\subjclass[2010]{46L10, 46L54, 46L55, 37A20, 37A40}
\keywords{Amalgamated free product von Neumann algebras; Asymptotic centralizer; Equivalence relations; Fullness; Popa's deformation/rigidity theory; Type ${\rm III}$ factors; Ultraproduct von Neumann algebras}

\maketitle

\section{Introduction and statement of the main results}

The amalgamated free product construction in operator algebras was introduced by Voiculescu in \cite{Vo85, VDN92}. It is one of the central objects of Voiculescu's free probability theory.  We briefly review the definition of the amalgamated free product in the framework of von Neumann algebras. Throughout this introduction, we use the following notation and terminology. For every $i \in \{1, 2\}$, let $B \subset M_i$ be any inclusion of $\sigma$-finite von Neumann algebras with faithful normal conditional expectation $\rE_i : M_i \to B$. The {\em amalgamated free product} $(M, \rE) = (M_1, \rE_1) \ast_B (M_2, \rE_2)$ is a pair of von Neumann algebra $M$ generated by $M_1$ and $M_2$ and faithful normal conditional expectation $\rE : M \to B$ such that $M_1$ and $M_2$ are {\em freely independent} with respect to $\rE$:
$$\rE(x_1 \cdots x_n) = 0 \; \text{ whenever } \; n \geq 1, \; x_j \in M_{i_j}^\circ \; \text{ and } \; i_1 \neq \cdots \neq  i_{n}$$
where $M_i^\circ = \ker(\rE_i)$. We refer to \cite{Vo85, Po90, Ue98} and Section \ref{section:results} for further details regarding the definition of $M$. When $B = \C 1$, then $\rE_{i} = \varphi_{i} 1$ with $\varphi_{i} \in (M_{i})_{\ast}$, $\rE = \varphi 1$ with $\varphi \in M_{\ast}$ and we say that $(M, \varphi) = (M_1, \varphi_1) \ast (M_2, \varphi_2)$ is the {\em free product} von Neumann algebra. In what follows, we simply write AFP for amalgamated free product.

The aim of this paper is to investigate factoriality, Connes'\ type ${\rm III}$ invariants and fullness of arbitrary AFP von Neumann algebras using Popa's deformation/rigidity theory. Among other things, we generalize many previous structural results on AFP von Neumann algebras \cite{Ue98, Ue00, Ue10, Ue11, Ue12} and we obtain new examples of full AFP factors for which we can explicitely compute Connes' type ${\rm III}$ invariants. Let us point out that the questions of factoriality, Connes' type ${\rm III}$ invariants and fullness for {\em plain} free product von Neumann algebras were completely settled by Ueda in \cite{Ue10, Ue11}.

\subsection*{Factoriality of AFP von Neumann algebras}

We introduce some further terminology. Following \cite[D\'efinition 1.3.1]{Co72} (see also \cite[Th\'eor\`eme 1.2.1]{Co72}), for any $\sigma$-finite von Neumann algebra $N$ and any faithful state $\varphi \in N_\ast$, Connes' invariant $\rT(N)$ is the subgroup of $\R$ defined as 
$$\rT(N) = \left \{ t \in \R \mid \exists u \in \mathcal U(N) \text{ such that } \sigma_t^\varphi = \Ad(u) \right \}.$$
We denote by $\core(N) = N \rtimes_{\sigma^{\varphi}} \R$ the {\em continuous core} of $N$ that, up to isomorphism, does not depend on the choice of the faithful state $\varphi \in N_{\ast}$. Following \cite[Definition 4.1]{Ue12}, we say that an inclusion of von Neumann algebras $Q \subset N$ is {\em entirely nontrivial} if no nonzero direct summand of $Q$ is a direct summand of $N$. Observe that when $Q = \C 1$, the inclusion $\C1 \subset N$ is entirely nontrivial if and only if $N \neq \C1$. We say that an inclusion of von Neumann algebras $Q \subset N$ is {\em with expectation} if there exists a faithful normal conditional expectation $\rE_{Q} : N \to Q$. Let now $N$ be any $\sigma$-finite von Neumann algebra and $P, Q \subset N$ any von Neumann subalgebras with expectation. Following \cite{Po01, Po03, HI15a}, we say that $P$ {\em embeds with expectation into} $Q$ {\em inside} $N$ and we write $P \preceq_{N} Q$ if a corner of $P$ can be conjugated with expectation into a corner of $Q$ via a nonzero partial isometry in $N$. We refer to Definition \ref{df-intertwining} for a more precise statement. Following \cite{OP07}, we say that $P$ is {\em amenable relative to} $Q$ {\em inside} $N$ if there exists a conditional expectation $\Phi : \langle N, Q \rangle \to P$ such that $\Phi |_{N}$ is faithful and normal.

Our first main result is a structure theorem for arbitrary AFP von Neumann algebras in which we investigate factoriality, computation of Connes' $\rT$ invariant, semifiniteness and amenability. 

\begin{letterthm}\label{thmA}
Keep the same notation as above. Assume that $M_1 \npreceq_{M_1} B$ and the inclusion $B \subset M_2$ is entirely nontrivial. Then the following assertions hold:

\begin{itemize}
\item [$(\rm i)$] $\mathcal Z(M) = M' \cap B$ and so $\mathcal Z(M) = \mathcal Z(M_1) \cap \mathcal Z(M_2) \cap \mathcal Z(B)$.

\item [$(\rm ii)$] For every faithful state $\varphi \in M_\ast$ such that $\varphi = \varphi \circ \rE$, we have 
$$\rT(M) = \left\{ t \in \R \mid \exists u \in \mathcal U(B) \text{ such that } \sigma_t^\varphi = \Ad(u) \right \}.$$

\item [$(\rm iii)$] $M$ is semifinite if and only if there exists a faithful normal semifinite trace $\Tr_{B}$ on $B$ such that $\Tr_{B} \circ \rE_i$ is a trace on $M_i$ for every $i \in \{1, 2\}$.

\item [$(\rm iv)$] For every nonzero projection $z \in \mathcal Z(M)$, $Mz$ is not amenable relative to $M_1$ inside $M$ and thus $Mz$ is not amenable.
\end{itemize} 

Assume moreover that $\core(M_{1}) \npreceq_{\core(M_{1})} \core(B)$. Then we have
\begin{itemize}

\item [$(\rm v)$]  $\mathcal Z(\core(M)) = \core(M)' \cap \core(B)$ and so $\mathcal Z(\core(M)) = \mathcal Z(\core(M_1)) \cap \mathcal Z(\core(M_2)) \cap \mathcal Z(\core(B))$.

\end{itemize} 
\end{letterthm}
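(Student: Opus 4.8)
The plan is to treat the five assertions in two groups, using Popa's deformation/rigidity machinery for AFP von Neumann algebras. The central tool is the free malleable deformation of $M = M_1 \ast_B M_2$ (the one constructed from the amalgamated free product with $B \ast_B (B \ovt \rL(\F_2))$, or equivalently via the canonical automorphisms coming from the generator of a free group), together with the fundamental dichotomy for such deformations: for a subalgebra $P \subset M_1$ with expectation, either the deformation converges uniformly on the unit ball of $P$ (a ``rigidity'' conclusion that forces $P \preceq_M B$ or $P$ being somewhat trivial), or one can locate inside $M_1$ (or its corner) a nonamenability / intertwining obstruction. First I would record the elementary reductions: we may assume $M$ has separable predual after a standard separability argument, fix a faithful state $\varphi = \varphi \circ \rE$ with $\varphi|_B$ faithful, and work with the modular automorphism group $\sigma^\varphi$, which restricts to $\sigma^{\varphi|_B}$ on $B$ and globally preserves each $M_i$.

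For assertion $(\mathrm i)$: I would show $\cZ(M) \subset B$. Take $z \in \cZ(M)$; since $z$ commutes with $M_2$ and the inclusion $B \subset M_2$ is entirely nontrivial, a spectral/intertwining argument (or directly: $z$ commutes with $M_1$ which does not embed into $B$) together with the malleable deformation applied to $\C z \subset M_1$ shows the deformation must converge uniformly on $z$, whence $z \in M_1$; symmetrically, or by a free-independence computation on words, $z \in M_1 \cap M_2 = B$. Then $\cZ(M) = M' \cap B$, and intersecting with $B$ and using that $M_1, M_2$ generate $M$ gives $\cZ(M) = \cZ(M_1) \cap \cZ(M_2) \cap \cZ(B)$. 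Assertion $(\mathrm{ii})$ is the same argument applied one step up: if $\sigma_t^\varphi = \Ad(u)$ for some $u \in \cU(M)$, then $u$ normalizes the deformation in a controlled way and the hypothesis $M_1 \npreceq_{M_1} B$ forces $u \in M_1$ (via the rigidity side of the dichotomy, since $\Ad(u)$ intertwines the canonical deformation with a perturbation of itself), then $u \in M_1 \cap M_2^{\text{c}}$-type reasoning pins $u$ into $B$; conversely any such $u \in \cU(B)$ obviously gives $t \in \rT(M)$ because $\sigma^\varphi$ restricted to $B$ is inner via $u$ and $B$ is globally invariant, while $\sigma^\varphi$ acts compatibly on the free product. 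Assertion $(\mathrm{iii})$: one direction is immediate (if such $\Tr_B$ exists, $\Tr_B \circ \rE$ is a faithful normal semifinite trace on $M$); the converse uses $(\mathrm{ii})$ — semifiniteness of $M$ means $\rT(M)$ contains a neighborhood of $0$ in the appropriate sense, i.e. $\sigma^\varphi$ is inner, and the description of $\rT(M)$ via $\cU(B)$ lets one transport the trace down to $B$ and check compatibility with $\rE_i$. Assertion $(\mathrm{iv})$ is the nonamenability statement: if $Mz$ were amenable relative to $M_1$ inside $M$ for some central projection $z$, then by the standard relative-amenability-versus-deformation argument (Popa–Ozawa style, adapted to AFP by the authors' earlier work), the deformation would converge uniformly on $(Mz)_1$, forcing $Mz \preceq_M M_1$ in a degenerate way incompatible with $M_1 \npreceq_{M_1} B$ and entire nontriviality of $B \subset M_2$ — contradiction.

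For assertion $(\mathrm v)$, I would pass to the continuous cores. The key structural input is that $\core(M) = M \rtimes_{\sigma^\varphi} \R$ is itself (isomorphic to) an amalgamated free product $\core(M_1) \ast_{\core(B)} \core(M_2)$ — this is the continuous-core functoriality for AFP algebras, which I would either cite from the authors' companion work or re-derive from the fact that $\sigma^\varphi$ preserves the free position of $M_1, M_2$ over $B$. Granting that, assertion $(\mathrm v)$ is literally assertion $(\mathrm i)$ applied to the AFP decomposition of $\core(M)$, with the hypothesis $\core(M_1) \npreceq_{\core(M_1)} \core(B)$ playing the role of $M_1 \npreceq_{M_1} B$, and the entire nontriviality of $\core(B) \subset \core(M_2)$ following from that of $B \subset M_2$ by taking crossed products.

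The main obstacle I anticipate is making the dichotomy for the free malleable deformation genuinely quantitative in the $\sigma$-finite (type $\mathrm{III}$) setting rather than the tracial one: the intertwining-by-bimodules theory $\preceq_M$ with expectation, the comparison of states under the deformation, and the uniform-convergence-on-the-ball arguments all need the $\sigma$-finite versions developed in the authors' earlier papers, and one must carefully check that the conditional expectations behave well under the deformation and under passage to the core. The second delicate point is establishing $\core(M) \cong \core(M_1) \ast_{\core(B)} \core(M_2)$ with the \emph{correct} conditional expectation so that the hypotheses of $(\mathrm i)$ transfer verbatim; once that identification is in place, $(\mathrm v)$ is essentially formal.
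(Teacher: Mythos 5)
Your overall architecture---reduce everything to a relative commutant/normalizer control statement for subalgebras of $M_1$, deduce $(\rm i)$--$(\rm iv)$, and transfer $(\rm i)$ to the continuous core for $(\rm v)$---matches the paper's, and your treatment of $(\rm v)$ is essentially what the paper does (except that the persistence of entire nontriviality under passage to the core is not automatic ``by taking crossed products'': the paper proves it in Claim \ref{claim-technical} by averaging a witnessing central projection over the dual trace-scaling action and descending via Takesaki duality). The genuine gap is in the engine you propose: the IPP free malleable deformation and its ``uniform convergence or intertwining'' dichotomy. To run that argument one must convert the hypothesis $M_1 \npreceq_{M_1} B$ into an analytic statement---a net of unitaries $u_n \in \mathcal U(M_1)$ with $\rE_B(x^* u_n y) \to 0$, equivalently failure of uniform convergence of the deformation on $\Ball(M_1)$. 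That conversion (\cite[Theorem 4.3]{HI15a}) is only valid when the subalgebra has a finite corner, and here $M_1$ may be of type ${\rm III}$. Worse, Theorem \ref{rem-intertwining} of the paper exhibits inclusions with $N \npreceq_N Q$ yet $\core(N) \preceq_{\core(N)} \core(Q)$, showing that no such analytic criterion can exist in the type ${\rm III}$ setting; the dichotomy you invoke simply does not see the hypothesis $M_1 \npreceq_{M_1} B$. You flag this as ``the main obstacle I anticipate,'' but it cannot be engineered around within your framework---it is precisely why the paper abandons the deformation for this theorem (it reappears only in the proof of Theorem \ref{thmB}).

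What the paper does instead is purely operator-algebraic: since $A \subset M_1$ is with expectation, $A \lessdot_M M_1$ trivially; for a projection $z \in A' \cap M$ the algebra $Az \oplus Az^\perp$ (resp.\ $\langle A, u\rangle$ for $u \in \mathcal N_M(A)$ when $A' \cap M = \mathcal Z(A)$) is still amenable relative to $M_1$ by Lemma \ref{lem-elementary-properties2}, and the amenable absorption theorem of \cite{BH16} (Theorem \ref{thm-maximal-amenability}) forces it into $M_1$ under the sole hypothesis $A \npreceq_{M_1} B$. This gives Theorem \ref{thm-control}, from which $(\rm i)$ and $(\rm ii)$ follow ($\sigma_t^\varphi(M_1) = M_1$ puts $u$ in $\mathcal N_M(M_1)$), with the final descent from $M_1$ to $B$ done by Ueda's word-length argument using entire nontriviality of $B \subset M_2$. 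Assertion $(\rm iv)$ is likewise a direct application of amenable absorption to $Q = Mz \oplus M_1 z^\perp$: if $Q \lessdot_M M_1$ then $Q \subset M_1$, whence $M_2 z = Bz$, contradicting entire nontriviality---no deformation estimate is needed. Finally, your sketch of the converse direction of $(\rm iii)$ is too thin: semifiniteness gives $\rT(M) = \R$ and $(\rm ii)$ gives unitaries in $B$ for each $t$, but producing a single trace $\Tr_B$ with each $\Tr_B \circ \rE_i$ tracial requires the cocycle argument of \cite[Theorem 4.3 (3)]{Ue12}, which the paper cites rather than reproves.
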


Let us point out that the extra assumption that $\core(M_{1}) \npreceq_{\core(M_{1})} \core(B)$ in Theorem \ref{thmA} is satisfied whenever there exists a semifinite von Neumann subalgebra with expectation $A \subset M_{1}$ such that $A \npreceq_{M_{1}} B$ or if $B$ is of type ${\rm I}$ (see Proposition \ref{prop-intertwining-semifinite}). In that respect, Theorem \ref{thmA} above generalizes \cite[Theorem 4.3]{Ue12} where it is assumed that $M_{1}$ contains a finite von Neumann subalgebra with expectation $P \subset M_{1}$ such that $P \npreceq_{M_{1}} B$. Quite surprisingly, we also show in Theorem \ref{rem-intertwining} that there are natural examples of inclusions $Q \subset N$ where $N \npreceq_{N} Q$ while $\core(N) \preceq_{\core(N)} \core(Q)$. These examples demonstrate that when $P \subset N$ is a type ${\rm III}$ von Neumann subalgebra with expectation, there is no `good' analytic criterion in terms of nets of unitaries witnessing the fact that $P \npreceq_{N} Q$ as in \cite[Theorem 4.3 (5)]{HI15a}.

The proof of Theorem \ref{thmA} consists in generalizing a key result by Ioana--Peterson--Popa \cite[Theorem 1.1]{IPP05} about controlling relative commutants and normalizers in tracial AFP von Neumann algebras to {\em arbitrary} AFP von Neumann algebras (see Theorem \ref{thm-control}). However, as there is no good analytic criterion in terms of nets of unitaries witnessing the fact that $M_{1} \npreceq_{M_{1}} B$, we cannot generalize the proof of \cite[Theorem 1.1]{IPP05}. Instead, we use the notion of relative amenability via the existence of non normal conditional expectations as in \cite{BH16, Is17} and we apply the main result of \cite{BH16} regarding amenable absorption in arbitrary AFP von Neumann algebras.

\subsection*{Fullness of AFP von Neumann algebras}

Let $N$ be any $\sigma$-finite von Neumann algebra. Following \cite{Co74}, we say that a uniformly bounded net $(x_{j})_{j \in J}$ in $N$ is {\em centralizing} if $\lim_{j} \|x_{j} \zeta - \zeta x_{j}\| = 0$ for every $\zeta \in \rL^{2}(N)$ and {\em trivial} if there exists a bounded net $(\lambda_{j})_{j \in J}$ in $\C$ such that $x_{j} - \lambda_{j} 1 \to 0$ $\ast$-strongly. We say that $N$ is a {\em full} factor if every centralizing uniformly bounded net is trivial. This is equivalent to saying that for every nonempty directed set $J$ and every cofinal ultrafilter $\omega$ on $J$, the corresponding {\em asymptotic centralizer} $N_{\omega}$ is trivial, that is, $N_{\omega} = \C 1$. In that case, $\Out(N)$ is a complete topological group and Connes' invariant $\tau(N)$ is defined as the weakest topology on $\R$ that makes the modular homomorphism $\R \to \Out(N)$ continuous. When $N$ is a type ${\rm II_{1}}$ factor, $N$ is full if and only if $N$ does not have property Gamma of Murray--von Neumann \cite{MvN43} (see \cite[Corollary 3.8]{Co74}). 

The notion of fullness has always played a central role in von Neumann algebras. Murray--von Neumann \cite{MvN43} showed that free group factors are full, thus demonstrating that free group factors are not isomorphic to the hyperfinite type ${\rm II_{1}}$ factor. Connes \cite{Co75b} proved a spectral gap characterization of full type ${\rm II_{1}}$ factors that was a crucial ingredient in his proof of the uniqueness of the injective type ${\rm II_{1}}$ factor (we refer to \cite{Ma16, HMV16} for recent spectral gap characterizations of full type ${\rm III}$ factors). Over the last 15 years, the notion of fullness has been a key tool in proving various rigidity results in the framework of Popa's deformation/rigidity theory (see e.g.\ \cite{Pe06, Po06, Io12, HI15b, HU15b, HMV16}). For all these reasons, it is natural to investigate the fullness property for arbitrary AFP von Neumann algebras.

In order to understand the position of centralizing nets in $M = M_{1} \ast_{B} M_{2}$, Ueda  \cite{Ue98, Ue00, Ue10, Ue11, Ue12} generalized Popa's asymptotic orthogonality property \cite{Po83} for free group factors to AFP von Neumann algebras. For this method to work though, one needs to assume not only that $M_{1} \npreceq_{M_{1}} B$ but also that $M_{1}$ has `many' unitaries that are perpendicular to $B$ with respect to $\rE_{1} : M_{1} \to B$ and that these unitaries lie in the centralizer of a fixed faithful normal state on $M_{1}$. In order to understand the position of centralizing nets in $M = M_{1} \ast_{B} M_{2}$, we use a completely different method based on Popa's deformation/rigidity theory. As we will see, our method allows us to generalize many of the results obtained by Ueda in \cite{Ue98, Ue00, Ue10, Ue11, Ue12}. We proceed in two steps.

Firstly, we prove a {\em spectral gap} rigidity result for arbitrary von Neumann subalgebras $P \subset M$ with expectation in the spirit of \cite[Theorem 6.3]{Io12} (see also \cite[Theorem 1.5]{Po06}, \cite[Theorem 4.3]{Pe06} and \cite[Theorem A]{HI15b} for related results). 

\begin{letterthm}\label{thmB}
Keep the same notation as above. Let $P \subset M$ be any von Neumann subalgebra with expectation. Then at least one of the following assertions holds true:

\begin{itemize}
\item [$(\rm i)$] For every nonempty directed set $J$ and every cofinal ultrafilter ${\omega}$ on $J$, we have
$$P_{\omega} \subset M \vee B^{\omega} \quad \quad \text{and} \quad \quad \core(P)_\omega \subset \core(M) \vee \core(B)^\omega.$$

\item [$(\rm ii)$] There exists $i \in \{1, 2\}$ such that $\core(P) \preceq_{\core(M)} \core(M_{i})$.

\item [$(\rm iii)$] There exists a nonzero projection $z \in \mathcal Z(\core(P)' \cap \core(M))$ such that $\core(P)z$ is amenable relative to $\core(B)$ inside $\core(M)$.
\end{itemize}
\end{letterthm}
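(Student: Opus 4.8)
The plan is to use the free malleable deformation of the AFP $M = M_1 \ast_B M_2$ together with a spectral gap argument applied on the level of the continuous core. First I would recall that the AFP $M = M_1 \ast_B M_2$ carries the natural one-parameter family of automorphisms coming from the word-length deformation: embedding $M \subset \widetilde M := M \ast_B (B \ovt \rL(\mathbf F_2))$ (or the closely related free Araki--Woods-type dilation used in \cite{IPP05, Io12, HI15b}), one has rotation automorphisms $\theta_t \in \Aut(\widetilde M, \widetilde \rE)$ with $\theta_t \to \id$ pointwise in $\|\cdot\|_{\varphi}$, and these commute with the modular flow of a suitable state so that they pass to the continuous core, giving $\widetilde\theta_t \in \Aut(\core(\widetilde M))$ with $\core(\widetilde M) = \core(M) \ast_{\core(B)} (\core(B) \ovt \rL(\mathbf F_2))$. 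The key input from the earlier part of the paper (Theorem \ref{thm-control}, the arbitrary-AFP generalization of \cite[Theorem 1.1]{IPP05}) is the dichotomy that controls relative commutants inside $\widetilde M$: if a von Neumann subalgebra $Q \subset M$ (with expectation) satisfies $\theta_t \to \id$ uniformly on the unit ball of $Q$, then either $Q \preceq_M M_i$ for some $i$, or $Q$ — more precisely its relative commutant together with a quasi-normalizer — stays inside $M_i$, or one gets relative amenability over $B$.

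The heart of the argument is the spectral gap step. Fix a nonempty directed set $J$ and a cofinal ultrafilter $\omega$. Using the transversality property of the deformation $\widetilde\theta_t$ on $\core(\widetilde M)$ (in the sense of Popa, as extended to the non-tracial setting via the continuous core and $\|\cdot\|_{\varphi}^\#$-norms), I would show that for a centralizing net $(x_j)$ in $\core(P)$ and fixed $t$, the quantity $\|\widetilde\theta_t(x_j) - x_j\|_{\varphi}^\#$ is, up to a multiplicative constant, controlled by $\|\widetilde\theta_t(e)x_j - x_j \widetilde\theta_t(e)\|$ for projections $e$ in a finite subset of $\core(M_1) \ominus \core(B)$ (Popa's trick: the deformation moves $x_j$ only through commutators with the "new" directions). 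Since $(x_j)$ is centralizing, these commutators tend to $0$; hence $\widetilde\theta_t \to \id$ uniformly on the unit ball of the relevant ultrapower algebra built from $\core(P)$. Applying Theorem \ref{thm-control}/Theorem \ref{thmA}-type control to this situation produces the trichotomy: either $\core(P)_\omega$ is forced back inside $\core(M) \vee \core(B)^\omega$ (and simultaneously, by running the same argument one level down without passing to the core, $P_\omega \subset M \vee B^\omega$), which is case (i); or $\core(P) \preceq_{\core(M)} \core(M_i)$, which is case (ii); or a corner of $\core(P)$ cut by a central projection of $\core(P)' \cap \core(M)$ is amenable relative to $\core(B)$, which is case (iii). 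To get case (i) in the two stated forms simultaneously, I would note that the modular flow on $M$ and on $\core(M)$ are compatible and that $\core(M) \vee \core(B)^\omega = \core(M \vee B^\omega)$ under the obvious identifications, so the core statement and the non-core statement are two manifestations of the same spectral gap conclusion; alternatively one derives $P_\omega \subset M \vee B^\omega$ first and lifts it through the crossed product by $\R$.

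The main obstacle is the absence of a $2$-norm and of traces: one cannot directly invoke the tracial machinery of \cite{IPP05, Io12}. The workaround — which is precisely why the paper is phrased in terms of the continuous core — is to do all the deformation/rigidity analysis on $\core(M)$, which is semifinite, and to use the main result of \cite{BH16} on amenable absorption in AFP algebras to handle the degenerate branch of the dichotomy (this is what delivers alternative (iii) with the correct normalization by a projection in $\mathcal Z(\core(P)' \cap \core(M))$ rather than something weaker). A secondary technical point is the bookkeeping with the ultrafilter and the directed set: one must verify that the deformation estimates are uniform in $j$ so that passing to $\omega$ is legitimate, and that $\core(P)_\omega$ is correctly identified with the corresponding subalgebra of $\core(M)^\omega$ — this uses that $\core(\cdot)$ and $(\cdot)_\omega$ commute appropriately for $\sigma$-finite algebras, which should be recorded as a preliminary lemma. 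Modulo these points, the proof is a non-tracial, AFP-adapted rerun of the spectral-gap-rigidity template of \cite[Theorem 6.3]{Io12}.
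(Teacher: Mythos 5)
Your overall strategy coincides with the paper's: transplant Ioana's spectral gap rigidity (\cite[Theorem 6.3]{Io12}) to the semifinite continuous core $\core(M) = \core(M_1) \ast_{\core(B)} \core(M_2)$ using the free malleable deformation of $\core(M) \ast_{\core(B)} (\core(B) \ovt \rL(\F_2))$, transversality, and the bimodule criterion for relative amenability. This is exactly what the paper does in its appendix, applied to the corners $p\,\core(P)\,p$ cut by nonzero finite trace projections $p$ (a localization you omit but which is needed to have $\|\cdot\|_2$-estimates in a merely semifinite algebra) and then glued via \cite[Lemma 2.1]{Po81}; it yields alternatives $(\rm ii)$, $(\rm iii)$ and the core half of $(\rm i)$.

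The genuine gap is the first half of assertion $(\rm i)$. You claim that $\core(M) \vee \core(B)^{\omega} = \core(M \vee B^{\omega})$ ``under the obvious identifications'', so that $P_{\omega} \subset M \vee B^{\omega}$ is the same statement as $\core(P)_\omega \subset \core(M)\vee\core(B)^\omega$, or alternatively that one could prove $P_{\omega} \subset M \vee B^{\omega}$ first and lift it. Neither works as stated: the deformation estimates only live on the semifinite core, so there is no direct spectral gap argument on $M$; and the relation between $P_{\omega}$ (built from $M^{\omega}$) and $\core(P)_{\omega}$ (built from $\core(M)^{\omega}$) is genuinely asymmetric, since one only has the inclusions $\core_{\varphi}(M) \subset \core_{\varphi^{\omega}}(M^{\omega}) \subset \core_{\varphi}(M)^{\omega}$ of \cite{MT13}, the middle algebra being a proper subalgebra of the right-hand one in general. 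The descent the paper performs is: show $\pi_{\varphi^{\omega}}(P_{\omega}) \subset \core(P)_{\omega} \cap \core_{\varphi^{\omega}}(M^{\omega})$, apply the trace preserving expectation $\mathcal E : \core_{\varphi}(M)^{\omega} \to \core_{\varphi^{\omega}}(M^{\omega})$, which carries $\core(M)\vee\core(B)^{\omega}$ onto $\core_{\varphi^{\omega}}(M\vee B^{\omega})$, and then identify $\pi_{\varphi^{\omega}}(M\vee B^{\omega})$ as the fixed point algebra of the dual trace scaling action (\cite[Theorem X.2.3]{Ta03}) to conclude $P_{\omega} \subset M \vee B^{\omega}$. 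Without this (or an equivalent) argument, assertion $(\rm i)$ is only half proved. Two smaller inaccuracies: alternative $(\rm iii)$ does not come from \cite{BH16} (which enters Theorem \ref{thm-control} and Theorem \ref{thmA}, not Theorem \ref{thmB}) but from the failure of the spectral gap claim, via $\rL^{2}(\widetilde{\mathcal M}) \ominus \rL^{2}(\core(M)) \cong \rL^{2}(\core(M)) \otimes_{\core(B)} \mathcal L$ and \cite[Lemma A.2]{HU15b}; and the uniform convergence of the deformation must be established on the unit ball of $\core(P)' \cap p\,\core(M)^{\omega} p$, where the centralizing nets actually live, rather than on the unit ball of $\core(P)$ itself.
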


Note that the inclusion of continuous cores $\core(P) \subset \core(M)$ is the one associated with a fixed faithful normal conditional expectation $\rE_{P} : M \to P$. The proof of Theorem \ref{thmB} generalizes the one of \cite[Theorem 6.3]{Io12} and relies on a combination of Popa's deformation/rigidity theory and Connes--Tomita--Takesaki modular theory. 

Secondly, to understand the position of centralizing nets in $M = M_{1} \ast_{B} M_{2}$, we apply Theorem \ref{thmB} to $P = M$. We want to use Assertion $(\rm i)$ and exploit the key fact that $M \vee B^{\omega}$ (resp.\ $\core(M) \vee \core(B)^\omega$) can be regarded as an AFP von Neumann algebra. Assuming that $M_{1} \npreceq_{M_{1}} B$ and the inclusion $B \subset M_{2}$ is entirely nontrivial as in Theorem \ref{thmA}, we prove that Assertion $(\rm iii)$ in Theorem \ref{thmB} cannot occur. On the other hand, we improve Assertion $(\rm ii)$ in Theorem \ref{thmB} by showing that $\core(M_{1}) \preceq_{\core(M_{1})} \core(B)$. However, as we already pointed out, this does not contradict our assumption that $M_{1} \npreceq_{M_{1}} B$. Thus, in order to obtain the fullness property for AFP von Neumann algebras, we need to add extra assumptions.

When the amalgam $B$ is a type ${\rm I}$ von Neumann algebra, we obtain the following sharp rigidity result for the asymptotic centralizer $M_\omega$, generalizing \cite[Theorems 4.8 and 4.10]{Ue12}.

\begin{letterthm}\label{thmC}
Keep the same notation as above. Assume that $B$ is a type ${\rm I}$ von Neumann algebra, $M_1 \npreceq_{M_1} B$ and the inclusion $B \subset M_2$ is entirely nontrivial.

Then for every nonempty directed set $J$ and every cofinal ultrafilter $\omega$ on $J$, we have  
$$M_\omega = M' \cap \mathcal Z(B)^\omega \quad \quad \text{and} \quad \quad \core(M)_\omega = \core(M)' \cap \mathcal Z(\core(B))^\omega.$$

Furthermore, if $M$ has separable predual and $M' \cap \mathcal Z(B)^\omega = \C 1$ for some $\omega \in \beta(\N) \setminus \N$, that is, $M$ is full, then for every faithful state $\varphi \in M_{\ast}$ such that $\varphi \circ \rE = \varphi$ and every sequence $(t_{n})_{n}$ in $\R$, the following conditions are equivalent:
\begin{itemize}
\item $t_{n} \to 0$ with respect to $\tau(M)$.
\item There exists a sequence $v_{n} \in \mathcal U(B)$ such that $\Ad(v_{n}) \circ \sigma_{t_{n}}^{\varphi} \to \id_{M}$ in $\Aut(M)$.
\end{itemize}
\end{letterthm}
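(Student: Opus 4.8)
The plan is to prove this in two stages: first compute the asymptotic centralizers $M_\omega$ and $\core(M)_\omega$, and then, under the extra fullness hypothesis, deduce the description of Connes' topology $\tau(M)$ from that computation. Observe at the outset that the inclusions $M' \cap \mathcal Z(B)^\omega \subseteq M_\omega$ and $\core(M)' \cap \mathcal Z(\core(B))^\omega \subseteq \core(M)_\omega$ are automatic: since $\varphi = \varphi \circ \rE$, the subalgebra $\mathcal Z(B)^\omega$ (resp.\ $\mathcal Z(\core(B))^\omega$) embeds canonically into the ultrapower $M^\omega$ (resp.\ $\core(M)^\omega$), so any element supported there that commutes with $M$ (resp.\ $\core(M)$) lies in the asymptotic centralizer.

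For the reverse inclusions, I would apply Theorem \ref{thmB} with $P = M$ and rule out its alternatives $(\mathrm{ii})$ and $(\mathrm{iii})$. Alternative $(\mathrm{iii})$ cannot occur because, for every nonzero central projection $z$ of $\core(M)$, the algebra $\core(M)z$ is not amenable relative to $\core(B)$ inside $\core(M)$: by Theorem \ref{thmA}$(\mathrm{iv})$ no nonzero direct summand of $M$ is amenable relative to $M_1$ --- hence, a fortiori, relative to $B$ --- inside $M$, and this non-relative-amenability passes to the continuous cores. Alternative $(\mathrm{ii})$ cannot occur either: refining it as in the discussion preceding Theorem \ref{thmC}, the condition $\core(M) \preceq_{\core(M)} \core(M_i)$ for some $i \in \{1,2\}$ would entail $\core(M_1) \preceq_{\core(M_1)} \core(B)$, contradicting Proposition \ref{prop-intertwining-semifinite} because $B$ is of type $\mathrm{I}$. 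So alternative $(\mathrm{i})$ holds, giving $M_\omega \subseteq M \vee B^\omega$ and $\core(M)_\omega \subseteq \core(M) \vee \core(B)^\omega$. Now I would exploit that $M \vee B^\omega$ is itself an amalgamated free product over $B^\omega$, namely $M \vee B^\omega = (M_1 \vee B^\omega) \ast_{B^\omega} (M_2 \vee B^\omega)$ (and likewise for the cores). Since $M_1 \npreceq_{M_1} B$ passes to $M_1 \npreceq_{M_1 \vee B^\omega} B^\omega$ by a reduced-words computation, one can run the control theorem \ref{thm-control} inside this new AFP: combined with the hypothesis that $B \subseteq M_2$ is entirely nontrivial (to exclude the remaining alternatives of the control dichotomy) and with the non-relative-amenability from Theorem \ref{thmA}$(\mathrm{iv})$, it forces $M' \cap (M \vee B^\omega) \subseteq B^\omega$; since $B$ is of type $\mathrm{I}$ so that $B' \cap B^\omega = \mathcal Z(B)^\omega$, we conclude $M_\omega \subseteq M' \cap \mathcal Z(B)^\omega$. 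The identical argument inside $\core(M) \vee \core(B)^\omega$ gives $\core(M)_\omega \subseteq \core(M)' \cap \mathcal Z(\core(B))^\omega$, completing the first stage.

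For the topological statement, the implication ``there exist $v_n \in \mathcal U(B)$ with $\Ad(v_n) \circ \sigma_{t_n}^\varphi \to \id_M$'' $\Longrightarrow$ ``$t_n \to 0$ in $\tau(M)$'' is immediate: $\Ad(v_n)$ being inner, the class of $\sigma_{t_n}^\varphi$ in $\Out(M)$ equals that of $\Ad(v_n) \circ \sigma_{t_n}^\varphi$, which tends to $[\id_M]$; since $\tau(M)$ is by definition the weakest topology making $t \mapsto [\sigma_t^\varphi] \in \Out(M)$ continuous, $t_n \to 0$ in $\tau(M)$.

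The converse is where the work lies. Suppose $t_n \to 0$ in $\tau(M)$; as $M$ is full with separable predual, $\Out(M)$ is Polish and there exist $w_n \in \mathcal U(M)$ with $\Ad(w_n) \circ \sigma_{t_n}^\varphi \to \id_M$ in $\Aut(M)$. One first checks that $\varphi \circ \Ad(w_n) \to \varphi$ in $M_{\ast}$, so that $W := (w_n)_n$ is a unitary of $M^\omega$ with $\varphi^\omega \circ \Ad(W) = \varphi^\omega$; moreover, since $w_n \sigma_{t_n}^\varphi(x) w_n^* \to x$ for every fixed $x \in M$, one has $W^* x W = \Sigma(x)$ for all $x \in M$, where $\Sigma \in \Aut(M^\omega)$ is the automorphism induced by the sequence $(\sigma_{t_n}^\varphi)_n$. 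As $\sigma_{t_n}^\varphi$ restricts to $\sigma_{t_n}^{\varphi|_B}$ on $B$, this gives $W^* B W = \Sigma(B) \subseteq B^\omega$. The goal is then to show that $W$ lies, up to a scalar, in $\mathcal U(B^\omega)$: granting this, writing $W = (v_n)_n$ with $v_n \in \mathcal U(B)$ yields $\Ad(v_n) \circ \sigma_{t_n}^\varphi \to \id_M$ along $\omega$, and a routine ultrafilter argument promotes this to convergence of the full sequence. To obtain $W \in \mathcal U(B^\omega)$ --- equivalently, that $\rE^\omega(W)$ is a unitary --- I would again use the description $M \vee B^\omega = (M_1 \vee B^\omega) \ast_{B^\omega} (M_2 \vee B^\omega)$: the unitary $W$ almost normalizes $M$ (conjugating $M$ onto $\Sigma(M)$ and carrying $B$ into $B^\omega$), so the control of normalizers in Theorem \ref{thm-control}, together with $M_1 \npreceq_{M_1} B$, forces the ``off-$B^\omega$'' part of $W$ into the asymptotic centralizer $M_\omega$, which by the first stage equals $M' \cap \mathcal Z(B)^\omega = \C 1$ because $M$ is full. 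This final step --- essentially the approximate, ultraproduct counterpart of Theorem \ref{thmA}$(\mathrm{ii})$ --- is the main obstacle; the rest is soft manipulation of ultrapowers and of the definition of $\tau(M)$.
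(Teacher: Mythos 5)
Your computation of $M_\omega$ and $\core(M)_\omega$ follows the paper's route (Theorem \ref{thmB} with $P=M$, elimination of the alternatives via Theorem \ref{thmA} and Proposition \ref{prop-intertwining-semifinite}, then Theorem \ref{thm-control} inside the AFP $M\vee B^\omega = (M_1\vee B^\omega)\ast_{B^\omega}(M_2\vee B^\omega)$ and Ueda's entire-nontriviality argument), and it is essentially correct, with one caveat: the passage from $M_1\npreceq_{M_1}B$ to $M_1\npreceq_{M_1\vee B^\omega}B^\omega$ is \emph{not} ``a reduced-words computation''. When $M_1$ is not semifinite there is no net-of-unitaries criterion witnessing $M_1\npreceq_{M_1}B$ (this is precisely the warning issued by Theorem \ref{rem-intertwining}), so one cannot transfer non-embedding to the ultraproduct by testing on reduced words. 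This step is Theorem \ref{thm-intertwining-ultraproduct} $(\rm ii)$, whose proof uses the type ${\rm I}$ assumption on $B$ in an essential way (reduction to type ${\rm I}_n$ summands and maximal abelian subalgebras) and is one of the more delicate points of the argument; it must be invoked, not asserted.

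The genuine gap is in the hard direction of the statement on $\tau(M)$, which you yourself flag as ``the main obstacle''. Your unitary $W=(w_n)^\omega$ satisfies $W^*xW=(\sigma^\varphi_{t_n}(x))^\omega$ for $x\in M$, so it intertwines $M$ with the a priori different subalgebra $\Sigma(M)\subset M^\omega$; it does not normalize $M$, it need not lie in $M\vee B^\omega$, and it does not commute with $M$, so Theorem \ref{thm-control} does not apply to it and the phrase ``the off-$B^\omega$ part of $W$ lands in $M_\omega$'' has no precise meaning. The missing idea --- and the reason the theorem computes $\core(M)_\omega$ and not only $M_\omega$ --- is to absorb the modular flow into the continuous core: by \cite[Lemma XII.6.14]{Ta03}, $\Ad(w_n)\circ\sigma^\varphi_{t_n}\to\id_M$ implies $\Ad(\pi_\varphi(w_n)\lambda_\varphi(t_n))\to\id_{\core(M)}$, so that $(\pi_\varphi(w_n)\lambda_\varphi(t_n))^\omega$ is a genuinely centralizing element of $\core(M)^\omega$ and hence lies in $\core(M)'\cap\mathcal Z(\core(B))^\omega\subset\core(B)^\omega$ by the first part of the theorem. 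Multiplying by $\bigl((\lambda_\varphi(t_n))^\omega\bigr)^*$ and using that $\rL_\varphi(\R)\subset\core(B)$ is with expectation, one obtains $\pi_{\varphi^\omega}((w_n)^\omega)\in\core_{\varphi^\omega}(M^\omega)\cap\core(B)^\omega$, and the dual-action fixed-point argument from the proof of Theorem \ref{thmB} brings this down to $(w_n)^\omega\in B^\omega$, i.e.\ $\lim_{n\to\omega}\|w_n-\rE(w_n)\|_\varphi=0$ for every $\omega$, hence along the full sequence. Even then, producing actual unitaries $v_n\in\mathcal U(B)$ with $\|w_n-v_n\|_\varphi\to 0$ requires an additional step (invertible approximants as in \cite[Lemma 5.1]{Ma16} followed by polar decomposition); it is not an immediate consequence of $(w_n)^\omega$ being a unitary of $B^\omega$.
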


In particular, any such $M$ (resp.\ $\core(M)$) as in Theorem \ref{thmC} does not tensorially absorb the hyperfinite type ${\rm II_1}$ factor, that is, $M$ (resp.\ $\core(M)$) is not McDuff \cite{McD69, Co75a}.

We say that a nonsingular equivalence relation $\mathcal R$ (with countable classes)  defined on a standard measure space $(X, \mu)$ is {\em recurrent} if for every measurable subset $W \subset X$ with $\mu(W) > 0$ and for $\mu$-almost every $x \in W$, we have $|W \cap [x]_{\mathcal R}| = + \infty$. Observe that the nonsingular equivalence relation $\mathcal R$ is recurrent if and only if its von Neumann algebra $\rL(\mathcal R)$ has no nonzero type ${\rm I}$ direct summand (see \cite{FM75} for the construction of the von Neumann algebra $\rL(\mathcal R)$). Likewise, we say that a nonsingular action $\Gamma \curvearrowright (X, \mu)$ of a countable discrete group $\Gamma$ on a standard measure space $(X, \mu)$ is {\em recurrent} if for every measurable subset $W \subset X$ with $\mu(W) > 0$ and for $\mu$-almost every $x \in W$, we have $|\{g \in \Gamma \mid g \cdot x \in W\}| = +\infty$. For any nonsingular equivalence relation $\mathcal R$ (with countable classes) defined on a standard measure space $(X, \mu)$, we denote by $\core(\mathcal R)$ its {\em Maharam extension}. We then canonically have $\rL(\core(\mathcal R)) = \core(\rL(\mathcal R))$ (see e.g.\ \cite{HMV17}).

As a straightforward consequence of Theorem \ref{thmC}, we obtain the fullness property for factors $\rL(\mathcal R)$ arising from strongly ergodic AFP equivalence relations $\mathcal R$. In that case, we also infer that the $\tau$ invariant of the strongly ergodic equivalence relation $\mathcal R$ as defined in \cite{HMV17} coincides with Connes' $\tau$ invariant of the full factor $\rL(\mathcal R)$ \cite{Co74}. We refer to \cite[D\'efinition IV.6]{Ga99} for the notion of AFP in the framework of nonsingular equivalence relations.

\begin{lettercor}\label{corD} 
Let $\mathcal R$ be any nonsingular equivalence relation with countable classes defined on a diffuse standard measure space $(X, \mu)$ that splits as an AFP equivalence relation $\mathcal R = \mathcal R_{1} \ast_{\mathcal S} \mathcal R_{2}$. Assume that the following properties hold:
\begin{itemize}
\item [(P$1$)] $\mathcal S$ is of type ${\rm I}$.
\item [(P$2$)] $\mathcal R_{1}$ is recurrent. 
\item [(P$3$)] For $\mu$-almost every $x \in X$, we have $|[x]_{{\mathcal R_{2}}}|/|[x]_{\mathcal S}| \geq 2$.
\end{itemize}
Then for every nonempty directed set $J$ and every cofinal ultrafilter $\omega$ on $J$, we have  
$$\rL(\mathcal R)_\omega = \rL(\mathcal R)' \cap \mathcal Z(\rL(\mathcal S))^\omega \quad \quad \text{and} \quad \quad \rL(\core(\mathcal R))_\omega = \rL(\core(\mathcal R))' \cap \mathcal Z(\rL(\core(\mathcal S)))^\omega.$$
In particular, we have that $\mathcal R$ is strongly ergodic if and only if $\rL(\mathcal R)$ is a full factor.
If $\mathcal R$ is strongly ergodic, we have $\tau(\mathcal R) = \tau(\rL(\mathcal R))$.
\end{lettercor}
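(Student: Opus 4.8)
The plan is to reduce the statement entirely to Theorem \ref{thmC} applied to the AFP von Neumann algebra $M = \rL(\mathcal R_1) \ast_{\rL(\mathcal S)} \rL(\mathcal R_2)$. First I would recall the standard fact (see \cite{Ga99}, \cite{FM75}) that since $\mathcal R = \mathcal R_1 \ast_{\mathcal S} \mathcal R_2$ is an AFP equivalence relation, the Feldman--Moore construction is functorial enough to give a canonical identification $\rL(\mathcal R) = (\rL(\mathcal R_1), \rE_1) \ast_{\rL(\mathcal S)} (\rL(\mathcal R_2), \rE_2)$ as AFP von Neumann algebras, where $\rE_i : \rL(\mathcal R_i) \to \rL(\mathcal S)$ is the canonical faithful normal conditional expectation coming from the inclusion $\mathcal S \subset \mathcal R_i$. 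So set $B = \rL(\mathcal S)$, $M_i = \rL(\mathcal R_i)$, $M = \rL(\mathcal R)$. One then has to translate the three hypotheses (P1)--(P3) into the hypotheses of Theorem \ref{thmC}.

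Next I would verify these translations one at a time. For (P1): if $\mathcal S$ is of type ${\rm I}$ then $B = \rL(\mathcal S)$ is a type ${\rm I}$ von Neumann algebra, which is exactly the standing hypothesis of Theorem \ref{thmC}. For (P2): by the remark in the excerpt, $\mathcal R_1$ recurrent is equivalent to $\rL(\mathcal R_1) = M_1$ having no nonzero type ${\rm I}$ direct summand; since $B$ is type ${\rm I}$, I claim this forces $M_1 \npreceq_{M_1} B$. Indeed, if $M_1 \preceq_{M_1} B$ then (as $B$ is with expectation and of type ${\rm I}$) a standard local-intertwining argument, together with the equivalence-relation structure, produces a nonzero type ${\rm I}$ direct summand of $M_1$, contradicting recurrence of $\mathcal R_1$; this is essentially the mechanism used by Ueda in \cite{Ue12} and should be recorded as a short lemma or quoted from there. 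For (P3): the condition $|[x]_{\mathcal R_2}|/|[x]_{\mathcal S}| \geq 2$ for a.e.\ $x$ says precisely that the inclusion $\mathcal S \subset \mathcal R_2$ has ``index at least $2$ everywhere'', which on the von Neumann algebra side translates into the inclusion $B \subset M_2$ being entirely nontrivial — no nonzero direct summand of $B$ coincides with a direct summand of $M_2$, since over every nonzero piece the larger algebra is strictly bigger. With these three translations in hand, Theorem \ref{thmC} applies verbatim and yields
$$M_\omega = M' \cap \mathcal Z(B)^\omega, \qquad \core(M)_\omega = \core(M)' \cap \mathcal Z(\core(B))^\omega.$$
Using $\core(M) = \core(\rL(\mathcal R)) = \rL(\core(\mathcal R))$ and $\core(B) = \rL(\core(\mathcal S))$ (via the Maharam extension identification, see \cite{HMV17}), this is exactly the displayed conclusion of the corollary.

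Finally, for the last two sentences: $\mathcal R$ is strongly ergodic precisely when $\rL(\mathcal R)_\omega \cap \rL(\mathcal R)' = \C 1$, equivalently (since $\rL(\mathcal R)' \cap \rL(\mathcal R)_\omega \subset \rL(\mathcal R)' \cap \mathcal Z(\rL(\mathcal S))^\omega$ and the reverse containment is the content of the first display) when $\rL(\mathcal R)' \cap \mathcal Z(\rL(\mathcal S))^\omega = \C 1$; but by the first display this equals $\rL(\mathcal R)_\omega$, so strong ergodicity of $\mathcal R$ is equivalent to $\rL(\mathcal R)_\omega = \C1$, i.e.\ to $\rL(\mathcal R)$ being a full factor (here one also notes that $M$ is a factor by Theorem \ref{thmA}(i), since $\mathcal Z(M) = \mathcal Z(M_1) \cap \mathcal Z(M_2) \cap \mathcal Z(B)$ is forced to be trivial by ergodicity of $\mathcal R$). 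When $\mathcal R$ is strongly ergodic, the equality $\tau(\mathcal R) = \tau(\rL(\mathcal R))$ follows by comparing the two definitions: $\tau(\mathcal R)$ as defined in \cite{HMV17} is the weakest topology making a certain orbit-equivalence-theoretic homomorphism $\R \to \Out(\mathcal R)$ continuous, and the characterization of $\tau(M)$-convergence given in the second half of Theorem \ref{thmC} (convergence $t_n \to 0$ iff there exist $v_n \in \mathcal U(B)$ with $\Ad(v_n) \circ \sigma^\varphi_{t_n} \to \id_M$) matches, under the Feldman--Moore dictionary, the defining condition for $t_n \to 0$ in $\tau(\mathcal R)$; I would cite \cite{HMV17} for this last identification. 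The main obstacle is the (P2) $\Rightarrow$ $M_1 \npreceq_{M_1} B$ step, where one must be careful that the absence of a type ${\rm I}$ direct summand really does obstruct intertwining into the type ${\rm I}$ amalgam; everything else is bookkeeping with the Feldman--Moore and Maharam constructions.
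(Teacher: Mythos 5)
Your proposal is correct and follows essentially the same route as the paper: translate (P1)--(P3) into the hypotheses of Theorem \ref{thmC} (namely $\rL(\mathcal S)$ of type ${\rm I}$, $\rL(\mathcal R_1) \npreceq_{\rL(\mathcal R_1)} \rL(\mathcal S)$ because $\rL(\mathcal R_1)$ has no type ${\rm I}$ direct summand, and $\rL(\mathcal S) \subset \rL(\mathcal R_2)$ entirely nontrivial), apply that theorem, and read off the $\tau$ identification from its second half together with \cite[Definition 2.6]{HMV17}. One small caveat: your treatment of the ``strongly ergodic iff full'' clause is circular as written, since $\rL(\mathcal R)_\omega \subset \rL(\mathcal R)'$ automatically, so ``$\rL(\mathcal R)_\omega \cap \rL(\mathcal R)' = \C 1$'' is just fullness rather than the definition of strong ergodicity; the correct bridge is that strong ergodicity means $\rL(\mathcal R)' \cap \rL^\infty(X)^\omega = \C 1$, and one sandwiches $\rL(\mathcal R)' \cap \mathcal Z(\rL(\mathcal S))^\omega \subset \rL(\mathcal R)' \cap \rL^\infty(X)^\omega \subset \rL(\mathcal R)_\omega$ (the last inclusion because $\rL^\infty(X)$ lies in the centralizer of $\varphi$) and invokes the displayed identity to close the loop.
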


In particular, any such $\mathcal R$ (resp.\ $\core(\mathcal R)$) as in Corollary \ref{corD} does not absorb (by direct product) the hyperfinite type ${\rm II_1}$ equivalence relation, that is, $\mathcal R$ (resp.\ $\core(\mathcal R)$) is not stable in the sense of \cite{JS85}. Let us point out that Corollary \ref{corD} generalizes \cite[Corollary 10]{Ue00}. We refer to \cite[Theorem C]{HI15b} and \cite[Theorem E]{HMV17} for related results regarding nonsingular equivalence relations arising from strongly ergodic actions of hyperbolic groups.

When the amalgam $B$ is an arbitrary von Neumann algebra with separable predual and $M = B \rtimes \Gamma$ is the crossed product von Neumann algebra arising from an arbitrary properly outer action $\Gamma \curvearrowright B$ of a free product group $\Gamma = \Gamma_{1} \ast \Gamma_{2}$ where $|\Gamma_{1}| = +\infty$ and $|\Gamma_{2}| \geq 2$, we obtain the following new rigidity result for the asymptotic centralizer $M_{\omega}$.

\begin{letterthm}\label{thmE}
Let $\Gamma_1$ and $\Gamma_{2}$ be any countable discrete groups such that $|\Gamma_{1}| = +\infty$ and $|\Gamma_{2}| \geq 2$. Put $\Gamma = \Gamma_{1} \ast \Gamma_{2}$. Let $B$ be any von Neumann algebra with separable predual and $\Gamma \curvearrowright B$ any properly outer action such that $\Gamma_{1} \curvearrowright \mathcal Z(B)$ is recurrent. Put $M = B \rtimes \Gamma$.

\begin{itemize}
\item [$(\rm i)$] Assume that $\Gamma_{2}$ is finite or $\Gamma_{2} \curvearrowright \mathcal Z(B)$ is recurrent. 
Then for every nonempty directed set $J$ and every cofinal ultrafilter $\omega$ on $J$, we have  
$$M_\omega \subset M' \cap B^\omega.$$

\item [$(\rm ii)$] Assume that $\Gamma_{1} \curvearrowright \core(B)$ is properly outer and $\Gamma_{1} \curvearrowright \mathcal Z(\core(B))$ is recurrent.
Then for every nonempty directed set $J$ and every cofinal ultrafilter $\omega$ on $J$, we have  
$$M_\omega \subset M' \cap B^\omega  \quad \quad \text{and} \quad \quad \core(M)_\omega \subset \core(M)' \cap \core(B)^\omega.$$
Furthermore, if $M' \cap B^\omega = \C 1$ for some $\omega \in \beta(\N) \setminus \N$, that is, $M$ is full, then for every faithful state $\varphi \in M_{\ast}$ such that $\varphi \circ \rE = \varphi$ and every sequence $(t_{n})_{n}$ in $\R$, the following conditions are equivalent:

\begin{itemize}

\item [$\bullet$] $t_{n} \to 0$ with respect to $\tau(M)$.

\item [$\bullet$] There exists a sequence $v_{n} \in \mathcal U(B)$ such that $\Ad(v_{n}) \circ \sigma_{t_{n}}^{\varphi} \to \id_{M}$ in $\Aut(M)$.

\end{itemize}

\end{itemize}
\end{letterthm}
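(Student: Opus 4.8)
The plan is to realize $M$ as an amalgamated free product and then run Theorems~\ref{thmA} and~\ref{thmB}. Put $M_{1} = B \rtimes \Gamma_{1}$ and $M_{2} = B \rtimes \Gamma_{2}$, each equipped with its canonical faithful normal conditional expectation $\rE_{i} \colon M_{i} \to B$; since $\Gamma = \Gamma_{1} \ast \Gamma_{2}$, the standard identification yields $(M, \rE) = (M_{1}, \rE_{1}) \ast_{B} (M_{2}, \rE_{2})$. Fixing a faithful state $\varphi \in M_{\ast}$ with $\varphi \circ \rE = \varphi$, the modular flow $\sigma^{\varphi}$ globally preserves $B$ and each $M_{i}$, and, because the implementing unitaries $u_{g}$ ($g \in \Gamma$) normalize $B$ inside the continuous core, one has $\core(M) = \core(B) \rtimes \Gamma$, which decomposes as the amalgamated free product $\core(M) = \core(M_{1}) \ast_{\core(B)} \core(M_{2})$. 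The very same Fourier expansions show that, for every directed set $J$ and every cofinal ultrafilter $\om$ on $J$, $M \vee B^{\om} = B^{\om} \rtimes \Gamma$ and $\core(M) \vee \core(B)^{\om} = \core(B)^{\om} \rtimes \Gamma$, again amalgamated free products over $B^{\om}$ and $\core(B)^{\om}$ respectively. I next record the two operator-algebraic facts furnished by the hypotheses: since $\Gamma_{1} \curvearrowright \mathcal Z(B)$ is recurrent and $|\Gamma_{1}| = +\infty$, we have $M_{1} \npreceq_{M_{1}} B$; and since $\Gamma \curvearrowright B$ is properly outer with $|\Gamma_{2}| \geq 2$, the inclusion $B \subset M_{2}$ is entirely nontrivial and with expectation. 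In particular Theorem~\ref{thmA} applies, so $M$ is a factor.

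Now apply Theorem~\ref{thmB} to $P = M$. The goal is to reach alternative $(\rm i)$, i.e.\ $M_{\om} \subset M \vee B^{\om}$ and $\core(M)_{\om} \subset \core(M) \vee \core(B)^{\om}$, by excluding alternatives $(\rm ii)$ and $(\rm iii)$. For $(\rm ii)$: if $\core(M) \preceq_{\core(M)} \core(M_{i})$ for some $i$, then the control of intertwining in the amalgamated free product $\core(M) = \core(M_{1}) \ast_{\core(B)} \core(M_{2})$ (Theorem~\ref{thm-control}) forces a corner of $\core(M_{3-i}) = \core(B) \rtimes \Gamma_{3-i}$ to embed with expectation into $\core(B)$ inside $\core(M)$; this contradicts the relevant recurrence hypothesis on $\Gamma_{3-i} \curvearrowright \mathcal Z(B)$ (and on $\mathcal Z(\core(B))$ in case $(\rm ii)$), or, when $\Gamma_{2}$ is finite, the fact that $[\Gamma : \Gamma_{2}] = +\infty$ because $\Gamma_{1}$ is infinite. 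For $(\rm iii)$: a nonzero central projection $z$ of $\core(M)$ with $\core(M)z$ amenable relative to $\core(B)$ inside $\core(M)$ would, since $\core(B) \subset \core(M_{1})$, make $\core(M)z$ amenable relative to $\core(M_{1})$ inside $\core(M) = \core(M_{1}) \ast_{\core(B)} \core(M_{2})$, contradicting assertion $(\rm iv)$ of Theorem~\ref{thmA} applied to this core decomposition — whose hypotheses ($\core(M_{1}) \npreceq_{\core(M_{1})} \core(B)$ and $\core(B) \subset \core(M_{2})$ entirely nontrivial) are exactly what the recurrence and proper outerness assumptions guarantee. Hence alternative $(\rm i)$ holds.

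To finish, I upgrade $M_{\om} \subset M \vee B^{\om} = B^{\om} \rtimes \Gamma$ to $M_{\om} \subset M' \cap B^{\om}$. Since $M_{\om}$ always commutes with $M$, we have $M_{\om} \subset M' \cap (B^{\om} \rtimes \Gamma)$; writing $x = \sum_{g \in \Gamma} x_{g} u_{g}$ for an element of the latter, commutation with $\mathcal U(B)$ gives $b x_{g} = x_{g} \sigma_{g}(b)$ for all $b \in B$ and all $g$, and commutation with each $u_{h}$ gives the equivariance $\sigma_{h}(x_{h^{-1}gh}) = x_{g}$. The first relation persists for all $b \in B^{\om}$ with $\sigma_{g}$ replaced by the ultrapower automorphism, so $x_{g}$ is a $\sigma_{g}^{\om}$-twisted central element of $B^{\om}$; the proper outerness of $\Gamma \curvearrowright B$, together with the recurrence hypotheses which ensure that this proper outerness survives the passage to the ultrapower, yields $x_{g} = 0$ for all $g \neq e$, whence $x = x_{e} \in B^{\om}$ and, as $x$ commutes with $M$, $M_{\om} \subset M' \cap B^{\om}$. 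Running the identical computation inside $\core(M) \vee \core(B)^{\om} = \core(B)^{\om} \rtimes \Gamma$, and using the extra assumption in $(\rm ii)$ that $\Gamma_{1} \curvearrowright \core(B)$ is properly outer, gives $\core(M)_{\om} \subset \core(M)' \cap \core(B)^{\om}$; for part $(\rm i)$ only the $M$-level statement is claimed and only the weaker hypotheses are used.

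For the final equivalence, assume $M' \cap B^{\om} = \C 1$ for some $\om \in \beta(\N) \setminus \N$, so that $M$ is full, $\Out(M)$ is a Hausdorff topological group, and $\tau(M)$ is by definition the pull-back topology along $t \mapsto [\sigma_{t}^{\varphi}] \in \Out(M)$; hence $t_{n} \to 0$ in $\tau(M)$ is equivalent to the existence of $w_{n} \in \mathcal U(M)$ with $\Ad(w_{n}) \circ \sigma_{t_{n}}^{\varphi} \to \id_{M}$ in $\Aut(M)$, and what remains is to show $w_{n}$ may be taken in $\mathcal U(B)$. Given such $(w_{n})$ in $\mathcal U(M)$, regard it as a net that is centralizing relative to the modular flow: $\sigma_{t_{n}}^{\varphi}$ fixes $B$ globally (acting there as $\sigma^{\varphi|_{B}}$) and moves each $u_{g}$ only by a Connes cocycle lying in $B$, so the same spectral-gap analysis as above, carried out inside $M \vee B^{\om} = B^{\om} \rtimes \Gamma$, shows that $(w_{n})$ asymptotically lies in $B^{\om}$; a standard local-triviality and reindexing argument then produces $v_{n} \in \mathcal U(B)$ with $\Ad(v_{n}) \circ \sigma_{t_{n}}^{\varphi} \to \id_{M}$. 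The main obstacle throughout is the dictionary translating the dynamical recurrence hypotheses into the precise operator-algebraic statements that drive the argument — namely $M_{1} \npreceq_{M_{1}} B$ and, for part $(\rm ii)$, $\core(M_{1}) \npreceq_{\core(M_{1})} \core(B)$, the exclusion of relative amenability needed to kill alternative $(\rm iii)$ of Theorem~\ref{thmB}, and the proper outerness of the amplified modular automorphisms on the ultrapowers $B^{\om}$ and $\core(B)^{\om}$ used in the Fourier-coefficient computation; once these are secured, the rest is a routine application of Theorems~\ref{thmA} and~\ref{thmB} together with crossed-product bookkeeping.
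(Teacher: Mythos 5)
Your set-up (realizing $M$ as the AFP $M_1\ast_B M_2$ with $M_i=B\rtimes\Gamma_i$ and feeding it into Theorem \ref{thmB}) matches the paper, but the final descent from $M_{\omega}\subset M\vee B^{\omega}$ to $M_{\omega}\subset M'\cap B^{\omega}$ contains a genuine gap. Writing $x=\sum_g x_g u_g$ and extracting $b\,x_g=x_g\,\sigma_g(b)$ for all $b\in B$ is fine, but to conclude $x_g=0$ for $g\neq e$ you need the vanishing, for each fixed $g\neq e$, of the twisted centralizer $\{a\in B^{\omega}\mid ba=a\sigma_g(b)\ \forall b\in B\}$. You attribute this to ``proper outerness surviving the passage to the ultrapower,'' but proper outerness of $\Gamma\curvearrowright B$ does not give this: if $\sigma_g$ is outer yet approximately inner, say $\sigma_g=\lim_n\Ad(u_n)$, then $(u_n)^{\omega}$ is a nonzero (even unitary) twisted intertwiner in $B^{\omega}$. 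Worse, for $g\in\Gamma_2$ no recurrence hypothesis is available at all (in part $(\rm i)$ the group $\Gamma_2$ may be finite, and a finite group action is never recurrent), so nothing could rescue the pointwise computation for those $g$. The paper avoids this entirely: it first proves $M_1\npreceq_{Q_1}B^{\omega}$ via Theorem \ref{thm-intertwining-ultraproduct}$(\rm i)$ (using regularity of $B\subset M_1$ together with $M_1\npreceq_{M_1}B$, i.e.\ recurrence of $\Gamma_1\curvearrowright\mathcal Z(B)$), then applies the normalizer control Theorem \ref{thm-control} inside $Q=Q_1\ast_{B^{\omega}}Q_2$ to get $M_{\omega}\subset M_1'\cap Q\subset Q_1$, and only then descends to $B^{\omega}$ using the entire nontriviality of $B^{\omega}\subset Q_2$ (Ueda's argument), never invoking any outerness of individual elements of $\Gamma_2$ on $B^{\omega}$.

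A secondary but real issue: your exclusion of alternative $(\rm ii)$ of Theorem \ref{thmB} is asserted rather than proved. Theorem \ref{thm-control} controls normalizers; it does not by itself yield the claim that a corner of $\core(M_{3-i})$ embeds into $\core(B)$. The paper's Corollary \ref{cor-thmB} needs Proposition \ref{lem-intertwining-relative-amenability} plus a two-case analysis, and when $\Gamma_2$ is finite the contradiction comes from $M_2\lessdot_{M_2}B$ and transitivity of relative amenability (Corollary \ref{cor-transitivity}) played against Theorem \ref{thmA}$(\rm iv)$ --- not from $[\Gamma:\Gamma_2]=+\infty$, which by itself yields nothing here.
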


We can apply Theorem \ref{thmE} to obtain new examples of full factors of type ${\rm III}$ for which we can compute Connes' $\tau$ invariant.

\begin{lettercor}\label{corF}
Let $\Gamma_1$ and $\Gamma_{2}$ be any countable discrete groups such that $|\Gamma_{1}| = +\infty$ and $|\Gamma_{2}| \geq 2$. Put $\Gamma = \Gamma_{1} \ast \Gamma_{2}$. Let $B$ be any full factor with separable predual and $\Gamma \curvearrowright B$ any outer action. Then $M = B \rtimes \Gamma$ is a full factor.

If moreover $B$ is of type ${\rm III_{1}}$ and $\Gamma_{1} \curvearrowright \core(B)$ is outer, then $M$ is of type ${\rm III_{1}}$ and $\tau(M) = \tau(B)$.
\end{lettercor}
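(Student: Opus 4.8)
Below is the plan I would follow.

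\medskip

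\textbf{Setup and factoriality.} The plan is to realize $M = B \rtimes \Gamma$ as the AFP von Neumann algebra $M = (B \rtimes \Gamma_1) \ast_B (B \rtimes \Gamma_2)$ and to run Theorems \ref{thmA} and \ref{thmE} with $M_i = B \rtimes \Gamma_i$ and $\rE \colon M \to B$ the canonical conditional expectation. First I would check the hypotheses: $M$ has separable predual ($B$ does and $\Gamma$ is countable); since $\Gamma \curvearrowright B$ is outer and $B$ is a factor, each $M_i$ is a factor with $B \subsetneq M_i$ and $B' \cap M = \C 1$; the outer action of the \emph{infinite} group $\Gamma_1$ gives $M_1 \npreceq_{M_1} B$; the inclusion $B \subset M_2$ is entirely nontrivial (two factors with $B \neq M_2$); and $\mathcal Z(B) = \C 1$ makes $\Gamma_1 \curvearrowright \mathcal Z(B)$ (vacuously) recurrent as $\Gamma_1$ is infinite, while $\Gamma_2$ is finite or acts recurrently on $\mathcal Z(B)$. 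Factoriality of $M$ then follows from Theorem \ref{thmA}(i): $\mathcal Z(M) = M' \cap B \subseteq B' \cap B = \mathcal Z(B) = \C 1$.

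\textbf{Fullness.} Theorem \ref{thmE}(i) gives $M_\omega \subseteq M' \cap B^\omega$ for every cofinal ultrafilter $\omega$, and I would argue that any $x \in M_\omega$ is a scalar: being in $B^\omega$, $x$ is represented by a bounded net $(x_j)$ in $B$; being in $M_\omega$, this net is centralizing for $M$; restricting functionals along $\rE$ (a normal $B$-bimodular norm-one projection, with $\|\eta \circ \rE\|_{M_\ast} = \|\eta\|_{B_\ast}$) then shows $(x_j)$ is centralizing for $B$, i.e.\ $x \in B_\omega$. Since $B$ is a full factor, $B_\omega = \C 1$; hence $M_\omega = \C 1$ for all cofinal $\omega$, so $M$ is a full factor, and in particular $M' \cap B^\omega = \C 1$, which is the fullness hypothesis needed for Theorem \ref{thmE}(ii).

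\textbf{Type ${\rm III_1}$.} Assume now $B$ is of type ${\rm III_1}$ and $\Gamma_1 \curvearrowright \core(B)$ is outer. Then $\core(B)$ is a type ${\rm II_\infty}$ factor and $\core(M_1) = \core(B) \rtimes \Gamma_1$ for the canonical extension of the action; as $\Gamma_1$ is infinite and acts outerly on the factor $\core(B)$, we get $\core(M_1) \npreceq_{\core(M_1)} \core(B)$, so the extra hypothesis of Theorem \ref{thmA}(v) holds and $\mathcal Z(\core(M)) = \core(M)' \cap \core(B) \subseteq \mathcal Z(\core(B)) = \C 1$. Thus the flow of weights of $M$ is trivial, i.e.\ $M$ is of type ${\rm III_1}$.

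\textbf{Computation of $\tau(M)$, and the main obstacle.} Both $M$ and $B$ being full factors, $\tau(M)$ and $\tau(B)$ are defined; fix a faithful normal state $\varphi_B \in B_\ast$ and put $\varphi = \varphi_B \circ \rE$, so $\sigma^\varphi_t(B) = B$ and $\sigma^\varphi_t|_B = \sigma^{\varphi_B}_t$, and let $(t_n)_n$ be a sequence in $\R$. The implication ``$t_n \to 0$ in $\tau(M)$'' $\Rightarrow$ ``$t_n \to 0$ in $\tau(B)$'' is the easy one: Theorem \ref{thmE}(ii) (whose hypotheses $\Gamma_1 \curvearrowright \core(B)$ properly outer and $\Gamma_1 \curvearrowright \mathcal Z(\core(B)) = \C 1$ recurrent are met) supplies $v_n \in \mathcal U(B)$ with $\Ad(v_n) \circ \sigma^\varphi_{t_n} \to \id_M$; since $v_n \in B$ this automorphism preserves $B$ and restricts, by composing with $\rE$ and using $\rE \circ \sigma^\varphi_t = \sigma^{\varphi_B}_t \circ \rE$, to $\Ad(v_n) \circ \sigma^{\varphi_B}_{t_n} \to \id_B$. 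The converse is the hard part and is where I expect the real difficulty to lie. Given $w_n \in \mathcal U(B)$ with $\Ad(w_n) \circ \sigma^{\varphi_B}_{t_n} \to \id_B$, the automorphisms $\theta_n := \Ad(w_n) \circ \sigma^\varphi_{t_n}$ preserve $B$ with $\theta_n|_B \to \id_B$, so after passing to a subsequence $\theta_n \to \beta$ in $\Aut(M)$ with $\beta$ fixing $B$ pointwise; by the cocycle relation together with $B' \cap M = \C 1$, $\beta = \beta_v \colon u_g \mapsto v(g) u_g$ for a character $v \in \widehat\Gamma$, and since $\beta_v$ is inner iff $v = 1$, one must show every such $v$ is trivial, i.e.\ $\theta_n \to \id_M$. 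Writing $\sigma^\varphi_t(u_g) = u_g\, d_t(g)$ with $d_t(g) \in \mathcal U(B)$ the relevant Connes cocycle, one has $\theta_n(u_g) = u_g\, z_n(g)$ with $z_n(g) = \alpha_g^{-1}(w_n)\, d_{t_n}(g)\, w_n^\ast$; conjugating $\Ad(w_n) \circ \sigma^{\varphi_B}_{t_n} \to \id_B$ by $\alpha_g = \Ad(u_g)|_B$ and comparing yields $\Ad(z_n(g)) \to \id_B$, so fullness of $B$ already forces $z_n(g)$ to converge $\ast$-strongly to a scalar in $\mathbf{T}$. Promoting this to $z_n(g) \to 1$ — equivalently, ruling out a nontrivial limiting character of $\Gamma$ on the crossed-product generators $u_g$ — is the crux: I expect it to require the full strength of fullness of $B$ (not merely $\mathcal Z(B) = \C 1$), the free-product structure of $\Gamma$, and the hypothesis $\Gamma_1 \curvearrowright \core(B)$ outer (plausibly via the core statement $\core(M)_\omega \subseteq \core(M)' \cap \core(B)^\omega$ of Theorem \ref{thmE}(ii) and the action of the canonical extension on $B' \cap \core(B)$). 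Once $\theta_n \to \id_M$, combining the two directions gives $\tau(M) = \tau(B)$; and finally $\tau(M) = \tau(B)$ together with $M$ of type ${\rm III_1}$ yields the stated conclusion.
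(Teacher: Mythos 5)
Your treatment of factoriality, fullness and the type ${\rm III_1}$ property is correct and coincides with the paper's own proof: the paper likewise realizes $M=(B\rtimes\Gamma_1)\ast_B(B\rtimes\Gamma_2)$, deduces $M_\omega\subset M'\cap B^\omega$ from Theorem \ref{thmE} $(\rm i)$ (and then simply quotes $B'\cap B^\omega=\C 1$ for full factors from Ando--Haagerup, whereas you rederive the weaker fact you need by restricting centralizing nets to $B$ along $\rE$ --- both are fine), and it obtains $\mathcal Z(\core(M))=\C 1$ from $\core(M_1)\npreceq_{\core(M_1)}\core(B)$, which it gets from Theorem \ref{thm-crossed-products} exactly as you do; it then concludes via Theorem \ref{thm-control} rather than Theorem \ref{thmA} $(\rm v)$, an immaterial difference.

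The final assertion $\tau(M)=\tau(B)$ is where your proposal is incomplete, and you say so yourself. Your reduction through Theorem \ref{thmE} $(\rm ii)$ is the right one, and the restriction direction ($\tau(M)$-convergence implies $\tau(B)$-convergence) is correctly handled. But the converse --- equivalently, that the unitaries $z_n(g)=\alpha_{g^{-1}}(w_n)\,d_{t_n}(g)\,w_n^\ast$, which you correctly show converge modulo unimodular scalars $\mu_n(g)$ to $1$ by fullness of $B$, actually satisfy $\mu_n(g)\to 1$ for every $g\in\Gamma$ --- is only conjectured, with a list of ingredients you ``expect'' to need. Without this step you only obtain that $\tau(M)$ is finer than $\tau(B)$; note that even the constant sequences $t_n=t$ with $t\in\rT(B)$ already require ruling out a nontrivial character, i.e.\ the a priori strict inclusion $\rT(M)\subset\rT(B)$ must be shown to be an equality. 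So, as written, the proposal does not prove $\tau(M)=\tau(B)$, and this is the one genuine gap. In fairness, the paper's entire argument for this step is the sentence ``By Theorem \ref{thmE} $(\rm ii)$, we obtain $\tau(M)=\tau(B)$'', and Theorem \ref{thmE} $(\rm ii)$ as stated only characterizes $\tau(M)$-convergence by the existence of $v_n\in\mathcal U(B)$ with $\Ad(v_n)\circ\sigma_{t_n}^{\varphi}\to\id_M$ in $\Aut(M)$; the passage from $\Ad(v_n)\circ\sigma_{t_n}^{\varphi}|_B\to\id_B$ to convergence on all of $M$ is precisely the point you flag and is not spelled out there either. You have therefore not overlooked an argument that the paper makes explicit, but you have also not supplied one, and one is needed.
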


Let us point out that Corollary \ref{corF} applies in particular to free groups $\F_{n}$ where $n \geq 2$ or $n = +\infty$. Corollary \ref{corF} should be compared to \cite[Theorem B]{Ma16} where it is shown that for any full factor $B$, any discrete group $\Gamma$ and any outer action $\Gamma \curvearrowright B$ such that the image of $\Gamma$ is discrete in $\Out(B)$, the crossed product factor $B \rtimes \Gamma$ is full. 

\subsection*{Acknowledgments}
Y.\ Isono is grateful to K.\ Hasegawa for insightful discussions regarding the proof of Theorem \ref{thm-crossed-products}.

\tableofcontents

\section{Preliminaries}

\subsection*{Background on $\sigma$-finite von Neumann algebras}

For any von Neumann algebra $M$, we denote by $\mathcal Z(M)$ its centre, by $\mathcal U(M)$ its group of unitaries, by $\Ball(M)$ its unit ball with respect to the uniform norm $\|\cdot\|_\infty$ and by $(M, \rL^2(M), J^{M}, \rL^2(M)^+)$ its standard form \cite{Ha73}. When no confusion is possible, we simply write $J = J^{M}$. 

Let $M$ be any $\sigma$-finite von Neumann algebra and $\varphi \in M_\ast$ any faithful state. We  write $\|x\|_\varphi = \varphi(x^* x)^{1/2}$ for every $x \in M$. Recall that on $\Ball(M)$, the topology given by $\|\cdot\|_\varphi$ 
coincides with the $\sigma$-strong topology. We denote by $\xi_\varphi = \varphi^{1/2} \in \rL^2(M)^+$ the unique element such that $\varphi = \langle \, \cdot \, \xi_{\varphi}, \xi_{\varphi}\rangle$. Conversely, for every $\xi \in \rL^{2}(M)^{+}$, we denote by $\varphi_{\xi} = \langle \, \cdot \, \xi, \xi\rangle \in M_{\ast}$ the corresponding positive form. The mapping $M \to \rL^2(M) : x \mapsto x \xi_\varphi$ defines an embedding with dense image such that $\|x\|_\varphi = \|x \xi_\varphi\|$ for all $x \in M$. 

\subsection*{Jones basic construction}

Let $B \subset M$ be any inclusion of $\sigma$-finite von Neumann algebras with faithful normal conditional expectation $\rE_B : M \to B$. We regard the standard form of $B$ as a substandard form of the standard form of $M$ via the isometric embedding $\rL^{2}(B)^{+} \to \rL^{2}(M)^{+} : \xi \mapsto (\varphi_{\xi} \circ \rE_{B})^{1/2}$.  
Fix a faithful state $\varphi_B \in B_{\ast}$ and put $\varphi=\varphi_B \circ \rE_B \in M_{\ast}$. The Jones projection $e_{B} : \rL^{2}(M) \to \rL^{2}(B)$ satisfies the relation $e_{B}(x \xi_{\varphi}) = \rE_{B}(x)\xi_{\varphi}$ for every $x \in M$. Moreover, $e_{B}$ does not depend on the choice of the faithful state $\varphi_B \in B_{\ast}$ (see e.g.\ \cite[Proposition A.2]{HI15a}). The Jones {\em basic construction} for the inclusion $B \subset M$ is defined by $\langle M, B\rangle = (JBJ)' \cap \mathbf B(\rL^{2}(M))$. Observe that $\langle M, B\rangle$ does not depend on the choice of the faithful normal conditional expectation $\rE_{B} : M \to B$. We moreover have $\langle M, B\rangle = (M \cup \{e_{B}\})\dpr$.

\subsection*{Continuous core decomposition}

Let $M$ be any $\sigma$-finite von Neumann algebra and $\varphi \in M_\ast$ any faithful state. We denote by $\sigma^\varphi$ the modular automorphism group of the state $\varphi$.  The centralizer $M^\varphi$ of the state $\varphi$ is by definition the fixed point algebra of $(M, \sigma^\varphi)$.  The {\em continuous core} of $M$ with respect to $\varphi$, denoted by $\core_\varphi(M)$, is the crossed product von Neumann algebra $M \rtimes_{\sigma^\varphi} \R$.  The natural inclusion $\pi_\varphi: M \to \core_\varphi(M)$ and the unitary representation $\lambda_\varphi: \R \to \core_\varphi(M)$ satisfy the {\em covariance} relation
$$
\forall x \in M, \forall t \in \R, \quad  \lambda_\varphi(t) \pi_\varphi(x) \lambda_\varphi(t)^*
  =
  \pi_\varphi(\sigma^\varphi_t(x)).
$$
Put $\rL_\varphi (\R) = \lambda_\varphi(\R)\dpr$. There is a unique faithful normal conditional expectation $\rE_{\rL_\varphi (\R)}: \core_{\varphi}(M) \to \rL_\varphi(\R)$ satisfying $\rE_{\rL_\varphi (\R)}(\pi_\varphi(x) \lambda_\varphi(t)) = \varphi(x) \lambda_\varphi(t)$. The faithful normal semifinite weight defined by $f \mapsto \int_{\R} \exp(-s)f(s) \, {\rm d}s$ on $\rL^\infty(\R)$ gives rise to a faithful normal semifinite weight $\Tr_\varphi$ on $\rL_\varphi(\R)$ via the Fourier transform. The formula $\Tr_\varphi = \Tr_\varphi \circ \rE_{\rL_\varphi (\R)}$ extends it to a faithful normal semifinite trace on $\core_\varphi(M)$.

Because of Connes' Radon--Nikodym cocycle theorem \cite[Th\'eor\`eme 1.2.1]{Co72} (see also \cite[Theorem VIII.3.3]{Ta03}), the semifinite von Neumann algebra $\core_\varphi(M)$ together with its trace $\Tr_\varphi$ does not depend on the choice of $\varphi$ in the following precise sense. If $\psi$ is another faithful normal state on $M$, there is a canonical surjective $\ast$-isomorphism
$\Pi_{\varphi,\psi} : \core_\psi(M) \to \core_{\varphi}(M)$ such that $\Pi_{\varphi,\psi} \circ \pi_\psi = \pi_\varphi$ and $\Tr_\varphi \circ \Pi_{\varphi,\psi} = \Tr_\psi$. Note however that $\Pi_{\varphi,\psi}$ does not map the subalgebra $\rL_\psi(\R) \subset \core_\psi(M)$ onto the subalgebra $\rL_\varphi(\R) \subset \core_\varphi(M)$ (and thus we use the symbol $\rL_\varphi(\R)$ instead of the usual $\rL(\R)$).

\subsection*{Ultraproduct von Neumann algebras}

Let $M$ be any $\sigma$-finite von Neumann algebra. Let $J$ be any nonempty directed set and ${\omega}$ any {\em cofinal} ultrafilter on $J$, that is, for all $j_0 \in J$, we have $\{j \in J : j \geq j_0\} \in \omega$. Define
\begin{align*}
\mathfrak I_{\omega}(M) &= \left\{ (x_j)_j \in \ell^\infty(J, M) : \lim_{j \to \omega} \|x_j \zeta\| = \lim_{j \to \omega} \|\zeta x_{j}\| =  0, \forall \zeta \in \rL^{2}(M) \right\} \\
\mathfrak M_{\omega}(M) &= \left\{ (x_j)_j \in \ell^\infty(J, M) : \lim_{j \to \omega}\|x_j \zeta - \zeta x_j \| = 0, \forall \zeta \in \rL^{2}(M) \right\} \\
\mathfrak M^{\omega}(M) &= \left\{ (x_j)_j \in \ell^\infty(J, M) :  (x_j)_j \, \mathfrak I_{\omega}(M) \subset \mathfrak I_{\omega}(M) \text{ and } \mathfrak I_{\omega}(M) \, (x_j)_j \subset \mathfrak I_{\omega}(M)\right\}.
\end{align*}
Observe that $\mathfrak I_{\omega}(M)  \subset \mathfrak M_{\omega}(M) \subset \mathfrak M^{\omega}(M)$. The {\em multiplier algebra} $\mathfrak M^{\omega}(M)$ is a $\rC^*$-algebra and $\mathfrak I_{\omega}(M) \subset \mathfrak M^{\omega}(M)$ is a norm closed two-sided ideal. Following \cite[\S 5.1]{Oc85}, we define the {\em ultraproduct von Neumann algebra} by $M^{\omega} = \mathfrak M^{\omega}(M) / \mathfrak I_{\omega}(M)$, which is indeed known to be a von Neumann algebra. Observe that the proof given in \cite[5.1]{Oc85} for the case when $J = \N$ and $\omega \in \beta(\N) \setminus \N$ applies {\em mutatis mutandis}. We denote the image of $(x_j)_j \in \mathfrak M^{\omega}(M)$ by $(x_j)^{\omega} \in M^{\omega}$. Following \cite[\S 2]{Co74}, we define the {\em asymptotic centralizer von Neumann algebra} by $M_{\omega} = \mathfrak M_{\omega}(M) / \mathfrak I_{\omega}(M)$, which is a von Neumann subalgebra of $M^{\omega}$. 

For every $x \in M$, the constant sequence $(x)_j$ lies in the multiplier algebra $\mathfrak M^{\omega}(M)$. We then identify $M$ with $(M + \mathfrak I_{\omega}(M))/ \mathfrak I_{\omega}(M)$ and regard $M \subset M^{\omega}$ as a von Neumann subalgebra. The map $\rE_{\omega} : M^{\omega} \to M : (x_j)^{\omega} \mapsto \sigma \text{-weak} \lim_{j \to {\omega}} x_j$ is a faithful normal conditional expectation. For every faithful state $\varphi \in M_\ast$, the formula $\varphi^{\omega} = \varphi \circ \rE_{\omega}$ defines a faithful normal state on $M^{\omega}$. Observe that $\varphi^{\omega}((x_j)^{\omega}) = \lim_{j \to {\omega}} \varphi(x_j)$ for all $(x_j)^{\omega} \in M^{\omega}$. By \cite[Proposition 2.8]{Co74} (see also \cite[Proposition 4.35]{AH12}), we have $M_\omega = (M' \cap M^\omega)^{\varphi^\omega}$ for every faithful state $\varphi \in M_\ast$.

Let $Q \subset M$ be any von Neumann subalgebra with faithful normal conditional expectation $\rE_Q : M \to Q$. Choose a faithful state $\varphi \in M_\ast$ such that $\varphi = \varphi \circ \rE_Q$. We have $\ell^\infty(J, Q) \subset \ell^\infty(J, M)$, $\mathfrak I_{\omega}(Q) \subset \mathfrak I_{\omega}(M)$ and $\mathfrak M^{\omega}(Q) \subset \mathfrak M^{\omega}(M)$. We then identify $Q^{\omega} = \mathfrak M^{\omega}(Q) / \mathfrak I_{\omega}(Q)$ with $(\mathfrak M^{\omega}(Q) + \mathfrak I_{\omega}(M)) / \mathfrak I_{\omega}(M)$ and regard $Q^{\omega} \subset M^{\omega}$ as a von Neumann subalgebra. Observe that the norm $\|\cdot\|_{(\varphi |_Q)^{\omega}}$ on $Q^{\omega}$ is the restriction of the norm $\|\cdot\|_{\varphi^{\omega}}$ to $Q^{\omega}$. Observe moreover that $(\rE_Q(x_j))_j \in \mathfrak I_{\omega}(Q)$ for all $(x_j)_j \in \mathfrak I_{\omega}(M)$ and $(\rE_Q(x_j))_j \in \mathfrak M^{\omega}(Q)$ for all $(x_j)_j \in \mathfrak M^{\omega}(M)$. Therefore, the mapping $\rE_{Q^{\omega}} : M^{\omega}\to Q^{\omega} : (x_j)^{\omega} \mapsto (\rE_Q(x_j))^{\omega}$ is a well-defined conditional expectation satisfying $\varphi^{\omega} \circ \rE_{Q^{\omega}} = \varphi^{\omega}$. Hence, $\rE_{Q^{\omega}} : M^{\omega} \to Q^{\omega}$ is a faithful normal conditional expectation.

\section{Relative amenability}

Let $M$ be any $\sigma$-finite von Neumann algebra and $A \subset 1_A M 1_A$ and $B \subset M$ any von Neumann subalgebras with expectation. Following \cite{OP07}, we say that $A$ is {\em amenable relative to $B$ inside $M$} and write $A \lessdot_M B$ if there exists a conditional expectation $\Phi : 1_A \langle M, B \rangle 1_A \to A$ such that the restriction $\Phi |_{1_A M 1_A} : 1_A M 1_A \to A$ is faithful and normal. Fix now $\rE_A : 1_A M 1_A \to A$ a faithful normal conditional expectation. We write $(A, \rE_A) \lessdot_M B$ if there exists a conditional expectation $\Phi : 1_A \langle M, B \rangle 1_A \to A$ such that $\Phi |_{1_A M 1_A} = \rE_A$.

We start by reviewing a useful characterization of relative amenability in arbitrary von Neumann algebras due to Isono \cite{Is17}.

\begin{thm}[{\cite[Theorem 3.2]{Is17}}]\label{thm-characterization}
Let $M$ be any $\sigma$-finite von Neumann algebra and $A \subset 1_{A}M 1_{A}$ and $B \subset M$ any von Neumann subalgebras with faithful normal conditional expectations $\rE_A : 1_{A}M1_{A} \to A$ and $\rE_B : M \to B$. Choose faithful states $\varphi_A, \varphi_B \in M_\ast$ such that $1_{A} \in M^{\varphi_{A}}$, $\varphi_A (1_{A} \, \cdot \, 1_{A})= \varphi_A \circ \rE_A$ and $\varphi_B = \varphi_B \circ \rE_B$. Simply write $\core(A) = \Pi_{\varphi_B, \varphi_A}(\core_{\varphi_A}(A))$, $\core(B) = \core_{\varphi_B}(B)$, $\core(M) = \core_{\varphi_B}(M)$. 

The following conditions are equivalent:
\begin{itemize}
\item [$(\rm i)$] $(A, \rE_A) \lessdot_M B$.
\item [$(\rm ii)$] $\core(A) \lessdot_{\core(M)} \core(B)$.
\item [$(\rm iii)$] There exists a ucp map $\Theta : 1_{A}\langle M, B\rangle1_{A} \to \langle 1_{A}M1_{A}, A\rangle$ such that $\Theta(x) = x$ for every $x \in 1_{A}M1_{A}$.
\end{itemize}
\end{thm}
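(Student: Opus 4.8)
The plan is to obtain $(\mathrm{iii})\Leftrightarrow(\mathrm{i})$ by direct manipulations of ucp maps and $(\mathrm{i})\Leftrightarrow(\mathrm{ii})$ by transporting conditional expectations through the crossed product by $\R$ defining the continuous core. For $(\mathrm{iii})\Rightarrow(\mathrm{i})$ the starting point is that the standard-form basic construction $\langle 1_A M 1_A, A\rangle$ always carries a canonical normal conditional expectation onto $A$: writing $e_A$ for the Jones projection of $A \subset 1_A M 1_A$, one has $e_A\langle 1_A M 1_A, A\rangle e_A = A e_A$, so the map $\mathcal{E}_A$ determined by $\mathcal{E}_A(y)e_A = e_A y e_A$ is a normal conditional expectation, and $\mathcal{E}_A(x) = \rE_A(x)$ for $x \in 1_A M 1_A$ since $e_A x e_A = \rE_A(x)e_A$. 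Given $\Theta$ as in $(\mathrm{iii})$, the hypothesis $\Theta|_{1_A M 1_A} = \id$ puts $1_A M 1_A$ in the multiplicative domain of $\Theta$, so $\Theta$ is $(1_A M 1_A)$-bimodular; hence $\Phi := \mathcal{E}_A \circ \Theta \colon 1_A\langle M, B\rangle 1_A \to A$ is a conditional expectation with $\Phi|_{1_A M 1_A} = \rE_A$, i.e.\ $(A, \rE_A)\lessdot_M B$.

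For $(\mathrm{i})\Rightarrow(\mathrm{iii})$ I would use an induce-up-and-compress construction. Starting from a conditional expectation $\Phi \colon 1_A\langle M, B\rangle 1_A \to A$ with $\Phi|_{1_A M 1_A} = \rE_A$, form the Hilbert space $\mathcal{H}$ by separating and completing the algebraic tensor product $1_A\langle M, B\rangle 1_A \otalg \rL^2(A)$ with respect to the $A$-balanced pre-inner-product $\langle x\otimes\xi, y\otimes\eta\rangle := \langle\Phi(y^*x)\xi,\eta\rangle$, which is positive semidefinite because $\Phi$ is completely positive; it carries a normal left $*$-representation $\pi$ of $1_A\langle M, B\rangle 1_A$ by left multiplication on the first leg, and a commuting right action of $A$ on $\rL^2(A)$. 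Because $\Phi$ restricts to $\rE_A$ on $1_A M 1_A$, the closed subspace $\mathcal{H}_0 := \overline{(1_A M 1_A) \otalg \rL^2(A)}$, via $m \otimes \xi \mapsto m\xi$, is canonically the $(1_A M 1_A)$-$A$-bimodule $\rL^2(1_A M 1_A)$; let $p$ be the orthogonal projection onto $\mathcal{H}_0$, which commutes with the right $A$-action and with $\pi(1_A M 1_A)$. Then $\Theta := p\,\pi(\cdot)\,p$, regarded as a map into $\mathbf{B}(\rL^2(1_A M 1_A))$, is a normal ucp map whose range commutes with $J A J$, hence lands in $\langle 1_A M 1_A, A\rangle$, and $\Theta(x) = x$ for $x \in 1_A M 1_A$ because $\pi(x)$ preserves $\mathcal{H}_0$. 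This gives $(\mathrm{iii})$.

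For $(\mathrm{i})\Leftrightarrow(\mathrm{ii})$ I would pass through the continuous core. With $\varphi := \varphi_B\circ\rE_B$, the modular flow $\sigma^\varphi$ fixes $e_B$ and hence extends to $\langle M, B\rangle$, and there is a canonical identification $\langle\core(M),\core(B)\rangle \cong \langle M, B\rangle \rtimes_{\sigma^\varphi}\R$; after compressing by $1_A$ and applying the cocycle identification $\Pi_{\varphi_B,\varphi_A}$, one similarly realizes $\langle\core(1_A M 1_A),\core(A)\rangle$ as a crossed product of $\langle 1_A M 1_A, A\rangle$ by $\R$, inside which $1_A\langle M, B\rangle 1_A$, $1_A M 1_A$ and $A$ are the fixed-point algebras of the dual (scaling) action $\theta$ of $\R$. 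Then $(\mathrm{ii})\Rightarrow(\mathrm{i})$ follows by averaging a conditional expectation on the cores over $\theta$ --- the average being taken in the topology of pointwise $\sigma$-weak convergence inside the relevant weak-$*$ compact convex set of ucp maps, using an invariant mean on $\R$ --- to obtain a $\theta$-equivariant conditional expectation, whose restriction to the $\theta$-fixed points is a conditional expectation $1_A\langle M, B\rangle 1_A \to A$ restricting to $\rE_A$ on $1_A M 1_A$; and $(\mathrm{i})\Rightarrow(\mathrm{ii})$ follows by first averaging $\Phi$ over the modular flow (absorbing the Connes cocycle in $1_A M 1_A$ that relates $\sigma^\varphi|_{1_A M 1_A}$ to the modular flow of $\varphi_A\circ\rE_A$) to make it equivariant for the relevant $\R$-actions, and then extending the equivariant conditional expectation to the crossed products.

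The $(\mathrm{i})\Leftrightarrow(\mathrm{iii})$ part is essentially routine. The hard part will be $(\mathrm{i})\Leftrightarrow(\mathrm{ii})$: establishing the identification of the basic construction of the continuous cores with a crossed product of $\langle M, B\rangle$ and tracking precisely how the $1_A$-compression, the inclusion $A \subset 1_A M 1_A$ and the cocycle gluing $\Pi_{\varphi_B,\varphi_A}$ interact with it, and then making the two averaging arguments rigorous --- in particular, checking that the invariant-mean construction remains in the correct weak-$*$ compact convex set of ucp maps and yields a genuine conditional expectation with the prescribed restriction to $1_A M 1_A$ (respectively $\core(1_A M 1_A)$).
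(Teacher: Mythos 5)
First, a point of reference: the paper does not prove this theorem --- it is quoted verbatim from \cite[Theorem 3.2]{Is17} --- so your proposal can only be measured against the known argument there. Your treatment of $(\rm i)\Leftrightarrow(\rm iii)$ is essentially that argument and is correct: the compression $y\mapsto e_Ay e_A=\mathcal E_A(y)e_A$ does define a normal conditional expectation $\langle 1_AM1_A,A\rangle\to A$ restricting to $\rE_A$ on $1_AM1_A$ (note that $e_A$ here is the Jones projection attached to the given $\rE_A$), so $\mathcal E_A\circ\Theta$ witnesses $(\rm i)$; and the $\Phi$-induced correspondence $1_A\langle M,B\rangle 1_A\otimes_\Phi\rL^2(A)$ with the compression to $\overline{(1_AM1_A)\rL^2(A)}=\rL^2(1_AM1_A)$ gives $(\rm iii)$. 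One slip: the $\Theta$ you build in $(\rm i)\Rightarrow(\rm iii)$ is \emph{not} normal in general, since $\Phi$ is only assumed normal on $1_AM1_A$; this is harmless because $(\rm iii)$ does not ask for normality, but you should not assert it.

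The gaps are in $(\rm i)\Leftrightarrow(\rm ii)$, and they are not merely bookkeeping. For $(\rm i)\Rightarrow(\rm ii)$ you propose to make $\Phi$ equivariant and then ``extend the equivariant conditional expectation to the crossed products''; but $\Phi:1_A\langle M,B\rangle 1_A\to A$ is normal only on $1_AM1_A$, and a non-normal ucp map has no functorial extension to a crossed product by $\R$ (one cannot form $\Phi\ovt\id_{\mathbf B(\rL^2(\R))}$ on the von Neumann tensor product). The way around this is precisely to pass through the expectation-free condition $(\rm iii)$: embed $\langle\core(M),\core(B)\rangle$ into $\langle M,B\rangle\ovt\mathbf B(\rL^2(\R))$, apply $\Theta\ovt\id$, and then project/average onto $\langle\core(1_AM1_A),\core(A)\rangle$ using the dual action --- your plan bypasses $(\rm iii)$ here and, as written, fails. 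Symmetrically, for $(\rm ii)\Rightarrow(\rm i)$ the invariant-mean average of $t\mapsto\theta_t\circ\mathcal E\circ\theta_{-t}$ restricted to $1_AM1_A$ is the mean of $t\mapsto\theta_t(\mathcal E(x))$, which is a weak-$\ast$ limit of normal maps and need be neither normal nor equal to $\rE_A$; since the definition of $(A,\rE_A)\lessdot_MB$ requires the restriction to be exactly the given faithful normal $\rE_A$, this must be repaired, e.g.\ by first replacing $\mathcal E$ so that $\mathcal E|_{\core(1_AM1_A)}$ is the canonical trace-preserving expectation onto $\core(A)$ (whose restriction to $\pi(1_AM1_A)$ is already $\pi\circ\rE_A$, and which the $\theta$-average then leaves untouched), using independence of relative amenability from the choice of expectation --- a fact that is itself extracted from $(\rm iii)$. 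So the architecture is right, but the two averaging steps you flag as ``the hard part'' are exactly where the argument as stated breaks, and both are fixed by routing the core comparison through condition $(\rm iii)$ rather than through conditional expectations.
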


As pointed out in \cite[Remark 3.3]{Is17}, condition $(\rm iii)$ in Theorem \ref{thm-characterization} above clearly does not depend on the choice of $\rE_{A}$. It follows that the notion of relative amenability $A \lessdot_{M} B$ does not depend on the choice of $\rE_{A}$. In particular, if $A \lessdot_M B$ then $(A, \rE_A) \lessdot_M B$ for every faithful normal conditional expectation $\rE_{A} : 1_{A }M 1_{A} \to A$. 

As a straightforward consequence of Theorem \ref{thm-characterization}, the notion of relative amenability in arbitrary von Neumann algebras is transitive (see \cite[Proposition 2.4 (3)]{OP07} for the tracial case).

\begin{cor}[{\cite[Corollary 3.4]{Is17}}]\label{cor-transitivity}
Let $M$ be any $\sigma$-finite von Neumann algebra and $A, B, C \subset M $ any von Neumann subalgebras with expectation. If $A \lessdot_{M} B$ and $B \lessdot_{M} C$, then $A \lessdot_{M} C$.
\end{cor}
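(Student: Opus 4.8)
The plan is to deduce transitivity entirely from the characterization in Theorem \ref{thm-characterization}, specifically from the equivalence $(\rm i) \Leftrightarrow (\rm iii)$, since condition $(\rm iii)$ is the one that composes well. First I would fix a faithful normal conditional expectation $\rE_A : M \to A$ (recall that $A, B, C$ are now genuine von Neumann subalgebras of $M$, so $1_A = 1$, which simplifies the bookkeeping relative to the general statement of Theorem \ref{thm-characterization}). Applying Theorem \ref{thm-characterization} to the pair $A \lessdot_M B$ gives a unital completely positive map $\Theta_1 : \langle M, B \rangle \to \langle M, A \rangle$ with $\Theta_1(x) = x$ for all $x \in M$. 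Applying it to $B \lessdot_M C$ gives a ucp map $\Theta_2 : \langle M, C \rangle \to \langle M, B \rangle$ with $\Theta_2(x) = x$ for all $x \in M$. The natural idea is to compose $\Theta_1 \circ \Theta_2 : \langle M, C \rangle \to \langle M, A \rangle$, which is again ucp and restricts to the identity on $M$; by the $(\rm iii) \Rightarrow (\rm i)$ direction of Theorem \ref{thm-characterization} this yields $A \lessdot_M C$, as desired.

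The step that requires care — and which I expect to be the main (and only real) obstacle — is that the composition $\Theta_1 \circ \Theta_2$ does not literally typecheck: $\Theta_2$ lands in $\langle M, B \rangle$, while $\Theta_1$ is defined on $\langle M, B \rangle$, so this actually does compose correctly once one checks that the $\langle M, B \rangle$ appearing as the target of $\Theta_2$ and as the source of $\Theta_1$ are the same von Neumann algebra. They are: $\langle M, B \rangle = (J^M B J^M)' \cap \mathbf B(\rL^2(M))$ depends only on the inclusion $B \subset M$ (as recalled in the Jones basic construction subsection), not on any auxiliary choice of state or expectation, so there is no ambiguity. Hence the only genuine verification is that $\Theta_1 \circ \Theta_2$ is ucp (immediate, as a composition of ucp maps) and unital with $(\Theta_1 \circ \Theta_2)|_M = \id_M$ (immediate from $\Theta_2|_M = \id_M$ and $\Theta_1|_M = \id_M$).

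Therefore the proof is short: invoke $(\rm i) \Rightarrow (\rm iii)$ of Theorem \ref{thm-characterization} twice to produce $\Theta_1$ and $\Theta_2$, form $\Theta := \Theta_1 \circ \Theta_2 : \langle M, C \rangle \to \langle M, A \rangle$, note it is ucp and identity on $M$, and conclude $A \lessdot_M C$ by $(\rm iii) \Rightarrow (\rm i)$ of Theorem \ref{thm-characterization}. No deformation/rigidity input or modular-theoretic argument is needed; the entire content has been pushed into Theorem \ref{thm-characterization}, whose formulation in terms of maps between basic constructions is precisely designed to make this composition argument work (this mirrors the tracial argument of \cite[Proposition 2.4 (3)]{OP07}).
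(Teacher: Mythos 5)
Your proposal is correct and is exactly the intended argument: the paper presents this corollary as a direct consequence of Theorem \ref{thm-characterization}, obtained by composing the two ucp maps furnished by condition $(\rm iii)$ and invoking $(\rm iii) \Rightarrow (\rm i)$, precisely as you do. Your side remark that $\langle M, B\rangle$ depends only on the inclusion $B \subset M$ (so the composition is well defined) is the right observation and matches the paper's setup of the basic construction.
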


We prove below a series of elementary yet useful results regarding stability properties of the notion of relative amenability in arbitrary von Neumann algebras.

\begin{lem}\label{lem-elementary-properties1}
Let $M$ be any $\sigma$-finite von Neumann algebra and $A \subset 1_A M 1_A$ and $B \subset M$ any von Neumann subalgebras with expectation. The following assertions hold:
\begin{itemize}
	\item[$\rm (i)$] Let $p,p'\in A$ (resp.\ $q,q'\in A'\cap 1_A M1_A$) be any projections that are equivalent in $A$ (resp.\ in $A'\cap 1_A M1_A$). We have that $pAp q \lessdot_M B$ if and only if $p'Ap' q' \lessdot_M B$.
	\item[$\rm (ii)$] Let $H$ be any separable Hilbert space. If $A \lessdot_M B$, then $A\ovt \mathbf B(H) \lessdot_{M\ovt \mathbf B(H)} B\ovt \mathbf B(H)$.
	\item[$\rm (iii)$] Let $p\in A$ and $q \in A'\cap 1_A M1_A$ be any projections such that $pq\neq 0$. If $A \lessdot_M B$, then $pApq \lessdot_M B$.
	\item[$\rm (iv)$] Let $p\in A$ and $q \in A'\cap 1_A M1_A$ be any projections such that $pq\neq 0$. Let $z_p\in \mathcal{Z}(A)$ and $z_q\in \mathcal{Z}(A'\cap 1_A M1_A)$ be the central support projections of $p$ and $q$ respectively.  If $pApq \lessdot_M B$, then $Az_pz_q \lessdot_M B$.
\item [$(\rm v)$] Let $(p_i)_{i \in I}$ be any family of projections in $A$ and denote by $p = \bigvee_{i \in I} p_i$. If $Ap_i \lessdot_M B$ for every $i \in I$, then $Ap \lessdot_M B$. The same conclusion holds for every family of projections $(p_i)_{i \in I}$ in $A'\cap 1_A M1_A$.
\end{itemize}
\end{lem}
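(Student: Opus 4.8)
The strategy throughout is to reduce each assertion to the operator-theoretic characterization \emph{(iii)} of relative amenability in Theorem \ref{thm-characterization}, which is manifestly independent of all chosen expectations and behaves well under cut-downs and tensoring. For \emph{(i)}, given a partial isometry $v \in A$ with $v^*v = p$, $vv^* = p'$ (and similarly a partial isometry $w \in A' \cap 1_AM1_A$ implementing the equivalence of $q,q'$), the map $\Theta' \colon p'Ap'q'\,\langle M,B\rangle\,p'Ap'q' \to \langle p'Ap'q'M p'Ap'q', p'Ap'q'\rangle$ is obtained from the given $\Theta$ for $pApq$ by conjugating with $vw$ (viewed inside $\langle M,B\rangle$ and inside the basic construction, using that $JBJ$-commutation is preserved); symmetry gives the converse. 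For \emph{(ii)}, one tensors the ucp map $\Theta$ with $\id_{\mathbf B(H)}$ after identifying $\langle M \ovt \mathbf B(H), B \ovt \mathbf B(H)\rangle = \langle M, B\rangle \ovt \mathbf B(H)$ and likewise for $\langle (A\ovt \mathbf B(H)), A \ovt \mathbf B(H)\rangle$ versus $\langle A', A\rangle \ovt \mathbf B(H)$; the amplified map is again ucp and restricts to the identity on $M \ovt \mathbf B(H)$.

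For \emph{(iii)} I would first treat the case $q \in A' \cap 1_AM1_A$ by compressing $\Theta$ with $q$ on both sides: $x \mapsto q\Theta(qxq)q$ lands in $q\langle 1_AM1_A, A\rangle q = \langle qM q, Aq\rangle$ (using the standard identification of compressed basic constructions) and is the identity on $qMq$, giving $Aq \lessdot_M B$; then one handles the case $p \in A$ similarly (or notes $pAp \subset pMp$ with expectation and applies the $p$-compression, which is cleaner since $p \in A$), and combines the two, using transitivity (Corollary \ref{cor-transitivity}) or a direct double-compression, to get $pApq \lessdot_M B$ from $A \lessdot_M B$. Assertion \emph{(iv)} is the converse direction: from $pApq \lessdot_M B$ one wants $Az_pz_q \lessdot_M B$. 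Here I would use \emph{(i)} to "spread" $p$ around its central support: $z_p$ is a supremum of projections each subequivalent to $p$ in $A$, and likewise $z_q$ for $q$ in $A' \cap 1_AM1_A$; combined with \emph{(v)} (the supremum stability) one upgrades $pApq$-amenability to $Az_pz_q$-amenability. Finally, for \emph{(v)}, given ucp maps $\Theta_i \colon p_i\langle M,B\rangle p_i \to \langle p_iMp_i, Ap_i\rangle$ fixing $p_iMp_i$, one passes to a limit point (in the point-ultraweak topology, using a cofinal ultrafilter on the net of finite subsets of $I$, invoking weak-$\ast$ compactness of the unit ball of ucp maps) to produce $\Theta \colon p\langle M,B\rangle p \to \langle pMp, Ap\rangle$ fixing $pMp$; some care is needed to see that the block-diagonal assembly of the $\Theta_i$ over a finite subset $F \subset I$ with $p_F = \bigvee_{i\in F}p_i$ (or rather, after orthogonalizing, a genuine sum over an orthogonal refinement) does land in the right basic construction, and that ucp-ness survives the limit.

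The main obstacle, as I see it, is the bookkeeping in assertions \emph{(iii)}, \emph{(iv)} and \emph{(v)} around the identifications of compressed and assembled Jones basic constructions — e.g.\ that $q\langle 1_AM1_A, A\rangle q$ really is $\langle qMq, Aq\rangle$ when $q \in A' \cap 1_AM1_A$ (so that the reduced inclusion still has an expectation and the $J$-picture $q(JAJ)'q = (J(Aq)J)' \cap \mathbf B(q\rL^2(M))$ holds), and the matching of $\bigvee_i \mathbf B(p_i\rL^2(M))$-type algebras with a single basic construction in \emph{(v)}. None of this is deep, but it is exactly the place where one must be careful that all the relevant inclusions remain "with expectation" and that the ucp maps genuinely compose and restrict as claimed; once those identifications are in hand, each item is a short diagram chase via condition \emph{(iii)}.
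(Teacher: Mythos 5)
Your reductions for (i), (ii), (iii) and (v) are in the spirit of the paper's proof, with two small caveats: in (ii) the amplification $\Theta \otimes \id_{\mathbf B(H)}$ of a \emph{non-normal} ucp map is not automatically defined on the von Neumann algebraic tensor product, so one must cut down by finite-rank projections $1 \otimes q_K$ and take a point-$\sigma$-weak ultralimit of $\Phi \otimes \id_{\mathbf M_{K+1}(\C)}$ (this is exactly what the paper does); and in (iii) and (v) the compression realizing $\langle qMq, Aq\rangle$ inside $\langle 1_AM1_A, A\rangle$ is by $qJqJ$, not by $q$. Also note that if in (v) you assemble the ucp maps $\Theta_i$ that fix $p_iMp_i$ pointwise, the resulting map restricts on $pMp$ to $x \mapsto \sum_i p_i x p_i$, which is not the identity; you should assemble conditional expectations instead and use that the definition of $\lessdot_M$ only asks the restriction to $pMp$ to be faithful and normal.

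The genuine gap is in (iv), where your plan is circular. You propose to deduce $Az_pz_q \lessdot_M B$ by writing $z_p$ as a supremum of projections subequivalent to $p$ and invoking (v); but the reduction in (v) from arbitrary projections $p_i \in A$ to central ones is precisely an application of (iv) (that is how the paper's proof of (v) begins), and without that reduction the assembly in (v) fails for non-commuting $p_i$: the map $x \mapsto p_1 x p_1 \oplus p_2 x p_2$ is not a homomorphism on $(p_1 \vee p_2)A(p_1 \vee p_2)$, and relative amenability of the proper subalgebra $\bigoplus_i p_i A p_i$ of $z_p A z_p$ does not pass \emph{upward} to $Az_p$ (it only passes downward to subalgebras with expectation). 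Promoting amenability of a corner $pApq$ to amenability of $Az_pz_q$ requires a device you do not have: the paper chooses partial isometries $v_n \in A$ and $w_n \in A' \cap 1_AM1_A$ with $v_n^*v_n \leq p$, $\sum_n v_nv_n^* = z_p$, $w_n^*w_n \leq q$, $\sum_n w_nw_n^* = z_q$, assembles them into $V = \sum_n v_n \otimes e_{1,n}$ and $W = \sum_n w_n \otimes e_{1,n}$ so that $z_p \otimes e_{1,1} = VV^* \sim V^*V \leq p \otimes 1$ inside $A \ovt \mathbf B(\ell^2)$ (and similarly for $W$), and then applies (ii), (iii) and (i) in the amplified algebra to get $Az_pz_q \otimes \C e_{1,1} \lessdot_{M \ovt \mathbf B(\ell^2)} B \ovt \mathbf B(\ell^2)$, hence the claim. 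This amplification trick is the missing idea; without it, neither your (iv) nor, consequently, the general case of your (v) goes through.
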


\begin{proof}
Before starting the proof, we mention that for any projection $q\in A'\cap 1_AM1_A$, the inclusion $Aq \subset qMq$ is with expectation. This follows from \cite[Proposition 2.2]{HU15b}. In the proof of item $(\rm iii)$ below, we will actually construct a faithful normal conditional expectation for $Aq \subset qMq$ in a more direct way.

$(\rm i)$ This is trivial. 

$(\rm ii)$ It suffices to prove the statement when $H = \ell^{2}$. Assume that $A \lessdot_M B$ and fix a conditional expectation $\Phi : 1_A \langle M, B\rangle 1_A \to A$ such that $\Phi|_{1_AM1_A}$ is faithful and normal. Denote by $(\delta_k)_{k \in \N}$ the canonical orthonormal basis of $\ell^2$. For every $K \in \N$, denote by $q_K : \ell^2 \to \spn \left \{\delta_k : 0 \leq k \leq K \right\}$ the orthogonal projection. We identify $\mathbf M_{K + 1}(\C) = q_K \mathbf B(\ell^2)q_K$. Choose $\omega \in \beta(\N) \setminus \N$ and define the conditional expectation $\Phi^\omega : 1_A \langle M, B\rangle 1_A \ovt \mathbf B(\ell^2) \to A \ovt \mathbf B(\ell^2)$ by the formula $$\forall x \in 1_A \langle M, B\rangle 1_A \ovt \mathbf B(\ell^2), \quad \Phi^\omega(x) = \sigma\text{-weak} \lim_{K \to \omega} (\Phi \otimes \id_{\mathbf M_{K + 1}(\C)})((1 \otimes q_K) x (1 \otimes q_K)).$$  Moreover, we have $\Phi^\omega |_{1_AM1_A \ovt \mathbf B(\ell^2)} = \Phi|_{1_AM1_A} \ovt \id_{\mathbf B(\ell^2)}$ and thus $\Phi^\omega |_{1_AM1_A \ovt \mathbf B(\ell^2)}$ is faithful and normal. This shows that $A\ovt \mathbf B(\ell^{2}) \lessdot_{M\ovt \mathbf B(\ell^{2})} B\ovt \mathbf B(\ell^{2})$.

$(\rm iii)$ It is easy to see that $pAp\lessdot_M B$, so we may assume that $p=1_A$. 
Let $\Phi: 1_A\langle M,B\rangle1_A \to A$ be any conditional expectation such that $\rE=\Phi|_{1_A M 1_A}$ is faithful and normal. Observe that $\rE(q) \in \mathcal{Z}(A)$ and denote by $z_q$ its support projection in $\mathcal{Z}(A)$. Since $\rE(qz_{q})= \rE(q)z_{q} = \rE(q) $, it follows that $q z_{q} = q$. There is an increasing sequence $(z_n)_n$ of projections in $\mathcal{Z}(A)z_q$ such that $\rE(q)z_n$ is invertible in $\mathcal{Z}(A)z_n$ and that $z_n \to z_q$ strongly. Put $q_n=qz_n \in A' \cap 1_A M 1_A$. For each $n \in \N$, define the conditional expectation $ \Phi_n\colon  q_n\langle M,B\rangle q_n \to Aq_n$ by the formula
	$$\forall x\in q_n\langle M,B\rangle q_n, \quad \Phi_n(x)=\rE(q)^{-\frac{1}{2}} q_n \Phi(q_n x q_n)\rE(q)^{-\frac{1}{2}} q_n=\Phi(q x q)q\rE(q)^{-1} z_n.$$
It is easy to see that $\Phi_{n}|_{q_{n} M q_{n}}$ is faithful and normal. Choose $\omega\in\beta(\N)\setminus \N$ and define the ucp map $\Phi^\omega \colon  q\langle M,B\rangle q \to qMq $ by the formula
	$$\forall x \in q\langle M,B\rangle q, \quad \Phi^\omega(x)=\sigma\text{-weak} \lim_{n \to \omega}\Phi_n(q_n xq_n).$$
Then $\Phi^{\omega}$ is a norm one projection whose range is equal to $Aq$ and so $\Phi^\omega \colon  q\langle M,B\rangle q \to Aq $ is a conditional expectation. Choose a faithful normal state $\psi_A$ on $A$ and extend it to $1_AM1_A$ by $\psi=\psi_A\circ \rE$. For every $x\in M$, we have
\begin{align*}
	(\psi\circ \Phi^\omega)(qxq)
	& = \lim_{n \to \omega}(\psi\circ\Phi_n)(q_n xq_n) \\
	& = \lim_{n \to \omega}\psi\left( \rE(q x q)\rE(q)^{-1} q z_n \right) \\
	& = \lim_{n \to \omega}\psi_A\left( \rE(q x q)\rE(q)^{-1} \rE(q) z_n \right) \\
	& = \lim_{n \to \omega}\psi_A\left( \rE(q x q)z_n \right) \\
	& = \psi_A\left( \rE(q x q) z_{q} \right) = \psi_A\left( \rE(q x qz_{q}) \right) = \psi_A\left( \rE(q x q) \right) = \psi(q x q). 
\end{align*}
 Since $\psi$ is faithful and normal on $qMq$ and $\psi|_{qMq} \circ \Phi^{\omega}|_{qMq} = \psi|_{qMq}$, it follows that $\Phi^\omega|_{qMq}$ is faithful and normal. Therefore, we conclude that $Aq \lessdot_MB$.

$(\rm iv)$ Assume that $pApq \lessdot_M B$. By item $(\rm ii)$, we have $pApq\ovt \mathbf B(\ell^2) \lessdot_{M\ovt \mathbf B(\ell^2)} B\ovt \mathbf B(\ell^2)$. We may choose sequences of partial isometries $v_n \in A$ and $w_n\in A'\cap 1_AM 1_A$ such that $p_n=v_n^* v_n \leq p$ and $q_n=w_n^*w_n \leq q$ for every $n \in \N$ and that $\sum_{n \in \N}v_n v_n^* = z_p$ and $\sum_{n \in \N}w_n w_n^* = z_q$. Here $p_n$ and $q_n$ are possibly zero for some $n\in \N$. Let $(e_{i,j})_{i,j \in \N}$ be the matrix unit for $\mathbf B(\ell^2)$ and define partial isometries 
	$$V = \sum_{n\in \N}v_n \otimes e_{1,n} \in A \ovt \mathbf B(\ell^2), \quad W = \sum_{n\in \N}w_n \otimes e_{1,n} \in (A'\cap 1_A M 1_A) \ovt \mathbf B(\ell^2).$$
Observe that
	$$VV^* = z_p \otimes e_{1,1}, \quad V^*V =\sum_{n\in \N}p_n \otimes e_{n,n}, \quad WW^* = z_q \otimes e_{1,1}, \quad W^*W =\sum_{n\in \N}q_n \otimes e_{n,n}.$$ 
Then using item $(\rm iii)$, we get that 
	$$V^*V\left(A\ovt \mathbf B(\ell^2)\right) V^*V W^*W\lessdot_{M\ovt \mathbf B(\ell^2)} B\ovt \mathbf B(\ell^2)$$
and item $(\rm i)$ implies that
	$$VV^*\left(A\ovt \mathbf B(\ell^2)\right) VV^* WW^*\lessdot_{M\ovt \mathbf B(\ell^2)} B\ovt \mathbf B(\ell^2).$$
We thus obtain $Az_pz_q\ovt \C e_{1,1}\lessdot_{M\ovt \mathbf B(\ell^2)} B\ovt \mathbf B(\ell^2)$. Using the identification $M\ovt \C e_{1,1}\cong M$, we conclude that $Az_pz_q\lessdot_{M} B$.

$(\rm v)$ Using items $(\rm iii)$ and $(\rm iv)$, we may assume that all the projections $p_i$ are contained in $\mathcal{Z}(A)$ (or in $\mathcal{Z}(A'\cap 1_AM1_A)$). Since $\mathcal Z(A) \subset \mathcal Z(A' \cap 1_{A} M 1_{A})$, we may further assume that all the projections $p_{i}$ are contained in $\mathcal Z(A' \cap 1_{A} M 1_{A})$. Put $p = \bigvee_{i \in I} p_{i} \in \mathcal Z(A' \cap 1_{A}M1_{A})$.

We first prove the case when $I=\{1,2\}$. Since $p_1\vee p_2 = p_1 + (p_2 - p_1p_2)$, using item $(\rm iii)$ and up to exchanging $p_2$ with $p_2 - p_1p_2$, we may assume that $p_1p_2 = 0$. For every $i \in \{1,2\}$, let $\Phi_i \colon p_i\langle M,B \rangle p_i \to Ap_i$ be any conditional expectation such that $\Phi_i|_{p_i Mp_i}$ is faithful and normal. Define the conditional expectation $\Phi \colon 1_A\langle M,B \rangle 1_A \to Ap_1 \oplus A p_2$ by the formula
	$$\forall x\in 1_A\langle M,B\rangle 1_A, \quad \Phi(x)=\Phi_1(p_1 x p_1) \oplus \Phi_2(p_2 x p_2).$$
Since $\Phi|_{1_AM1_A}$ is faithful and normal, this shows that $Ap_1 \oplus Ap_2 \lessdot_M B$. Since the inclusion $A \subset Ap_1 \oplus Ap_2$ is with expectation, we obtain that $A\lessdot_M B$. 

We next prove the general case. Denote by $J$ the directed set of all finite subsets of $I$. For every $\mathcal F \in J$, put $p_{\mathcal F} = \bigvee_{i \in \mathcal F} p_i \in \mathcal Z(A' \cap 1_{A}M1_{A})$. Observe that $Ap_{\mathcal F}\lessdot_MB$ by the result obtained in the last paragraph and a straightforward induction. Moreover, we have that $p_{\mathcal F} \to p$ strongly as ${\mathcal F} \to \infty$. Choose a faithful state $\psi \in (1_{A} M 1_{A})_{\ast}$ such that $A \subset 1_{A} M 1_{A}$ is globally invariant under $\sigma^{\psi}$. Since $p_{\mathcal F} \in \mathcal Z(A' \cap 1_{A} M 1_{A})$, we have $\sigma_{t}^{\psi}(p_{\mathcal F}) = p_{\mathcal F}$ for every $t \in \R$ and so $Ap_{\mathcal F} \subset p_{\mathcal F} M p_{\mathcal F}$ is globally invariant under $\sigma^{\psi}$. Denote by $\rE_{\mathcal F} : p_{\mathcal F} M p_{\mathcal F} \to A p_{\mathcal F}$ the unique $\psi$-preserving conditional expectation. Since $Ap_{\mathcal F} \lessdot_{M} B$, we may choose a conditional expectation $\Phi_{\mathcal F} : p_{\mathcal F} \langle M, B\rangle p_{\mathcal F} \to A p_{\mathcal F}$ such that $\Phi_{\mathcal F}|_{p_{\mathcal F}Mp_{\mathcal F}} = \rE_{\mathcal F}$. Choose a cofinal ultrafilter $\omega$ on $J$ and define the ucp map $\Phi^{\omega} : p\langle M, B\rangle p \to pMp$ by the formula
$$\forall x \in p\langle M, B \rangle p, \quad \Phi^{\omega}(x) = \sigma\text{-weak} \lim_{\mathcal F \to \omega} \Phi_{\mathcal F}(p_{\mathcal F} x p_{\mathcal F}).$$
Then $\Phi^{\omega}$ is a norm one projection whose range is equal to $Ap$ and so $\Phi^{\omega} : p\langle M, B\rangle p \to Ap$ is a conditional expectation. Moreover, for every $x \in pMp$, we have
\begin{align*}
(\psi \circ \Phi^{\omega})(pxp) &=  \lim_{\mathcal F \to \omega} (\psi \circ \Phi_{\mathcal F})(p_{\mathcal F} x p_{\mathcal F}) \\
&=  \lim_{\mathcal F \to \omega} \psi(p_{\mathcal F} x p_{\mathcal F}) \\
&= \psi(pxp).
\end{align*} 
Since $\psi$ is faithful and normal on $pMp$ and $\psi|_{pMp} \circ \Phi^{\omega}|_{pMp} = \psi|_{pMp}$, it follows that $\Phi^{\omega}|_{pMp}$ is faithful and normal. Therefore, we conlude that $Ap \lessdot_{M} B$.
\end{proof}

\begin{lem}\label{lem-elementary-properties2}
Let $M$ be any $\sigma$-finite von Neumann algebra and $A \subset 1_A M 1_A$ and $B \subset M$ any von Neumann subalgebras with expectation. The following assertions hold:
\begin{itemize}
\item [$(\rm i)$] Assume that there exists a conditional expectation $\Phi : 1_A \langle M, B \rangle 1_A \to A$ such that $\Phi |_{1_A M 1_A}$ is normal. Then there exists a nonzero projection $q \in \mathcal{Z}(A' \cap 1_A M 1_A)$ such that $Aq \lessdot_M B$.
\item [$(\rm ii)$] Let $q \in A' \cap 1_AM1_A$ be any nonzero projection and put $P = Aq \oplus A(1_A -q)$. Then $P \subset 1_AM1_A$ is with expectation and if $A \lessdot_M B$, then $P \lessdot_M B$.
\item [$(\rm iii)$] Assume that $A' \cap 1_AM1_A  = \mathcal Z(A)$. Let $u \in \mathcal N_{1_AM1_A}(A)$ be any element and put $Q = \langle A, u\rangle$. Then $Q \subset 1_AM1_A$ is with expectation and if $A \lessdot_M B$, then $Q \lessdot_M B$.
\end{itemize}
\end{lem}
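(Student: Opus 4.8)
Throughout write $N = 1_A M 1_A$.

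\textbf{Item (i).} The plan is to pass to the ``support corner'' of the normal --- but possibly non\-faithful --- conditional expectation $\rho := \Phi|_{N} : N \to A$ and to transport $\Phi$ there. Using the operator Kadison--Schwarz inequality $\rho(z^*x)^*\rho(z^*x) \le \|z\|^2\, \rho(x^*x)$, one sees that $\mathfrak{n} := \{x \in N : \rho(x^*x) = 0\} = \{x \in N : \rho(z^*x) = 0 \text{ for all } z \in N\}$ is a $\sigma$\-weakly closed left ideal of $N$ which is stable under right multiplication by $A$; writing $\mathfrak{n} = N e$ with $e$ a projection, the latter stability forces $e \in A' \cap N$. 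Put $q := 1_A - e \in A' \cap N$. Since $e \in \mathfrak{n}$ we get $\rho(e) = 0$, hence $\rho(q) = 1_A$; in particular $q \neq 0$ and the normal $\ast$\-homomorphism $A \to Aq \subset qNq = qMq$, $a \mapsto aq$, is injective (if $aq = 0$ then $a = ae \in \mathfrak{n}$, so $a^*a = \rho(a^*a) = 0$). Now define $\Psi(x) := \Phi(x)q = q\Phi(x)q$ for $x \in q\langle M, B\rangle q$. Then $\Psi$ takes values in $Aq$ and is a conditional expectation onto $Aq$: it is unital since $\Psi(q) = \rho(q)q = q$, it restricts to the identity on $Aq$ since $\Phi(aq) = \rho(aq) = a\rho(q) = a$, and it is $Aq$\-bimodular since $(aq)x(bq) = axb$ whenever $x \in q\langle M, B\rangle q$ and $a,b \in A$ (the projection $q$ is absorbed by $x$ and commutes past $A$). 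Moreover $\Psi|_{qMq}$ is the map $x \mapsto \rho(x)q$, which is normal and faithful (if $\rho(x)q = 0$ with $x \in qMq$, then $\rho(x)=0$ by injectivity of $a\mapsto aq$, so $x\in\mathfrak n$, and $\mathfrak n\cap qMq=\{0\}$). Hence $Aq \lessdot_M B$, and Lemma~\ref{lem-elementary-properties1}(iv), applied with the projections $1_A \in A$ and $q \in A' \cap N$, yields $Az_q \lessdot_M B$ where $z_q \in \mathcal{Z}(A' \cap N)$ is the nonzero central support of $q$.

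\textbf{Item (ii).} The map $\mathcal{E}_0 : N \to qNq \oplus (1_A - q)N(1_A - q)$, $x \mapsto qxq + (1_A - q)x(1_A - q)$, is a faithful normal conditional expectation (it is the expectation onto $\{q\}' \cap N$); composing it with faithful normal expectations $qNq \to Aq$ and $(1_A - q)N(1_A - q) \to A(1_A - q)$, which exist by \cite[Proposition 2.2]{HU15b}, shows that $P \subset N$ is with expectation. For the second assertion, Lemma~\ref{lem-elementary-properties1}(iii) gives $Aq \lessdot_M B$ and $A(1_A - q) \lessdot_M B$; fixing conditional expectations $\Phi_1 : q\langle M, B\rangle q \to Aq$ and $\Phi_2 : (1_A - q)\langle M, B\rangle (1_A - q) \to A(1_A - q)$ with faithful normal restrictions to $qMq$ and $(1_A-q)M(1_A-q)$, one sets
$$\Phi(y) := \Phi_1(qyq) \oplus \Phi_2\big((1_A - q)y(1_A - q)\big), \qquad y \in 1_A\langle M, B\rangle 1_A .$$
A direct computation --- using again that $q \in A' \cap N$ is a projection absorbed by and commuting past elements of $P$ --- shows that $\Phi$ is a conditional expectation onto $P$ whose restriction to $N$ is the faithful normal map $x \mapsto \Phi_1(qxq) \oplus \Phi_2((1_A - q)x(1_A - q))$. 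Hence $P \lessdot_M B$.

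\textbf{Item (iii).} Since $u \in N \subset M$ commutes with $JBJ$, $\Ad(u)$ restricts to an automorphism of $1_A\langle M, B\rangle 1_A$ preserving $N$, $A$ and $Q$. First, $Q = \langle A, u\rangle \subset N$ is with expectation: this is a standard fact about the von Neumann algebra generated by a normalizing unitary of a subalgebra with expectation whose relative commutant coincides with its centre (see, e.g., \cite{HU15b}), and averaging the dual action of the compact group $\widehat{\langle u\rangle}$ (a torus or a finite cyclic group, according to whether $u$ has infinite order modulo $A$) yields a faithful normal conditional expectation $Q \to A$. As the cyclic group $\langle u\rangle$ is amenable, $Q$ is amenable relative to $A$ inside $Q$, i.e.\ $Q \lessdot_Q A$: this is the classical fact that a crossed product by an amenable group is amenable relative to its base, obtained by a F\o lner averaging of the compressions $u^n e_A u^{-n}$ of the Jones projection $e_A$ of $A \subset Q$ (in the non\-tracial case one first reduces to the continuous cores via Theorem~\ref{thm-characterization}, where this is the usual tracial statement). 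An elementary argument with the Jones projection $e_Q$ of $Q \subset N$ --- namely $\Psi_0 \mapsto \big(x \mapsto \Psi_0(e_Q x e_Q)\big)$, which uses only $e_Q\langle N, A\rangle e_Q = \langle Q, A\rangle$ and that $e_Q$ commutes with $Q$ --- upgrades $Q \lessdot_Q A$ to $Q \lessdot_N A$, that is, $Q \lessdot_M A$. Transitivity of relative amenability (Corollary~\ref{cor-transitivity}) applied to $Q \lessdot_M A$ and $A \lessdot_M B$ then gives $Q \lessdot_M B$. The only genuinely delicate point is this last item in the type ${\rm III}$ setting: since no faithful normal $\Ad(u)|_A$\-invariant state need exist on $A$, neither the conditional expectation onto $Q$ nor the relative amenability $Q \lessdot_Q A$ is immediate, and the plan is to circumvent both --- by the dual\-action argument for the former and by passing to the continuous core for the latter --- so as to reduce everything to the classical tracial statements before concluding by transitivity.
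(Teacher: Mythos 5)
Items (i) and (ii) of your proof coincide with the paper's: in (i) your projection $q=1_A-e$ is exactly the support projection of $\Phi|_{1_AM1_A}$ and the map $x\mapsto\Phi(x)q$ followed by Lemma \ref{lem-elementary-properties1} (iv) is verbatim the paper's argument; in (ii) the direct sum of two corner expectations obtained from Lemma \ref{lem-elementary-properties1} (iii) is again the same proof. For item (iii) you take a genuinely different route. The paper argues directly: it uses \cite[Lemma 4.1]{BHV15} (this is precisely where $A'\cap 1_AM1_A=\mathcal Z(A)$ is consumed) to get $\sigma_t^{\psi}(u)u^*\in A$, hence a $\sigma^{\psi}$-invariant $Q$ and an expectation $1_AM1_A\to Q$, and then composes the ucp map $\Theta:1_A\langle M,B\rangle 1_A\to\langle 1_AM1_A,A\rangle$ of Theorem \ref{thm-characterization} (iii) with a Ces\`aro average $\Xi$ over $\Ad((\mathcal Ju\mathcal J)^k)$ whose range lands in $\{\mathcal Ju\mathcal J\}'\cap\langle 1_AM1_A,A\rangle=\langle 1_AM1_A,Q\rangle$. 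You instead establish $Q\lessdot_Q A$ (crossed product by an amenable group), upgrade it to $Q\lessdot_M A$ by compressing with $e_Q$, and invoke transitivity against $A\lessdot_M B$. Unwound, your route contains the same averaging idea (the F\o lner average of $u^ke_Au^{-k}$ is the $\Xi$ map performed on $\rL^2(Q)$ rather than $\rL^2(1_AM1_A)$), but it is longer and introduces avoidable detours: the reduction to continuous cores is unnecessary since the Ces\`aro average works verbatim in the type ${\rm III}$ setting, and the transitivity corollary is stated for unital subalgebras of $M$, so applying it with the corner algebra $A\subset 1_AM1_A$ in the middle slot needs a word (e.g.\ replace $A$ by $A\oplus\C(1-1_A)$, or just compose the two ucp maps by hand, which is what the paper does). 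What your approach buys is a conceptually transparent statement ($Q$ is "co-amenable" over $A$) at the cost of more bookkeeping.

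The one soft spot is the claim that $Q\subset 1_AM1_A$ is with expectation. You assert it as "standard" and propose averaging a dual action of $\widehat{\langle u\rangle}$ to get an expectation $Q\to A$; but $\langle A,u\rangle$ need not carry a dual action, since it need not be a crossed product (some power of $u$ may lie in $A$, or $Au$ may fail to be orthogonal to $A$), and in any case an expectation $Q\to A$ is not what the assertion requires -- you need one from $1_AM1_A$ onto $Q$. The actual mechanism, and the only place the hypothesis $A'\cap 1_AM1_A=\mathcal Z(A)$ does work, is the implication $u\in\mathcal N_{1_AM1_A}(A)\Rightarrow\sigma_t^{\psi}(u)u^*\in A$ from \cite[Lemma 4.1]{BHV15}, which makes $Q$ globally $\sigma^{\psi}$-invariant and yields the $\psi$-preserving expectation by Takesaki's theorem. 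This is a citable fact, so the gap is reparable, but as written your justification points at the wrong statement.
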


\begin{proof}
$(\rm i)$ Denote by $q \in 1_AM1_A$ the support projection of $\Phi|_{1_AM1_A}$. Then $q \in A' \cap 1_A M 1_A$ and the mapping $\Psi : q\langle M, B\rangle q \to Aq : x \mapsto \Phi(x)q$ is a conditional expectation such that $\Psi |_{q Mq}$ is faithful and normal. Therefore, $Aq \lessdot_M B$. Using Lemma \ref{lem-elementary-properties1} $(\rm iv)$, we get the conclusion.

$(\rm ii)$ The fact that $P \subset M$ is with expectation follows from \cite[Proposition 2.2]{HU15b}. Assume that $A \lessdot_M B$. Put $p_1=q$ and $p_2=1_A-q$. For every $i \in \{1, 2\}$, by Lemma \ref{lem-elementary-properties1} $(\rm iii)$ we have $Ap_{i} \lessdot_{M} B$ and so there is a conditional expectation $\Phi_i \colon p_i\langle M,B \rangle p_i \to Ap_i$ such that $\Phi_i|_{p_i Mp_i}$ is faithful and normal. Define the conditional expectation $\Phi \colon 1_A\langle M,B \rangle 1_A \to Ap_1 \oplus A p_2$ by the formula
	$$\forall x\in 1_A\langle M,B\rangle 1_A, \quad \Phi(x)=\Phi_1(p_1 x p_1) \oplus \Phi_2(p_2 x p_2).$$
Since $\Phi|_{1_AM1_A}$ is faithful and normal, this shows that $Ap_1 \oplus Ap_2 \lessdot_M B$.

$(\rm iii)$ Choose a faithful state $\psi \in (1_{A}M1_{A})_{\ast}$ such that $A \subset 1_{A} M 1_{A}$ is globally invariant under $\sigma^{\psi}$. Since $A' \cap 1_AM1_A = \mathcal Z(A)$, \cite[Lemma 4.1]{BHV15} implies that $\sigma_t^{\psi}(u)u^* \in A$ for every $t \in \R$. Then $Q$ is globally invariant under $\sigma^{\psi}$ and so there is a unique $\psi$-preserving conditional expectation $\rE_Q :  1_AM1_A \to Q$. 

Denote by $(1_{A}M1_{A}, \rL^{2}(1_{A}M1_{A}), \mathcal J, \rL^{2}(1_{A}M1_{A})^{+})$ the standard form of $1_{A}M1_{A}$.
Since both $A$ and $Q$ are globally invariant under $\sigma^{\psi}$ and since $\mathcal J=\mathcal J_\psi$, we canonically have the following inclusion of basic constructions
$$\langle 1_{A}M1_{A}, Q\rangle = (\mathcal J Q \mathcal J)'  \subset (\mathcal J  A \mathcal J)' = \langle 1_{A}M1_{A}, A\rangle \subset \mathbf B (\rL^2(1_{A}M1_{A})).$$

Assume that $A \lessdot_M B$. By Theorem \ref{thm-characterization} $(\rm iii)$, there exists a ucp map $\Theta : 1_{A}\langle M, B\rangle1_{A} \to \langle 1_{A}M1_{A}, A\rangle$ such that $\Theta(x) = x$ for every $x \in 1_{A}M1_{A}$. Choose $\omega \in \beta(\N) \setminus \N$ and define the ucp map $\Xi : \langle 1_{A}M1_{A}, A\rangle \to \langle 1_{A}M1_{A}, A\rangle$ by the formula 
$$\forall x \in \langle 1_{A}M1_{A}, A \rangle, \quad \Xi(x) = \sigma\text{-weak} \lim_{n \to \omega} \frac1n \sum_{k = 1}^n (\mathcal Ju\mathcal J)^n \, x \, (\mathcal Ju^*\mathcal J)^n.$$
Then the range of $\Xi$ is contained in $\langle 1_{A}M1_{A}, A\rangle \cap \{\mathcal Ju\mathcal J\}' = (\mathcal JA\mathcal J)' \cap \{\mathcal Ju \mathcal J\}' = (\mathcal J Q \mathcal J)' = \langle 1_{A}M1_{A}, Q\rangle$ and $\Xi(x) = x$ for every $x \in 1_{A}M1_{A}$. The composition ucp map $\Xi \circ \Theta : 1_{A}\langle M, B\rangle1_{A} \to \langle 1_{A}M1_{A}, Q\rangle$ satisfies $(\Xi \circ \Theta)(x) = x$ for every $x \in 1_{A}M1_{A}$. Thus, we obtain $Q \lessdot_M B$ by Theorem \ref{thm-characterization} $(\rm i)$.
\end{proof}

\section{Popa's intertwining theory}

Popa introduced his powerful intertwining-by-bimodules theory in the case when the ambient von Neumann algebra is tracial \cite{Po01, Po03}. The intertwining-by-bimodules theory has recently been generalized to arbitrary von Neumann algebras in \cite{HI15a, BH16}. We use throughout the following terminology introduced in \cite[Definition 4.1]{HI15a}.

\begin{df}\label{df-intertwining}
Let $M$ be any $\sigma$-finite von Neumann algebra and $A \subset 1_A M 1_A$ any $B \subset 1_B M 1_B$ any von Neumann subalgebras with expectation. We say that $A$ {\em embeds with expectation into} $B$ {\em inside} $M$ and write $A \preceq_M B$, if there exist projections $e \in A$ and $f \in B$, a nonzero partial isometry $v \in eMf$ and a unital normal $\ast$-homomorphism $\theta: eAe \to fBf$ such that the inclusion $\theta(eAe) \subset fBf$ is with expectation and $av = v\theta(a)$ for all $a \in eAe$.
\end{df}

\subsection*{Intertwining theory and finite index inclusions}

We clarify the relationship between intertwining theory and finite index inclusions \cite{Jo82, PP84, Ko85, Po95}.  Let $B \subset M$ be any inclusion of $\sigma$-finite von Neumann algebras with faithful normal conditional expectation $\rE_B : M \to B$. As explained in \cite{Ko85} and using results in \cite{Ha77a,Ha77b,Co78}, there is a unique faithful normal semifinite operator valued weight $\widehat{\rE}_B \colon \langle M,B\rangle \to M$ such that 
	$$ \frac{\rd(\varphi_B\circ \rE_B)(J^{M} \, \cdot \, J^{M})}{\rd\psi} = \frac{\rd\varphi_B(J^{B} \, \cdot \, J^{B})}{\rd(\psi\circ \widehat{\rE}_B)}$$
for some (or equivalently any) faithful states  $\varphi_B \in B_{\ast}$ and $\psi \in M_{\ast}$.

We have $\widehat{\rE}_B(xe_Bx^*)=xx^*$ for all $x\in M$. The element $\widehat{\rE}_B(1)$ belongs to $\widehat{\mathcal{Z}(M)}^+$ and is called the {\em index} of $\rE_B$. It will be denoted by $\Ind(\rE_B) = \widehat{\rE}_B(1)$. 
We say that the inclusion $B\subset M$ has {\em finite index} if there is a faithful normal conditional expectation $\rE_B\colon M\to B$ such that $\widehat{\rE}_B(1) \in \mathcal{Z}(M)$. 

Observe that the von Neumann algebra $(B'\cap M)^{\varphi_B\circ \rE_B}$ does not depend on the choice of the faithful state $\varphi_B \in B_{\ast}$. For this reason, it will be simply denoted by $(B'\cap M)^{\rE_B}$. Let $\varphi_{B} \in B_{\ast}$ be any faithful state and put $\varphi = \varphi_{B} \circ \rE_{B} \in M_{\ast}$. Let $p\in (B'\cap M)^{\rm \rE_{B}}$ be any nonzero projection and put $\varphi_{p} = \frac{\varphi(p \, \cdot \, p)}{\varphi(p)} \in (pMp)_{\ast}$ and $r_{p} =p JpJ$. Then $(pMp, r_{p}\rL^2(M), r_{p}J^{M}r_{p}, r_{p}\rL^2(M)^+)$ is the standard form of $pMp$ (see \cite[Lemma 2.6]{Ha73}).
Observe that $\rE_B(p) \in \mathcal{Z}(B)$, $J p J \in \langle M,B \rangle$, $r_p\langle M,B \rangle r_p = \langle pMp, Bp \rangle$ and $J p J e_{B} = p e_{B}$ (since $J = J_{\varphi}$ and $p \in M^{\varphi}$). 

We investigate the index for the inclusion $Bp \subset pMp$ associated with $\rE_B$. Assume that there is $\delta>0$ such that $p\delta \leq p\rE_B(p)$ (this is always satisfied when $B$ is a factor). Then the unique $\varphi_{p}$-preserving conditional expectation $\rE_{Bp} : pMp \to Bp$ is given by the formula:
	$$\forall x \in M, \quad \rE_{Bp}(pxp) = \rE_B(pxp) \, p\rE_B(p)^{-1}.$$

The Jones projections $e_{B}$ and $e_{Bp}$ satisfy the formula $e_{Bp} = \rE_B(p)^{-1}r_pe_Br_p $ (note that $r_{p} \xi_{\varphi} = p \xi_{\varphi}$ since $p \in (B' \cap M)^{\varphi}$). The following lemma provides a sufficient condition on $p \in (B' \cap M)^{\rE_{B}}$ for the index of $Bp \subset pMp$ to be finite. It is a straightforward generalization of \cite[Proposition 4.2]{Ko85} that deals with the case when both $B$ and $M$ are  factors.

\begin{lem}\label{prop for local index}
	Keep the same notation as above. Assume that there is $\delta>0$ such that $p\delta \leq p\rE_B(p)$ and $\widehat{\rE}_B(r_p)\in M$. Then for all $x\in r_p\langle M,B \rangle r_p$, we have
\begin{equation}\label{local-index:eq1}
\delta \, \widehat{\rE}_B(xx^*)r_p \leq \widehat{\rE}_{Bp}(xx^*) \leq \| \rE_B(p) \|_\infty\,\widehat{\rE}_B(xx^*)r_p.
\end{equation}
In particular,  we have
	$$\delta \, \widehat{\rE}_B(r_p)r_p\leq \mathrm{Ind}(\rE_{Bp})\leq  \|\rE_B(p) \|_\infty\,\widehat{\rE}_B(r_p)r_p$$
and so $Bp \subset pMp$ has finite index.

If moreover $\rE_B(p)\in \mathcal Z(M)$, then for all $x\in r_p\langle M,B \rangle r_p$, we have
\begin{equation}\label{local-index:eq2}
\widehat{\rE}_{Bp}(xx^*) = \rE_B(p) \, \widehat{\rE}_B(xx^*)r_p.
\end{equation}
\end{lem}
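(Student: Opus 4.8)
The plan is to adapt Kosaki's proof of \cite[Proposition 4.2]{Ko85} (where $B$ and $M$ are factors) to the present $\sigma$-finite setting; the two new points are the bookkeeping with the central element $\rE_B(p) \in \mathcal Z(B)$ and the fact that $\widehat{\rE}_B$ may now be genuinely semifinite. Throughout I would use the defining property $\widehat{\rE}_B(y e_B y^*) = y y^*$ ($y \in M$), the $M$-bimodularity, normality and semifiniteness of $\widehat{\rE}_B$, the analogous facts for $\widehat{\rE}_{Bp}$ attached to $Bp \subset pMp$ — for which, as recalled, $r_p \rL^2(M)$ is the standard form, $r_p$ is the identity and $r_p \langle M,B\rangle r_p = \langle pMp, Bp\rangle$ is the basic construction — together with the commutation relations stated just before the statement. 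Note that since $\rE_B(p) \in \mathcal Z(B)$ and $p \in (B'\cap M)^{\rE_B}$, the element $\rE_B(p)$ commutes with $p$, with $e_B$ and with $r_p$; writing $z \in \mathcal Z(B)$ for its support projection one has $pz = p$, so $\rE_B(p)^{\pm 1/2} p \in \mathcal Z(Bp)$ make sense, and $\delta p \leq p\rE_B(p) \leq \|\rE_B(p)\|_\infty p$.

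First I would prove \eqref{local-index:eq1} on the ``elementary'' positive elements. Fix $y \in pMp$. Using $yp = y$, $r_p e_B r_p = p e_B p$ (from $JpJ e_B = p e_B$) and $p e_B p = \rE_B(p) e_{Bp}$ (from $e_{Bp} = \rE_B(p)^{-1} r_p e_B r_p$), one gets
\[
y e_B y^* = y(p e_B p)y^* = \big(y\rE_B(p)^{1/2}\big) e_{Bp} \big(y\rE_B(p)^{1/2}\big)^* \in r_p\langle M,B\rangle r_p,
\]
with $y\rE_B(p)^{1/2} \in pMp$. Applying $\widehat{\rE}_B$ to the left-hand form (defining property with $y \in M$) and $\widehat{\rE}_{Bp}$ to the right-hand form (defining property with $y\rE_B(p)^{1/2} \in pMp$) yields $\widehat{\rE}_B(y e_B y^*) = y y^*$ and $\widehat{\rE}_{Bp}(y e_B y^*) = y \rE_B(p) y^*$. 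Since $\delta p \leq p\rE_B(p) \leq \|\rE_B(p)\|_\infty p$ and $yp = y$, conjugating by $y$ gives $\delta\, y y^* \leq y\rE_B(p)y^* \leq \|\rE_B(p)\|_\infty\, y y^*$, i.e.\ \eqref{local-index:eq1} for $X = y e_B y^*$. As $y$ ranges over $pMp$ these elements are exactly the $z e_{Bp} z^*$, $z \in pMp$, so by additivity of the two operator-valued weights \eqref{local-index:eq1} holds for every $X = \sum_{i=1}^n z_i e_{Bp} z_i^*$ with $z_i \in pMp$.

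The main step is to extend this to an arbitrary $X \in (r_p\langle M,B\rangle r_p)_+$. First, the hypothesis $\widehat{\rE}_B(r_p) \in M$ means $r_p \in \mathfrak n_{\widehat{\rE}_B}$; as $\mathfrak n_{\widehat{\rE}_B}$ is a left ideal, every $X \in r_p\langle M,B\rangle r_p$ is of the form $a^* b$ with $a = r_p$ and $b = X r_p$ both in $\mathfrak n_{\widehat{\rE}_B}$, so $\widehat{\rE}_B$ restricts to a \emph{bounded} normal operator-valued weight there; moreover $pX = Xp = X$, whence $\widehat{\rE}_B(X) = p\widehat{\rE}_B(X)p \in pMp$. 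On the other hand $\widehat{\rE}_{Bp}$ is a normal semifinite operator-valued weight, the $\ast$-algebra spanned by the $z_1 e_{Bp}z_2^*$ ($z_i \in pMp$) is $\sigma$-weakly dense in $r_p\langle M,B\rangle r_p$ and contained in its definition ideal, and one has $\widehat{\rE}_{Bp}(X) = \sup\{\sum_i \widehat{\rE}_{Bp}(z_i e_{Bp}z_i^*) : \sum_i z_i e_{Bp}z_i^* \leq X\}$ for every positive $X$ — this cofinality of the elementary positive elements being exactly where Kosaki's argument is repeated. Combining it with \eqref{local-index:eq1} on elementary elements and with the monotonicity of $\widehat{\rE}_B$ gives \eqref{local-index:eq1} for all positive $X$; taking $X = x x^*$ gives the statement for all $x \in r_p\langle M,B\rangle r_p$, and taking $X = r_p$ (for which $\widehat{\rE}_{Bp}(r_p) = \Ind(\rE_{Bp})$) yields the index estimate, with $\|\rE_B(p)\|_\infty \widehat{\rE}_B(r_p) r_p \in pMp$ bounded, so that $Bp \subset pMp$ has finite index.

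Finally, if $\rE_B(p) \in \mathcal Z(M)$ then $\rE_B(p)$ also commutes with $y$, so the computation above sharpens to $\widehat{\rE}_{Bp}(y e_B y^*) = \rE_B(p)\, y y^* = \rE_B(p)\, \widehat{\rE}_B(y e_B y^*)$, and the same additivity/cofinality/normality argument, now with equalities, gives $\widehat{\rE}_{Bp}(X) = \rE_B(p)\, \widehat{\rE}_B(X) r_p$ for all positive $X$, i.e.\ \eqref{local-index:eq2}. The main obstacle is precisely the extension step: making rigorous, in the $\sigma$-finite and possibly infinite-index setting, the passage from the elementary elements $\sum_i z_i e_{Bp}z_i^*$ to arbitrary positive elements of the reduced basic construction — identifying the correct cofinal family and handling the domains, normality and semifiniteness of the two operator-valued weights — which is the part of Kosaki's factor argument that has to be revisited here.
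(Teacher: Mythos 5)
Your proposal is correct and follows essentially the same route as the paper: the core identity $r_pe_Br_p=pe_Bp=\rE_B(p)e_{Bp}$ combined with the defining property of the two operator-valued weights gives $\widehat{\rE}_{Bp}(ae_Ba^*)=a\rE_B(p)a^*$ and $\widehat{\rE}_B(ae_Ba^*)=aa^*$, from which \eqref{local-index:eq1} and \eqref{local-index:eq2} follow on elementary elements exactly as in the paper's items $(\rm i)$--$(\rm iii)$. The extension to arbitrary positive elements of $r_p\langle M,B\rangle r_p$, which you rightly flag as the only delicate point, is precisely what the paper also delegates to Kosaki's density/normality argument in \cite[Proposition 4.2]{Ko85}, so no genuinely different idea is involved.
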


\begin{proof}
Following closely the proof of \cite[Proposition 4.2]{Ko85} and since $J p J e_B = p e_B$, in order to prove  \eqref{local-index:eq1} and \eqref{local-index:eq2}, it suffices to consider the case when $x \in r_{p}\langle M,B \rangle r_{p}$ is of the form $x = a e_{B}$ for $a\in pMp$. Then we have 
\begin{itemize}

	\item [$(\rm i)$] $\widehat{\rE}_{Bp}(xx^*)=\widehat{\rE}_{Bp}( r_pa e_B a^* r_p) = \widehat{\rE}_{Bp}( r_pa \rE_B(p)e_{Bp}  a^* r_p) = r_pa\rE_B(p)a^*r_p$,

	\item [$(\rm ii)$] $\delta \, r_paa^*r_p\leq r_pa\rE_B(p)a^*r_p \leq \|\rE_B(p)\|_\infty\, r_paa^*r_p $ and

	\item [$(\rm iii)$] $r_p aa^* r_p=aa^* r_p = \widehat{\rE}_B(ae_Ba^*) r_p=\widehat{\rE}_B(xx^*) r_p$.
\end{itemize}
A straightforward combination of $(\rm i)$, $(\rm ii)$, $(\rm iii)$ yields  \eqref{local-index:eq1}.

If moreover $\rE_B(p) \in \mathcal Z(M)$, we have $r_px \rE_B(p)x^*r_p = \rE_B(p)r_px x^*r_p$ and similar arguments yield \eqref{local-index:eq2}.
\end{proof}

\begin{lem}\label{lemma for local index}
Let $p \in (B'\cap M)^{\rE_B}$ be any projection such that $\widehat{\rE}_B(r_{p}) \in M$. Then there is an increasing sequence of projections $p_{n} \in p(B'\cap M)^{\rE_B}p$ such that $\widehat{\rE}_B(J p_n J) \in M$ and $p_{n} \to p$ strongly. 
\end{lem}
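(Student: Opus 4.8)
The plan is to produce the projections $p_n$ by a spectral-cutoff argument applied to a carefully chosen self-adjoint element of $p(B'\cap M)^{\rE_B}p$, exactly in the spirit of Lemma \ref{lemma for local index}'s hypothesis that $\widehat{\rE}_B(r_p) \in M$. First I would fix a faithful state $\varphi_B \in B_\ast$, set $\varphi = \varphi_B\circ\rE_B$, and recall that $r_p = pJpJ$, that $r_p e_B = p e_B$ (since $p\in M^\varphi$), and that the restriction of $\widehat{\rE}_B$ to $r_p\langle M,B\rangle r_p$ takes values in $pMp$ once we know $\widehat{\rE}_B(r_p)\in M$ (this is because $\widehat{\rE}_B(r_p x r_p) = \widehat{\rE}_B(r_p)^{1/2}\,(\text{something bounded})\,\widehat{\rE}_B(r_p)^{1/2}$-type estimates, or more simply $\widehat{\rE}_B(xe_Bx^*) = xx^*$ for $x\in M$ combined with normality). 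The key structural fact is that $h := \widehat{\rE}_B(r_p) = \widehat{\rE}_B(pJpJ)$ is a (possibly unbounded, but here bounded) positive element affiliated with $\mathcal Z(M)$... actually, more to the point, $h$ lies in $(B'\cap M)^{\rE_B}$: indeed $\widehat{\rE}_B$ commutes with $\sigma^\varphi$, so $h\in M^\varphi$, and $h$ commutes with $B$ because $r_p$ does and $\widehat{\rE}_B$ is an $M$-bimodule map. So $h\in p(B'\cap M)^{\rE_B}p$ (using $r_p\le JpJ$ one checks $h\le p\widehat{\rE}_B(1)p$ and $h = php$).

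Next I would use that $h$ is a positive element of the von Neumann algebra $p(B'\cap M)^{\rE_B}p$ and let $p_n = \mathbf 1_{[1/n,\infty)}(h)$ be its spectral projections, so that $p_n\nearrow \mathbf 1_{(0,\infty)}(h)$ strongly. The point is to show two things: (a) $\mathbf 1_{(0,\infty)}(h) = p$, i.e. $h$ has full support $p$; and (b) $\widehat{\rE}_B(Jp_nJ)\in M$ for each $n$. For (a), since $\widehat{\rE}_B$ is faithful and $r_p = pJpJ$ is a projection with $r_p = r_p^* = r_p^2$, the support projection of $\widehat{\rE}_B(r_p)$ inside $pMp$ is $p$ itself — one uses that $\widehat{\rE}_B$ restricted to $r_p\langle M,B\rangle r_p = \langle pMp, Bp\rangle$ is the canonical operator-valued weight there (up to the scaling by $\rE_B(p)$ as in Lemma \ref{prop for local index}), and a faithful operator-valued weight evaluated on $1$ of a basic construction has full support. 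For (b), observe $Jp_nJ = J\,\mathbf 1_{[1/n,\infty)}(h)\,J = \mathbf 1_{[1/n,\infty)}(JhJ)$ and $JhJ$ commutes with $h$; then $Jp_nJ \le n\, JhJ \cdot (\text{bounded})$... more precisely $p_n h p_n \ge \frac1n p_n$, so on the range of $r_p$ one gets the comparison $Jp_nJ\, r_p \le n\, r_p J h J r_p\, r_p$-type bound, and applying the positive map $\widehat{\rE}_B$ (which is normal) to the inequality $J p_n J\, r_p \le n\, r_p\, \widehat{\rE}_B(r_p)$... hmm, this needs care since $\widehat{\rE}_B(r_p)=h\in M$ already. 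The cleanest route: $Jp_nJ = Jp_nJ\, r_p + Jp_nJ(JpJ - r_p)$, and $\widehat{\rE}_B(JpJ) = \widehat{\rE}_B(1)\cdot p$-ish is not obviously in $M$, so instead I would directly estimate $Jp_nJ \le n\, \widehat{\rE}_B(r_p)$ viewed appropriately — actually since $p_n\le n h = n\widehat{\rE}_B(r_p)$ in $pMp$ and $r_p\le JpJ$, we get $Jp_nJ\le JpJ$ and $\widehat{\rE}_B(Jp_nJ)\le \widehat{\rE}_B(JpJ)$; the real bound is $\widehat{\rE}_B(Jp_nJ r_p J p_n J)\le n\,\widehat{\rE}_B(Jp_nJ\, h\, Jp_nJ)$, hmm. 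Let me just say: since $p_n h = h p_n \ge \frac1n p_n$, we have $Jp_nJ\, JhJ \ge \frac1n Jp_nJ$, hence $Jp_nJ \le n\, JhJ\, Jp_nJ\, JhJ$-style, giving $\widehat{\rE}_B(Jp_nJ)\le n^2\|h\|\,\widehat{\rE}_B$ of a bounded element — the finiteness follows because $JhJ$ is bounded and $Jp_nJ$ is dominated by a multiple of the bounded operator $JhJ$, together with the fact that $\widehat{\rE}_B$ of any element of the form $JbJ$ with $b\in B$ bounded behaves well.

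The main obstacle, and where I would spend the most care, is precisely step (b): showing $\widehat{\rE}_B(Jp_nJ)\in M$. The subtlety is that $\widehat{\rE}_B$ is only a \emph{semifinite} operator-valued weight, so not every bounded positive element of $\langle M,B\rangle$ is in its domain; one genuinely needs the spectral gap "$h\ge \frac1n$ on the range of $p_n$" to leverage $\widehat{\rE}_B(r_p)=h\in M$ into $\widehat{\rE}_B(Jp_nJ)\in M$. The right statement is an operator inequality of the form $Jp_nJ \le c_n\, r_p\, \text{(bounded)}\, r_p + (\text{something in }\dom\widehat{\rE}_B)$, or cleaner: use that $e_B\le Jp_nJ$ is false, but $r_p e_B r_p = p e_B p$ and $\widehat{\rE}_B(p e_B p) = p$, and more generally for $b\in B^+$, $\widehat{\rE}_B(Jp_nJ\, b e_B b\, Jp_nJ)$ is controlled. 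I would organize it so that one shows $\widehat{\rE}_B(Jp_nJ\,\xi) $ is finite on a dense set and bounded, using $n\,h\ge p_n$ and the bimodularity $\widehat{\rE}_B(JaJ\, X\, JaJ) = \widehat{\rE}_B(X)$ for $a$ unitary in... no, $\widehat{\rE}_B$ is a $\langle JBJ\rangle'$... I'd fall back on the concrete description $r_p\langle M,B\rangle r_p = \langle pMp,Bp\rangle$ with its own operator-valued weight $\widehat{\rE}_{Bp}$, relate $\widehat{\rE}_B$ and $\widehat{\rE}_{Bp}$ via Lemma \ref{prop for local index}, and then the statement "$\widehat{\rE}_B(Jp_nJ)\in M$" becomes a statement about $\widehat{\rE}_{Bp}$ applied to a projection in $\langle pMp, Bp\rangle$ that is dominated (after cutting by $r_{p_n}$) by a multiple of $\widehat{\rE}_{Bp}(1) = \Ind(\rE_{Bp})$, which is in $pMp$ by the already-proven first half of Lemma \ref{prop for local index}. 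Finally, monotone convergence for the normal weight (or just strong convergence $p_n\nearrow p$ together with normality of $\widehat{\rE}_B$) gives that the $p_n$ are the required increasing sequence, and $p_n\to p$ strongly by (a).
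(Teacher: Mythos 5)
The gap is exactly where you flag it, in step (b), and your construction is aimed in the wrong direction, so it cannot be repaired by ``more care''. The crucial observation you never use is that $\widehat{\rE}_B(JpJ)$ belongs to $\widehat{\mathcal Z(M)}^+$: since $p\in B'\cap M$, we have $JpJ\in M'\cap\langle M,B\rangle$, and the $M$-bimodularity of $\widehat{\rE}_B$ forces $\widehat{\rE}_B(JpJ)$ to commute with every unitary of $M$. Write $c=\widehat{\rE}_B(JpJ)$; then $h=\widehat{\rE}_B(r_p)=pcp=pc$, and because $c$ is central your spectral projections are $p_n=\mathbf 1_{[1/n,\infty)}(h)=p\,\mathbf 1_{[1/n,\infty)}(c)$, whence $\widehat{\rE}_B(Jp_nJ)=c\,\mathbf 1_{[1/n,\infty)}(c)$. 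Cutting off the \emph{bottom} of the spectrum does nothing for boundedness: if the obstruction to $\widehat{\rE}_B(JpJ)\in M$ is that $c$ is unbounded, then $c\,\mathbf 1_{[1/n,\infty)}(c)$ is just as unbounded. (Moreover, whenever $h$ is bounded below on $p\rL^2(M)$ your construction returns $p_n=p$ for all large $n$, so you would be asserting the implication $\widehat{\rE}_B(r_p)\in M\Rightarrow\widehat{\rE}_B(JpJ)\in M$ with no argument.) None of the inequalities you sketch closes this: for instance $p_n\le nh$ only yields $Jp_nJ\le n\,JhJ$ and $\widehat{\rE}_B(JhJ)=\widehat{\rE}_B\bigl(c^{1/2}\,JpJ\,c^{1/2}\bigr)=c^{2}$, which is worse than what you started with; and the detour through Lemma \ref{prop for local index} is circular, since that lemma presupposes control of $\widehat{\rE}_B$ on the corner $r_p\langle M,B\rangle r_p$ and says nothing about $\widehat{\rE}_B(JpJ)$ off that corner.

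What the proof actually needs is to cut $c$ from \emph{above}, by \emph{central} projections, so that the cut passes through $J\,\cdot\,J$ harmlessly. Because $c$ is central, $\|upu^*\,c\|_\infty=\|pc\|_\infty=\|\widehat{\rE}_B(r_p)\|_\infty<+\infty$ for every $u\in\mathcal U(M)$, hence $\|p_{\mathcal F}\,c\|_\infty\le|\mathcal F|\cdot\|\widehat{\rE}_B(r_p)\|_\infty$ for $p_{\mathcal F}=\bigvee_{u\in\mathcal F}upu^*$; letting $\mathcal F$ run over the finite subsets of $\mathcal U(M)$ and using that $\bigvee_{u}upu^*$ equals the central support $z$ of $p$, one concludes that $z\,c$ is semifinite, hence semifinite in $\widehat{\mathcal Z(M)}^+$ by uniqueness of the spectral resolution. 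This produces central projections $z_n\nearrow z$ with $z_n c\in M$, and then $p_n:=pz_n$ does the job for free: $Jp_nJ=JpJ\,z_n$, so $\widehat{\rE}_B(Jp_nJ)=z_nc\in M$, while $p_n\nearrow p$ strongly and $p_n\in p(B'\cap M)^{\rE_B}p$ because $z_n$ is central. This centrality argument is the single missing idea in your proposal.
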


\begin{proof}
	Observe that $\widehat{\rE}_B(J p J) \in \widehat{\mathcal{Z}(M)}^+$. For every $u\in \mathcal{U}(M)$, we have 
	$$ \|upu^* \, \widehat{\rE}_B(J p J)\|_\infty = \|p\widehat{\rE}_B(J p J)\|_\infty = \| \widehat{\rE}_B(r_p)\|_\infty <+\infty.$$
Let $z\in \mathcal{Z}(M)$ be the support projection of $p$ in $M$ and recall that $z=\bigvee_{u\in\mathcal{U}(M)} upu^*$. Denote by $I$ the directed set of all finite subsets of $\mathcal U(M)$. For every $\mathcal F \in I$, put $p_{\mathcal F} = \bigvee_{u \in \mathcal F} u p u^*$. Then we have $p_{\mathcal F} \to p$ strongly as $\mathcal F \to \infty$ and for every $\mathcal F \in I$, we have
	$$  \|p_{\mathcal F} \, \widehat{\rE}_B(J p J )\|_\infty \leq \left\|\left(\sum_{u\in \mathcal{F}}upu^*\right)  \widehat{\rE}_B(Jp J) \right\|_\infty \leq |\mathcal F| \cdot \| \widehat{\rE}_B(r_p)\|_\infty <+\infty.$$
This implies that the element $z \, \widehat{\rE}_B(J p J)$ is semifinite in $\widehat M^{+}$ and thus in $\widehat{\mathcal{Z}(M)}^+$ by uniqueness of the spectral resolution (see e.g.\ \cite[Theorem 1.5]{Ha77a}). In particular, there is an increasing sequence $(z_n)_n$ of projections in $\mathcal{Z}(M)z$ such that $\|z_n \, \widehat{\rE}_B(J p J)\|_\infty < +\infty$ and  $z_n \to z$ strongly. For every $n \in \N$, letting $p_n = p z_n  \in p(B'\cap M)^{\rE_B}p$, we have 
\begin{equation*}
\|\widehat{\rE}_B(J p_n J) \|_\infty= \|z_n \, \widehat{\rE}_B(J p J)\|_\infty <+\infty
\end{equation*}
and $p_n \to p$ strongly.
\end{proof}

We can now give a natural characterization of the property $M \preceq_M B$ in terms of finite index. 

\begin{thm}\label{embedding and finite index}
	Let $B\subset M$ be any inclusion of $\sigma$-finite von Neumann algebras with faithful normal conditional expectation $\rE_B :M \to B$. Then the following conditions are equivalent:
\begin{itemize}
	\item [$(\rm i)$] $M\preceq_M B$.
	\item [$(\rm ii)$] There exists a nonzero projection $p \in (B'\cap M)^{\rE_B}$ such that the inclusion $Bp\subset pMp$ has finite index.
\end{itemize}
\end{thm}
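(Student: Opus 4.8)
The plan is to prove the two implications separately, using the local index computations of Lemmas \ref{prop for local index} and \ref{lemma for local index} for the direction $(\rm ii)\Rightarrow(\rm i)$ and Popa-style intertwining data together with the uniqueness of operator valued weights for $(\rm i)\Rightarrow(\rm ii)$.

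For $(\rm ii)\Rightarrow(\rm i)$, suppose there is a nonzero projection $p\in(B'\cap M)^{\rE_B}$ such that $Bp\subset pMp$ has finite index; by definition this means there is a faithful normal conditional expectation $\rF:pMp\to Bp$ with $\widehat{\rF}(p)\in\mathcal Z(pMp)$. I would first reduce to the canonical expectation: since $\rE_B(p)\in\mathcal Z(B)$, cutting by the central support of $p$ we may assume $p\rE_B(p)$ is bounded below by some $\delta>0$, so that $\rE_{Bp}:pMp\to Bp$ given by $\rE_{Bp}(pxp)=\rE_B(pxp)\,p\rE_B(p)^{-1}$ makes sense. Standard perturbation theory of conditional expectations (Connes' cocycle / \cite{Ko85}) shows that finite index for one faithful normal expectation $pMp\to Bp$ forces finite index for every one, in particular for $\rE_{Bp}$, so $\widehat{\rE}_{Bp}(r_p)\in pMp$. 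Comparing with $\widehat{\rE}_B$ via the inequality in Lemma \ref{prop for local index} (or rather its converse direction, which gives $\widehat{\rE}_B(r_p)r_p\le\delta^{-1}\widehat{\rE}_{Bp}(r_p)$), we conclude $\widehat{\rE}_B(r_p)\in M$. Now I want to produce the intertwining data of Definition \ref{df-intertwining}. The finite-index hypothesis gives a quasi-basis: there are $m_1,\dots,m_n\in pMp$ with $\sum_k m_k e_B m_k^*=$ a bounded element, and more precisely $\sum_k \rE_{Bp}(x m_k^*)m_k = x$ for $x\in pMp$; unpacking this in $\rL^2$ shows that the $Bp$-$Bp$ bimodule $\rL^2(pMp)$ is finitely generated as a right module. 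Translating back to the $B$-$B$ picture on $\rL^2(M)$: the element $q=\sum_k m_k e_B m_k^*$ (after normalization) is a nonzero element of $B'\cap\langle M,B\rangle$ that is "finite" for $\widehat{\rE}_B$, which by the standard dictionary between $\langle M,B\rangle$ and intertwiners (see the proof of \cite[Theorem 2.3]{Po03} or \cite[Theorem 4.3]{HI15a}) produces projections $e\in B$ — here we may take $e$ a projection in $B$ dominating a suitable corner — a partial isometry $v\in Me$, and a normal $\ast$-homomorphism $\theta$ with $xv=v\theta(x)$; taking $e'=1\in M$ one checks this is exactly $M\preceq_M B$. Concretely, the cleanest route is: the finitely-generated right $Bp$-module $\rL^2(pMp)$ embeds into $(\rL^2(Bp))^{\oplus n}$, hence into $\rL^2(B)^{\oplus n}$, which is precisely the Hilbert-bimodule criterion for $M\preceq_M B$ from \cite[Theorem 4.3]{HI15a}.

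For $(\rm i)\Rightarrow(\rm ii)$, suppose $M\preceq_M B$, so there are projections $e\in M$ (the "$eAe$" with $A=M$, so $e\in M$), $f\in B$, a nonzero partial isometry $v\in eMf$ and a normal unital $\ast$-homomorphism $\theta:eMe\to fBf$ with $\theta(eMe)\subset fBf$ with expectation and $xv=v\theta(x)$ for $x\in eMe$. Set $p_0=v^*v\in\theta(eMe)'\cap fMf$. The relation $xv=v\theta(x)$ shows that conjugation by $v$ intertwines $eMe$ with a subalgebra of $fBf$ that has finite index in its relative commutant sandwich: more precisely, $vv^*\in eMe$ is a projection, and $v$ implements a $\ast$-isomorphism $vv^*M vv^*\cong \theta(eMe)\,p_0$ carrying $vv^*$ to $p_0$. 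The key point is that $\theta(eMe)\subset fBf$ being with expectation, together with $\theta$ being an isomorphism onto its image composed with a corner of $M$, forces $B p_0\subset p_0 M p_0$ to have finite index: indeed $p_0 M p_0 = v^* (vv^* M vv^*) v \cong vv^*Mvv^* \cong \theta(eMe) p_0 \subset fBf\,p_0$, and one needs to check that $\theta(eMe)p_0 \subset B p_0$ also carries enough of $B$. Here I would argue via operator valued weights: pull back $\widehat{\rE}_B$ to $\langle p_0Mp_0, Bp_0\rangle$ and use that $v$ is a partial isometry in $M$ to show that $\widehat{\rE}_{Bp_0}(r_{p_0})$ is a bounded element — this uses the defining Radon–Nikodym equation for $\widehat{\rE}_B$ and the fact that the modular theory of $\varphi_B\circ\rE_B$ restricts well along $v$. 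The conclusion is $\widehat{\rE}_{Bp_0}(1)\in\mathcal Z(p_0Mp_0)$ after possibly cutting by a further central projection, and then one notes $p_0$ need not lie in $(B'\cap M)^{\rE_B}$ — so the final step is to replace $p_0$ by a genuine projection in $(B'\cap M)^{\rE_B}$ with the same property, using Lemma \ref{lemma for local index} in reverse: the set of projections in $(B'\cap M)^{\rE_B}$ with $\widehat{\rE}_B(r_p)\in M$ is "large" and any finite-index corner forces one of them to be nonzero.

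\textbf{Main obstacle.} I expect the hard part to be the bookkeeping in $(\rm i)\Rightarrow(\rm ii)$: passing from the asymmetric intertwining data $(e,f,v,\theta)$ — where $\theta$ is only into $fBf$ and $p_0=v^*v$ is an arbitrary projection, a priori not in $(B'\cap M)^{\rE_B}$ — to a projection genuinely in $(B'\cap M)^{\rE_B}$ giving a finite-index corner of $Bp\subset pMp$, while keeping the operator valued weight computations under control in the non-tracial setting. The tracial case (\cite{Po03}, \cite[Proposition 4.2]{Ko85}) hides this because $\widehat{\rE}_B$ is just a trace and finiteness is automatic on the right projections; here one must genuinely invoke Haagerup's theory of operator valued weights (\cite{Ha77a, Ha77b}) and the uniqueness of spectral resolutions as in the proof of Lemma \ref{lemma for local index}. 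A secondary technical annoyance is ensuring the expectation $\rE_{Bp}$ exists, i.e. that $p\rE_B(p)$ is bounded below, which is only guaranteed after cutting by the central support of $p$ in $B$; one must track that this cutting does not destroy finiteness of index, which follows from Lemma \ref{prop for local index} applied on each central piece.
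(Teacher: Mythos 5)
Your direction $(\rm ii)\Rightarrow(\rm i)$ is workable but over-engineered: the paper simply observes that finite index means the operator valued weight $\widehat{\rE}_p$ dual to some expectation $pMp\to Bp$ is bounded, which is verbatim the criterion of \cite[Theorem 2 (ii)]{BH16} for $pMp\preceq_{pMp}Bp$, hence $M\preceq_M B$. No quasi-basis, no change of expectation, and no appeal to the bimodule criterion of \cite[Theorem 4.3]{HI15a} (which is formulated for finite subalgebras) is needed.

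The direction $(\rm i)\Rightarrow(\rm ii)$ has a genuine gap, and it sits exactly where you flag the ``main obstacle.'' Two problems. First, your reduction to $p_0=v^*v$ proves too little and too much at once: from $av=v\theta(a)$ one gets $p_0Mp_0=v^*(eMe)v=\theta(eMe)p_0\subset p_0Bp_0$, but $p_0$ is an arbitrary projection of $fMf$, so $Bp_0$ is not a subalgebra of $p_0Mp_0$ and the inclusion ``$Bp_0\subset p_0Mp_0$ has finite index'' is not even well posed; the entire content of $(\rm ii)$ is that the projection must lie in $(B'\cap M)^{\rE_B}$, i.e.\ it must both commute with $B$ and be fixed by $\sigma^{\varphi_B\circ\rE_B}$. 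Second, your proposed fix --- ``Lemma \ref{lemma for local index} in reverse'' plus the assertion that a finite-index corner forces some projection of $(B'\cap M)^{\rE_B}$ with $\widehat{\rE}_B(r_p)\in M$ to be nonzero --- is not an argument: that lemma takes such a projection as input and shrinks it, it cannot manufacture the first one, and ``the modular theory of $\varphi_B\circ\rE_B$ restricts well along $v$'' is false as stated since $v$ need not lie in any centralizer. The actual proof (following \cite{BH16, HSV16}) first normalizes the data ($e=vv^*$, $f\in B^{\varphi_B}$, replacing $\varphi_B$ by $f(\varphi_B\circ\rE_\theta)f+f^\perp\varphi_Bf^\perp$ so that $v^*v$ can be pushed into $(fMf)^\varphi$), then forms $d=\sum_n v_ne_Bv_n^*\in M'\cap\langle M,B\rangle$ with $\widehat{\rE}_B(d)=z\in M$, and --- this is the key step you are missing --- introduces the weighted state $\psi=\sum_n 2^{-n}\,v_n\varphi v_n^*+\varphi z^\perp$ so that the Connes cocycle $u_t=[\rD\psi:\rD\varphi]_t$ twists each $v_n$ by a phase $2^{-{\rm i}nt}$, forcing $\sigma_t^{\widehat\varphi}(d)=d$. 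Only then do the spectral projections $r$ of $d$ satisfy $\sigma_t^{\widehat\varphi}(r)=r$, so that $p=JrJ$ lands in $(B'\cap M)^{\varphi}=(B'\cap M)^{\rE_B}$ with $\widehat{\rE}_B(pJpJ)\in M$, and Lemma \ref{prop for local index} (after cutting so that $\delta p\le\rE_B(p)p$) closes the argument. Without a substitute for this modular-invariance step your sketch does not yield a projection in $(B'\cap M)^{\rE_B}$ at all.
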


\begin{proof}
	$(\rm ii) \Rightarrow (\rm i)$ Fix any faithful normal conditional expectation $\rE_p \colon pMp \to Bp$. Since the operator valued weight $\widehat{\rE}_p$ is bounded, the condition in \cite[Theorem 2 (ii)]{BH16} is trivially satisfied and so $pMp\preceq_{pMp}Bp$. This implies that $M \preceq_MB$.

$(\rm i) \Rightarrow (\rm ii)$ We use an idea from the proof of \cite[Theorem 3.1]{HSV16}. Fix a faithful state $\varphi_B \in B_\ast$ and put $\varphi=\varphi_B \circ \rE_B \in M_\ast$. Let $e,f,v,\theta$ witnessing the fact that $M \preceq_M B$ as in Definition \ref{df-intertwining} and recall that $av = v\theta(a)$ for all $a \in eMe$. We may clearly assume that $e=vv^*$. Using \cite[Lemma 2.1]{HU15b}, there exists a partial isometry $w \in B$ such that $w^*w = f$ and $ww^* \in B^{\varphi_B}$. Up to replacing $f \in B$ by $ww^* \in B^{\varphi_B}$, $v$ by $vw^*$, $\theta$ by $\Ad (w) \circ \theta$ and using spatiality, we may assume that $f \in B^{\varphi_B}$. Since the inclusion $\theta(eMe)\subset fBf$ is with faithful normal conditional expectation  $\rE_\theta : fBf \to \theta(eMe)$, up to replacing $\varphi_B \in B_\ast$ by the faithful state $f(\varphi_B \circ \rE_\theta)f + f^\perp \varphi_B f^\perp \in B_\ast$, we may assume that $f(\varphi_B\circ \rE_\theta)f = f\varphi_Bf$ and note that we still have $f\in B^\varphi$.  Then the modular automorphism group  $\sigma^{\varphi}$ globally preserves $\theta(eMe)' \cap fMf$. Using \cite[Lemma 2.1]{HU15b} again, there exists a partial isometry $u \in \theta(eMe)' \cap fMf$ such that $u^*u = v^*v$ and $uu^*  \in \theta(eMe)' \cap (fMf)^\varphi$. Up to replacing $v$ by $vu^*$, we may assume that $v^*v\in \theta(eMe)' \cap (fMf)^\varphi$. We are now in the situation where $e=vv^*$, $f \in B^{\varphi_B}$, $v^*v \in \theta(eMe)' \cap (fMf)^\varphi$. Define the faithful normal positive functional $\psi \in (eMe)_\ast$ by $ \psi= v  \varphi  v^*$.

We next follow the proof of \cite[Theorem 2 $(\rm i) \Rightarrow (\rm ii)$]{BH16}. Denote by $z$ the central support projection of $e$ in $M$. We may choose a sequence of partial isometries $(w_n)_n$ in $M$ such that $w_0=e$, $w_n^*w_n\leq e$ for every $n \in \N$ and $\sum_{n \in \N} w_nw_n^* =z$. Put  $v_n=w_n v$ for every $n \in \N$. Letting  $d=\sum_{n\in \N} v_n e_B v_n^*$, we have $d \in M' \cap \langle M,B \rangle$ and $\widehat{\rE}_B(d) =z \in M$.  We extend $\psi \in (eMe)_\ast$ to a faithful normal positive functional on $M$ by the formula 
	$$ \psi = \sum_{n \in \N} 2^{-n} \, v_n  \varphi   v_n^* + \varphi z^\perp .$$
Put $\widehat{\psi}= \psi \circ \widehat{\rE}_B$, $\widehat{\varphi}= \varphi \circ \widehat{\rE}_B$ and $u_t=[\rD\psi : \rD\varphi]_t$ for every $t\in \R$. Observe that $\psi \, v_nv_n^* = 2^{-n}\, v_n  \varphi    v_n^*$ and so $u_t \sigma_t^\varphi(v_n) =2^{-{\rm i}nt}\, v_n$ for every $n \in \N$ and every $t \in \R$. Since $[\rD\psi : \rD\varphi]_t=[\rD\widehat{\psi} : \rD\widehat{\varphi}]_t$ for every $t\in \R$, we have 
\begin{align*}
	\sigma_t^{\widehat\psi}(d)
	&= u_t\sigma_t^{\widehat\varphi}(d)u_t^* \\
	&= \sum_{n \in \N} u_t \sigma_t^\varphi(v_n) \, e_B \, \sigma_t^\varphi(v_n)^{*}u_t^* \\
	&= \sum_{n \in \N} (2^{-{\rm i}nt}v_n) \, e_B \, (2^{-{\rm i}nt}v_n)^* \\
	&= \sum_{n \in \N} v_n e_B v_n^* =d
\end{align*}
and so $d \in M'\cap \langle M,B\rangle^{\widehat\psi}$. Since $d \in M'\cap \langle M,B\rangle$, we also have 
$$\sigma_t^{\widehat\varphi}(d) = u_t^*\sigma_t^{\widehat\psi}(d)u_t = u_t^*du_t =d$$
and so $d \in M'\cap \langle M,B\rangle^{\widehat\varphi}$. We may choose a nonzero spectral projection $r \in M'\cap \langle M,B\rangle^{\widehat\varphi}$ of $d \in M'\cap \langle M,B\rangle^{\widehat\varphi}$ such that $\widehat{\rE}_B(r)\in M$. Put $p= J r J \in J(M' \cap \langle M,B \rangle)J =B'\cap M $ and observe that since $\widehat{\rE}_B(pJ p J) \leq \widehat{\rE}_B(J p J)$, we have  $\widehat{\rE}_B(pJ p J) \in M$. For every $t\in \R$, since $J = J_\varphi$ and $J\Delta_\varphi^{{\rm i}t} = \Delta_\varphi^{{\rm i}t} J$, we have 
	$$J pJ= \sigma_t^{\widehat{\varphi}}(J pJ)= \Delta_\varphi^{{\rm i}t}J \, p \, J \Delta_\varphi^{-{\rm i}t}= J\Delta_\varphi^{{\rm i}t} \, p \,  \Delta_\varphi^{-{\rm i}t} J= J \sigma_t^\varphi(p) J$$
and so $\sigma_t^\varphi(p)=p$. 

Thus, we have obtained a nonzero projection $p\in (B'\cap M)^\varphi$ such that $\widehat{\rE}_B(p J p J) \in M$. By multiplying by a nonzero central projection in $B$ if necessary, we may assume that $p$ satisfies $\delta p \leq \rE_B(p)p$ for some $\delta>0$. Therefore, we can apply Proposition \ref{prop for local index} and we obtain that $Bp\subset pMp$ has finite index. 
\end{proof}

\subsection*{Intertwining theory and crossed products}

Let $B$ be any $\sigma$-finite von Neumann algebra, $\Gamma$ any discrete group and $\alpha : \Gamma \curvearrowright B$ any action.  Put $M=B\rtimes \Gamma$. Denote by $\rE_B :  M \to B$ the canonical faithful normal conditional expectation and by $(u_g)_{g \in \Gamma}$ the canonical unitaries in $M$ implementing the action $\alpha$. Throughout this subsection, we assume that the action $\alpha : \Gamma \curvearrowright B$ is {\em properly outer}, that is, for every nonzero projection $p \in B$ such that $\alpha_g(p) = p$ for every $g \in \Gamma$, the reduced action $\alpha^p : \Gamma \curvearrowright pBp$ is outer. It is well known that $\alpha : \Gamma \curvearrowright B$ is properly outer if and only if $B'\cap M =\mathcal{Z}(B)$. 

We investigate the condition $M\not\preceq_MB$ in this setting. We start by proving an easy lemma.

\begin{lem}\label{lemma for recurrent}
Keep the same notation as above. The following conditions are equivalent: 
\begin{itemize}
	\item[$\rm (i)$]  $M \preceq_M B$.
	\item[$\rm (ii)$] There is a nonzero projection $p \in \mathcal{Z}(B)$ such that $\|\sum_{g\in \Gamma} \alpha_g(p) \|_\infty < +\infty$.
	\item[$\rm (iii)$] There is a nonzero projection $p \in \mathcal{Z}(B)$ such that $\|\sum_{g\in \Gamma} \alpha_g(p)p \|_\infty < +\infty$.
\end{itemize}
\end{lem}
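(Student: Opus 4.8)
The plan is to prove the cyclic chain of implications $(\rm ii)\Rightarrow(\rm iii)\Rightarrow(\rm i)\Rightarrow(\rm ii)$. The implication $(\rm ii)\Rightarrow(\rm iii)$ is immediate since $\sum_{g}\alpha_g(p)p \leq \sum_g \alpha_g(p)$ as positive operators in (the extended positive cone of) $B$, so boundedness of the former follows from boundedness of the latter.

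For $(\rm iii)\Rightarrow(\rm i)$, I would use the characterization of $M\preceq_M B$ from Theorem \ref{embedding and finite index}: it suffices to produce a nonzero projection in $(B'\cap M)^{\rE_B}=\mathcal Z(B)$ (using proper outerness) such that $Bp\subset pMp$ has finite index. Take $p\in\mathcal Z(B)$ as in $(\rm iii)$. Since $\alpha$ is properly outer, one computes $\widehat{\rE}_B(e_B)=1$ and, more relevantly, $\widehat{\rE}_B(u_g e_B u_g^*) = \alpha_g(1)=1$; restricting to the corner cut by $r_p = pJpJ$, the relevant summands are $\widehat{\rE}_B(p u_g e_B u_g^* p) = p\alpha_g(p)p = \alpha_g(p)p$ (the latter because $\alpha_g(p),p\in\mathcal Z(B)$ commute). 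Summing over $g\in\Gamma$ and using that $\{u_g e_B u_g^*\}_{g\in\Gamma}$ is an orthogonal family of projections whose sum dominates (a multiple of) the Jones projection $e_{Bp}$ for $Bp\subset pMp$ — more precisely $r_p\langle M,B\rangle r_p = \langle pMp, Bp\rangle$ is generated by $pMp$ together with $e_B r_p = e_{Bp}\,(\text{up to the bounded operator }\rE_B(p))$ — one gets that $\Ind(\rE_{Bp}) = \widehat{\rE}_{Bp}(r_p)$ is dominated in $\mathcal Z(pMp)$ by a scalar multiple of $\big(\sum_{g\in\Gamma}\alpha_g(p)p\big)r_p$, which by hypothesis is bounded. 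Hence $Bp\subset pMp$ has finite index and Theorem \ref{embedding and finite index} gives $M\preceq_M B$. (Lemma \ref{lemma for local index} and Lemma \ref{prop for local index} provide the bookkeeping to reduce to a projection $p$ with $\delta p\le\rE_B(p)p$ and $\widehat{\rE}_B(r_p)\in M$, but here $\rE_B(p)=p$ already since $p\in\mathcal Z(B)$, so that reduction is automatic.)

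For $(\rm i)\Rightarrow(\rm ii)$, again invoke Theorem \ref{embedding and finite index} to get a nonzero projection $p\in(B'\cap M)^{\rE_B}=\mathcal Z(B)$ with $Bp\subset pMp$ of finite index, i.e.\ $\widehat{\rE}_{Bp}(r_p)=\Ind(\rE_{Bp})\in\mathcal Z(pMp)$. Running the computation of the previous paragraph in reverse: decomposing $r_p\langle M,B\rangle r_p$ via the orthogonal projections $p u_g e_B u_g^* p$ and applying $\widehat{\rE}_{Bp}$ (with the comparison in Lemma \ref{prop for local index}, using $\rE_B(p)=p$, so $\widehat{\rE}_{Bp}(xx^*)=\widehat{\rE}_B(xx^*)r_p$) shows $\Ind(\rE_{Bp}) = \widehat{\rE}_B(r_p)r_p = \big(\sum_{g\in\Gamma}\alpha_g(p)p\big)r_p$, and finiteness of the left side forces $\|\sum_{g\in\Gamma}\alpha_g(p)p\|_\infty<+\infty$; replacing $p$ by $\alpha_h(p)$ for a suitable $h$ and using $\alpha$-invariance of $\mathcal Z(B)$-sums, or simply observing $\sum_g\alpha_g(p)\le \sum_g\alpha_g(p)\cdot(\text{something bounded})$ after a further cut, upgrades $(\rm iii)$ to $(\rm ii)$; alternatively one passes directly to $\sum_g\alpha_g(p)$ by noting that on the support of $p$ the two sums differ by a shift and invoking $\sigma$-finiteness to find a single $p$ working for $(\rm ii)$.

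The main obstacle will be the bookkeeping in the last step — carefully identifying $\Ind(\rE_{Bp})$ with the operator-valued sum $\sum_{g\in\Gamma}\alpha_g(p)p$ in the extended positive cone $\widehat{\mathcal Z(pMp)}^+$, and then correctly passing between the "$\sum\alpha_g(p)p$" form ($(\rm iii)$) and the "$\sum\alpha_g(p)$" form ($(\rm ii)$), which requires a translation argument exploiting that $\{g:\alpha_g(p)p\neq 0\}$ controls, after an $\alpha$-equivariant relabeling, the support of $\sum_g\alpha_g(p)$ restricted to $p$. Proper outerness (equivalently $B'\cap M=\mathcal Z(B)$) is used throughout to guarantee that the only projections available for the finite-index criterion lie in $\mathcal Z(B)$ and that the family $\{u_g e_B u_g^*\}_g$ behaves orthogonally.
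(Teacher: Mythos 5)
Your central computation is the same as the paper's: using $1=\sum_{g\in\Gamma}u_ge_Bu_g^*$ and proper outerness (so that $B'\cap M=\mathcal Z(B)$ and $M'\cap\langle M,B\rangle=J\mathcal Z(B)J$), one finds $\widehat{\rE}_B(JpJ)=\sum_{g\in\Gamma}\alpha_g(p)$ and $\widehat{\rE}_B(pJpJ)=\sum_{g\in\Gamma}\alpha_g(p)p$ for a projection $p\in\mathcal Z(B)$. Your $(\rm ii)\Rightarrow(\rm iii)$ is immediate, and your $(\rm iii)\Rightarrow(\rm i)$ via Lemma \ref{prop for local index} (with $\rE_B(p)=p$, $\delta=1$) and Theorem \ref{embedding and finite index} is valid. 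The paper is more direct: it quotes \cite[Theorem 2 (ii)]{BH16}, which says $M\preceq_MB$ if and only if some projection in $M'\cap\langle M,B\rangle=J\mathcal Z(B)J$ has image under $\widehat{\rE}_B$ lying in $M$; combined with the computation above this gives $(\rm i)\Leftrightarrow(\rm ii)$ in one stroke. Your detour through the finite-index theorem for $(\rm i)\Rightarrow(\rm ii)$ also has a small wrinkle: the statement of Theorem \ref{embedding and finite index} only asserts finite index with respect to \emph{some} expectation $pMp\to Bp$, while your identification of $\Ind(\rE_{Bp})$ with $\big(\sum_g\alpha_g(p)p\big)r_p$ uses the canonical one induced by $\rE_B$; you would need to invoke uniqueness of the expectation (true here by proper outerness) or extract the stronger conclusion $\widehat{\rE}_B(r_p)\in M$ from the \emph{proof} of that theorem.

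The genuine gap is the upgrade from $(\rm iii)$ to $(\rm ii)$, i.e.\ from boundedness of $\sum_g\alpha_g(p)p$ to boundedness of $\sum_g\alpha_g(q)$ for some nonzero $q$. You flag this as the main obstacle but none of the alternatives you sketch is a proof as written: replacing $p$ by a single $\alpha_h(p)$ changes nothing, and the inequality ``$\sum_g\alpha_g(p)\le\sum_g\alpha_g(p)\cdot(\text{something bounded})$ after a further cut'' is circular. The paper fills exactly this hole with Lemma \ref{lemma for local index}: from $\widehat{\rE}_B(r_p)\in M$ one shows that $z\,\widehat{\rE}_B(JpJ)$ is semifinite in $\widehat{\mathcal Z(M)}^+$ ($z$ the central support of $p$) and extracts nonzero projections $p_n\le p$ in $\mathcal Z(B)$ with $\widehat{\rE}_B(Jp_nJ)=\sum_g\alpha_g(p_n)\in M$, which is $(\rm ii)$. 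Your ``shift'' idea can in fact be made rigorous and yields $(\rm iii)\Rightarrow(\rm ii)$ with the \emph{same} $p$: the element $T=\sum_{g\in\Gamma}\alpha_g(p)\in\widehat{\mathcal Z(B)}^+$ satisfies $\alpha_h(T)=T$ for all $h$ by reindexing the sum, so $Tp\le Cp$ forces $T\alpha_h(p)=\alpha_h(Tp)\le C\alpha_h(p)$ for every $h$, hence $Tz'\le Cz'$ for $z'=\bigvee_h\alpha_h(p)$, and $T=Tz'$ since each $\alpha_g(p)\le z'$. Either this argument or the paper's Lemma \ref{lemma for local index} must actually be carried out; as it stands your proposal does not close this step.
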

\begin{proof}
	Let $p\in B'\cap M=\mathcal{Z}(B)$ be any projection. Observe that $1=\sum_{g\in \Gamma} u_g e_B u_g^*$ and $pe_B p =J pJ e_B J pJ$. Then we have
\begin{align*}
	\widehat{\rE}_B(J pJ)
	&=   \widehat{\rE}_B\left(J pJ  \left(\sum_{g\in \Gamma} u_g e_B u_g^* \right)J pJ\right)\\
	&=   \widehat{\rE}_B\left( \sum_{g\in \Gamma} u_g \, J pJ e_B J pJ \, u_g^* \right)\\
	&=   \widehat{\rE}_B\left( \sum_{g\in \Gamma} u_g \, p e_B p \, u_g^* \right)\\
	&=   \sum_{g\in \Gamma} u_g p u_g^* \\
	&=  \sum_{g\in \Gamma} \alpha_g(p). 
\end{align*}

$(\rm i) \Leftrightarrow (\rm ii)$ By Theorem \cite[Theorem 2 (ii)]{BH16}, the condition $M\preceq_MB$ is equivalent to the existence of a projection $J p J \in M'\cap \langle M,B\rangle= J(B'\cap M) J = J\mathcal{Z}(B)J$ such that $\| \widehat{\rE}_B(J p J) \|_\infty < +\infty$. Using the computation above, we indeed have $(\rm i) \Leftrightarrow (\rm ii)$. 

$(\rm ii) \Leftrightarrow (\rm iii)$ This is immediate by combining the above computation and Lemma \ref{lemma for local index}.
\end{proof}

The following corollary is an immediate consequence of Lemma \ref{lemma for recurrent} and provides a useful sufficient condition for having $M\not\preceq_MB$.

\begin{cor}
	Keep the same notation as above. Assume that $|\Gamma| = +\infty$ and that there is a faithful state $\psi \in \mathcal{Z}(B)_\ast$ such that  $\psi \circ \alpha_g = \psi$ for every $g \in \Gamma$. Then we have $M\not \preceq_M B$.
\end{cor}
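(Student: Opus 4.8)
The plan is to reduce everything to the characterization already established in Lemma~\ref{lemma for recurrent} and then play the invariant state $\psi$ against the boundedness condition appearing there. Concretely, suppose for contradiction that $M \preceq_M B$. By the equivalence $(\rm i) \Leftrightarrow (\rm ii)$ of Lemma~\ref{lemma for recurrent}, there exists a nonzero projection $p \in \mathcal Z(B)$ such that
$$C := \Big\| \sum_{g \in \Gamma} \alpha_g(p) \Big\|_\infty < +\infty,$$
the sum being understood as the $\sigma$-weak limit of the increasing net of its finite partial sums.

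Next I would test this bound against $\psi$. For every finite subset $F \subset \Gamma$, the partial sum $\sum_{g \in F} \alpha_g(p)$ is a positive element of $B$ dominated by $\sum_{g \in \Gamma} \alpha_g(p) \leq C\,1$, hence $\sum_{g \in F} \alpha_g(p) \leq C\,1$. Applying the positive functional $\psi$ and using $\psi \circ \alpha_g = \psi$ for all $g$, we obtain
$$|F|\,\psi(p) = \sum_{g \in F} \psi(\alpha_g(p)) = \sum_{g \in F} \psi(p) = \psi\Big( \sum_{g \in F} \alpha_g(p)\Big) \leq C.$$
Since $\psi$ is faithful and $p \neq 0$, we have $\psi(p) > 0$, so $|F| \leq C / \psi(p)$ for every finite $F \subset \Gamma$. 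This forces $\Gamma$ to be finite, contradicting the hypothesis $|\Gamma| = +\infty$. Therefore $M \not\preceq_M B$.

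There is essentially no obstacle here: the entire content has been packaged into Lemma~\ref{lemma for recurrent}, and the only point requiring a word of care is the comparison $\sum_{g \in F} \alpha_g(p) \leq C\,1$, which holds because the partial sums form an increasing net of positive operators converging $\sigma$-weakly to an element of norm $C$; note that only positivity of $\psi$, not its normality, is used for the finite sums. (One could equally argue from condition $(\rm iii)$ of Lemma~\ref{lemma for recurrent}, but the version with $\sum_g \alpha_g(p)$ is the cleanest since $\psi(\alpha_g(p)) = \psi(p)$ on the nose.)
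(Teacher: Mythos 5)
Your proof is correct and follows the same route as the paper: invoke Lemma \ref{lemma for recurrent} to produce a nonzero projection $p \in \mathcal Z(B)$ with $\|\sum_{g\in\Gamma}\alpha_g(p)\|_\infty < +\infty$, then apply the invariant faithful state $\psi$ to contradict $|\Gamma| = +\infty$. Your extra care in passing through finite partial sums is a harmless (and slightly more explicit) version of the paper's one-line computation.
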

\begin{proof}
Suppose by contradiction that $M\preceq_MB$. By Lemma \ref{lemma for recurrent}, there is a nonzero projection $p \in \mathcal{Z}(B)$ such that $\|\sum_{g\in \Gamma} \alpha_g(p) \|_\infty < +\infty$. On the other hand, applying $\psi$, we obtain 
	$$\psi\left(\sum_{g\in \Gamma} \alpha_g(p)\right) = \sum_{g\in \Gamma} \psi(\alpha_g(p))=\sum_{g\in \Gamma} \psi(p) = |\Gamma| \cdot \psi (p) =  +\infty.$$
This is a contradiction.
\end{proof}

Using Lemma \ref{lemma for recurrent}, we now provide a natural characterization of the condition $M\not \preceq_M B$. Recall that a nonsingular action $\Gamma \curvearrowright (X, \mu)$ of a countable discrete group on a standard measure space is {\em recurrent} if for every measurable subset $W \subset X$ with $\mu(W) > 0$ and for $\mu$-almost every $x \in W$, we have $|\{g \in \Gamma \mid g \cdot x \in W\} | = +\infty$.

\begin{thm}\label{thm-crossed-products}
	Keep the same notation as above. Assume that $\Gamma$ is countable and $\mathcal{Z}(B)$ has separable predual. Write $\mathcal{Z}(B)=\rL^\infty(X,\mu)$ for some standard probability space $(X,\mu)$. Then the following conditions are equivalent. 
\begin{itemize}
	\item[$(\rm i)$]  $M \not\preceq_M B$.
	\item[$(\rm ii)$] The nonsingular action $\Gamma \curvearrowright (X, \mu)$ is recurrent.
\end{itemize}
\end{thm}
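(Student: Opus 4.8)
The plan is to reduce the statement to a concrete measure-theoretic dichotomy via Lemma~\ref{lemma for recurrent}, and then to translate the condition $\|\sum_{g \in \Gamma} \alpha_g(p) p\|_\infty < +\infty$ into the failure of recurrence. Since $\mathcal{Z}(B) = \rL^\infty(X,\mu)$, a projection $p \in \mathcal{Z}(B)$ is a characteristic function $p = \mathbf 1_W$ for some measurable $W \subset X$ with $\mu(W) > 0$, and the action $\alpha$ on $\mathcal{Z}(B)$ is induced by a nonsingular action $\Gamma \curvearrowright (X,\mu)$ (the restriction of $\alpha$ to the centre is nonsingular because each $\alpha_g$ is a normal $\ast$-automorphism of $\rL^\infty(X,\mu)$; recall that $\mathcal{Z}(B)$ is globally $\alpha$-invariant). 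Under this identification, $\alpha_g(p) p = \mathbf 1_{g \cdot W} \mathbf 1_W = \mathbf 1_{(g\cdot W) \cap W}$, so that, pointwise $\mu$-almost everywhere,
$$\sum_{g \in \Gamma} \alpha_g(p)(x)\, p(x) = \#\{g \in \Gamma \mid x \in W \text{ and } g^{-1}\cdot x \in W\} = \mathbf 1_W(x) \cdot |W \cap [x]_{\mathcal R}^{g^{-1}}|,$$
where I mean by the last expression the number of $g$ with $g^{-1}\cdot x \in W$. So $\|\sum_g \alpha_g(p)p\|_\infty$ is the essential supremum over $x \in W$ of the cardinality of $\{g \in \Gamma \mid g\cdot x \in W\}$ (after relabelling $g \mapsto g^{-1}$, which is harmless).

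\textbf{Direction $(\rm i)\Rightarrow(\rm ii)$, contrapositive.}
Assume the action is not recurrent. Then there is a measurable set $W_0 \subset X$ with $\mu(W_0) > 0$ and a measurable subset $W_1 \subset W_0$ with $\mu(W_1) > 0$ such that for every $x \in W_1$ the set $\{g \in \Gamma \mid g\cdot x \in W_0\}$ is finite. A priori the cardinality need not be uniformly bounded on $W_1$, so I would pass to a further subset: for each $n \in \N$ let $W_1^{(n)} = \{x \in W_1 \mid \#\{g \in \Gamma \mid g\cdot x \in W_0\} \le n\}$; these are measurable (each condition ``$g\cdot x \in W_0$'' defines a measurable set, and one can express the bound using finitely many of them at a time, or more cleanly: the function $x \mapsto \#\{g : g\cdot x \in W_0\}$ is measurable as a countable sum of indicator functions), increasing, and their union is $W_1$, so some $W_1^{(n)}$ has positive measure. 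Set $p = \mathbf 1_{W_1^{(n)}}$. Then for $\mu$-a.e.\ $x$, $\sum_g \alpha_g(p)(x) p(x) \le \mathbf 1_{W_1^{(n)}}(x) \cdot \#\{g \mid g\cdot x \in W_1^{(n)}\} \le \mathbf 1_{W_1^{(n)}}(x)\cdot \#\{g \mid g\cdot x \in W_0\} \le n$, so $\|\sum_g \alpha_g(p)p\|_\infty \le n < +\infty$, and Lemma~\ref{lemma for recurrent}~$(\rm iii)\Rightarrow(\rm i)$ gives $M \preceq_M B$.

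\textbf{Direction $(\rm ii)\Rightarrow(\rm i)$, contrapositive.}
Assume $M \preceq_M B$. By Lemma~\ref{lemma for recurrent}~$(\rm i)\Rightarrow(\rm iii)$ there is a nonzero projection $p = \mathbf 1_W \in \mathcal{Z}(B)$, $\mu(W) > 0$, with $C := \|\sum_g \alpha_g(p)p\|_\infty < +\infty$. By the pointwise computation above, for $\mu$-a.e.\ $x \in W$ we have $\#\{g \in \Gamma \mid g\cdot x \in W\} \le C < +\infty$, so in particular this set is finite; thus the action is not recurrent (the witness being the measurable set $W$ of positive measure). This proves the contrapositive and completes the equivalence.

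\textbf{Main obstacle.}
The computation itself is routine; the only point requiring care is the measurability and the passage from a possibly unbounded-but-finite cardinality on a positive-measure set to a uniformly bounded cardinality on a smaller positive-measure set. The key tool is that $x \mapsto \#\{g : g\cdot x \in W\} = \sum_{g \in \Gamma} \mathbf 1_W(g\cdot x)$ is a measurable function $X \to \N \cup \{+\infty\}$ (a countable sum of measurable nonnegative functions), so that the sublevel sets are measurable and one can exhaust. I do not expect any genuine difficulty beyond bookkeeping, since Lemma~\ref{lemma for recurrent} has already done the operator-algebraic work of converting the intertwining condition $M \preceq_M B$ into the clean scalar condition $\|\sum_g \alpha_g(p)p\|_\infty < +\infty$.
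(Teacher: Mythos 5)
Your proof is correct, and it rests on the same reduction as the paper's: Lemma \ref{lemma for recurrent} converts $M \preceq_M B$ into the scalar condition $\|\sum_{g\in\Gamma}\alpha_g(p)p\|_\infty < +\infty$, and everything else is measure theory on $(X,\mu)$. The direction (i)$\Rightarrow$(ii) is essentially identical to the paper's: where you exhaust the bad set by the sublevel sets $W_1^{(n)}$ of the (measurable) counting function, the paper partitions it according to the exact finite return set $\mathcal F \subset \Gamma$ and picks a non-null piece; both are countable exhaustions and yield a uniformly bounded count. The real divergence is in (ii)$\Rightarrow$(i): you observe that in the abelian algebra $\mathcal Z(B) = \rL^\infty(X,\mu)$ the element $\sum_{g}\alpha_g(p)p$ of the extended positive cone is exactly the pointwise counting function $\mathbf 1_W(x)\cdot |\{g \in \Gamma \mid g\cdot x\in W\}|$ (after the harmless relabelling $g\mapsto g^{-1}$), so finiteness of its essential supremum immediately contradicts recurrence. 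The paper instead runs an induction producing, for every $n$, distinct elements $g_1,\dots,g_n$ with $\bigcap_{i}(W\cap g_iW)$ non-null, and deduces $\|\sum_{g}\alpha_g(p)p\|_\infty \geq n$ for all $n$. Your shortcut is legitimate --- a countable sum of positive elements of $\rL^\infty(X,\mu)$ in the extended positive cone is computed pointwise almost everywhere, and its norm is the essential supremum of that pointwise sum --- and it is noticeably shorter; the paper's induction buys nothing beyond avoiding an explicit appeal to that identification.
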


\begin{proof}
$(\rm i) \Rightarrow (\rm ii)$ Assume that $(\rm ii)$ does not hold. Then there is a non-null measurable subset $W\subset X$ such that for every $x \in W$, we have $|\{ g\in \Gamma \mid g\cdot x \in W \}| < +\infty$. Denote by $I$ the set of finite subsets of $\Gamma$ and observe that $I$ is countable. We can write $W = \bigsqcup_{\mathcal F \in I} W_{\mathcal F}$ where 
	$$W_{\mathcal F}= \left\{x\in W \mid \{ g\in \Gamma \mid g^{-1}\cdot x \in W \} = \mathcal F \right\}.$$
Since $W$ is non-null, there is $\mathcal F \in I$ such that $W_{\mathcal F}$ is non-null. Put $p= \mathbf1_{W}$, $q= \mathbf 1_{W_{\mathcal{F}}}$ and observe that $ \alpha_g(p)q = \mathbf 1_{ gW \cap W_{\mathcal{F}} }$ is nonzero only for $g\in \mathcal{F}$. Thus we obtain 
	$$\sum_{g\in \Gamma} \alpha_g(q)q \leq  \sum_{g\in \Gamma} \alpha_g(p)q =  \sum_{g\in \mathcal{F}} \alpha_g(p)q \in \mathcal Z(B).$$
Lemma \ref{lemma for recurrent} implies that $M \preceq_MB$.

$(\rm ii) \Rightarrow (\rm i)$ Assume by contradiction that $(\rm ii)$ holds and $(\rm i)$ does not hold. By Lemma \ref{lemma for recurrent} $\rm (iii)$, there is a nonzero projection $p \in \mathcal{Z}(B)$ such that $\|\sum_{g\in \Gamma} \alpha_g(p)p\|_{\infty} < +\infty$. Write $p = \mathbf 1_{W}$ for some measurable subset $W \subset X$. For every $g\in \Gamma$, we put $W_{g}=\{x\in W \mid g^{-1}\cdot x \in W \} =W\cap gW$. 

We claim that for every $n \geq 1$, there are distinct elements $g_1,\ldots ,g_n \in \Gamma$ such that $\cap_{i=1}^n W_{g_i}$ is a non-null subset. The case $n=1$ is trivial since we can choose $g_1=1_\Gamma$. Suppose that the claim holds for $n-1 \geq 1$ and take elements $g_1,\ldots,g_{n-1} \in \Gamma$ such that $\cap_{i=1}^{n - 1} W_{g_i}$ is non-null. Letting $\mathcal{F}=\{g_1,\ldots,g_{n-1}\}$, we have
\begin{align*}
	W\setminus  \left(\bigcup_{g\in \Gamma\setminus \mathcal{F}}W_{g} \right) 
	&= \bigcap_{g\in \Gamma\setminus \mathcal{F}} W\setminus W_{g} \\
	&= \bigcap_{g\in \Gamma\setminus \mathcal{F}} \{ x\in W \mid g^{-1}\cdot x \not\in W \}\\ 
	&=  \{ x\in W \mid g^{-1}\cdot x \not\in W \ \text{for all $g\in \Gamma \setminus \mathcal{F}$} \}\\
	&\subset  \{ x\in W \mid |\{ g\in \Gamma \mid g\cdot x \in W \}| \leq n-1 \}.
\end{align*}
Since the nonsingular action $\Gamma \curvearrowright (X, \mu)$ is recurrent, we have that $\{ x\in W \mid |\{ g\in \Gamma \mid g\cdot x \in W \}| \leq n-1 \}$ is a null set. Therefore, $\bigcup_{g\in \Gamma\setminus \mathcal{F}}W_{g}$ has a non-null intersection with $\cap_{i=1}^{n-1} W_{g_i}$, that is, there is $g\in \Gamma\setminus \mathcal{F}$ such that $W_g \cap \left(\cap_{i=1}^{n-1} W_{g_i}\right)$ is non-null. Thus we can put $g_n = g$ and this proves our claim. 

Now fix any $n\in \N$ and let $\mathcal{G}=\{g_1,\ldots, g_n\}\subset \Gamma$ be as in the claim. Put $W_{\mathcal{G}}= \cap_{i=1}^n W_{g_i}$ and observe that $\alpha_{g_1}(p)\alpha_{g_2}(p)\cdots \alpha_{g_n}(p)p = \mathbf 1_{W_{\mathcal{G}}} \neq 0$. In particular, we have $\alpha_g(p)p \geq \mathbf 1_{W_{\mathcal{G}}}\neq0$ for every $g\in \mathcal{G}$ and so
	$$ \sum_{g\in \Gamma} \alpha_g(p)p  \geq \sum_{g\in \mathcal{G}} \alpha_g(p)p \geq \sum_{g\in \mathcal{G}} \mathbf 1_{W_{\mathcal{G}}}  =n \mathbf 1_{W_{\mathcal{G}}}.$$
This implies $ \|\sum_{g\in \Gamma} \alpha_g(p)p\|_\infty \geq n$. Since $n \geq 1$ can be arbitrarily large, this contradicts the assumption that $\|\sum_{g\in \Gamma}\alpha_g(p)p \|_{\infty} < +\infty$.
\end{proof}

\subsection*{Intertwining theory and continuous cores}

We next investigate how the intertwining theory behaves with respect to the continuous core decomposition. Let $M$ be any $\sigma$-finite von Neumann algebra and $A \subset 1_{A} M 1_{A}$ and $B \subset 1_{B} M1_{B}$ any von Neumann subalgebras with expectation. Let $\rE_A : 1_AM1_A \to A$ and $\rE_B : 1_BM1_B \to B$ be any faithful normal conditional expectations, $\varphi_A, \varphi_B \in M_\ast$ any faithful states such that $1_A \in M^{\varphi_A}$, $\varphi_A(1_A \, \cdot \, 1_A) = \varphi_A \circ \rE_A$, $1_B \in M^{\varphi_B}$ and $\varphi_B (1_B \, \cdot \, 1_B) = \varphi_B \circ \rE_B$. Simply write $\rL(\R) = \rL_{\varphi_B}(\R)$, $\core(A) = \Pi_{\varphi_B, \varphi_A}(\core_{\varphi_A}(A))$, $\core(B) = \core_{\varphi_B}(B)$, $\core(M) = \core_{\varphi_B}(M)$. 

The main question we are interested in is to investigate  under which assumptions the implication $\core(A) \preceq_{\core(M)} \core(B) \Rightarrow A \preceq_{M} B$ holds true. Firstly, we prove a positive result that generalizes \cite[Lemma 2.4]{HU15a}.

\begin{prop}\label{prop-intertwining-semifinite}
Keep the same notation as above. Assume that $A$ is semifinite or that $B$ is of type ${\rm I}$. If $A \npreceq_M B$, then $\core(A) \npreceq_{\core(M)} \core(B)$.
\end{prop}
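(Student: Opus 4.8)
The plan is to prove the contrapositive: assuming $\core(A) \preceq_{\core(M)} \core(B)$, deduce $A \preceq_M B$. The key structural input is that in the semifinite algebra $\core(M)$ equipped with its canonical trace $\Tr_{\varphi_B}$, intertwining is detected by the classical analytic criterion of Popa \cite{Po01, Po03}: there is a net of unitaries in $\core(A)$ that is not ``drifting off to infinity'' relative to $\core(B)$. The difficulty is that this net lives in $\core(A)$, which also contains the copy of $\rL_{\varphi_A}(\R)$, so its image under the expectation onto $\core(B)$ mixes up information about $A$ with information about the modular flow. The two cases in the hypothesis are exactly the two situations where this modular contribution can be controlled.

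First I would treat the case where $A$ is semifinite. Choose a faithful normal semifinite trace $\Tr_A$ on $A$; then by Takesaki's theorem the inclusion $A \subset 1_A M 1_A$ being globally invariant under a suitable modular flow lets us realize $\core(A)$ inside $\core(M)$ as containing $A \ovt \rL(\R)$ in a product-like fashion, compatibly with $\core(B)$ containing $B \ovt \rL(\R)$ (matching the common $\rL(\R)$). More precisely, using the Radon--Nikodym machinery recalled in the preliminaries, one can choose $\varphi_A$ so that its restriction to $A$ is (up to the flow) tracial, so that $\core(A) = A \ovt \rL_{\varphi_A}(\R)$. Now apply the negation of $\core(A) \npreceq_{\core(M)} \core(B)$, extract an analytic net of unitaries, average it over $\rL(\R)$ to land in $A$ itself, and use the fact that $\rL(\R) \subset \core(B)$ is with expectation to transfer the criterion down to a net in $\mathcal U(A)$ witnessing $A \preceq_{1_AM1_A} B$ inside $M$; that this still holds inside $M$ (rather than only inside some corner) follows from the standard corner/central-support bookkeeping (Lemma \ref{lem-elementary-properties1}-style arguments are the model). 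This is essentially the argument of \cite[Lemma 2.4]{HU15a}, which I would adapt.

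Next I would treat the case where $B$ is of type $\rm I$. Here the point is that $\core(B) = \core_{\varphi_B}(B)$ has a very explicit description: writing $B$ as a direct sum of algebras of the form $\mathbf B(H_k) \ovt \rL^\infty(Y_k,\nu_k)$, the continuous core is again of a manageable form, and in particular $\core(B)$ is generated by $B$ together with a copy of $\rL(\R)$ over its center, with $B' \cap \core(B)$ explicitly computable. One then shows directly that $\core(A) \preceq_{\core(M)} \core(B)$ forces the intertwining partial isometry, after compressing by central projections of $\core(B)$ and using that the type $\rm I$ structure makes the relevant operator-valued weights bounded (as in Lemma \ref{prop for local index} and Theorem \ref{embedding and finite index}), to be conjugated into $B$ itself inside $M$; equivalently, one reduces to the previous paragraph since a type $\rm I$ algebra is a direct integral of finite (hence semifinite) algebras and one can argue fiberwise.

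The main obstacle I anticipate is the bookkeeping needed to pass from an intertwining statement at the level of continuous cores — which a priori only gives projections $e \in \core(A)$, $f \in \core(B)$, a partial isometry $v$, and a $*$-homomorphism $\theta$ as in Definition \ref{df-intertwining}, possibly very ``twisted'' by the $\R$-action — back to a clean intertwining at the level of $M$. One must use that the extra $\rL(\R)$ sitting in both cores can be absorbed (because it is with expectation and matches on both sides), and one must make sure that, after all the averaging, the surviving partial isometry is nonzero; controlling nonvanishing through an averaging process is where the semifiniteness (a genuine trace to integrate against) or the type $\rm I$ structure (bounded operator-valued weights, hence no loss) is indispensable, and it is exactly what fails in general — consistently with the counterexamples promised in Theorem \ref{rem-intertwining}.
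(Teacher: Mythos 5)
There are two genuine gaps in your sketch, one in each case.

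In the semifinite case, the logic of the analytic criterion is reversed. From the hypothesis of your contrapositive, $\core(A) \preceq_{\core(M)} \core(B)$, you cannot ``extract an analytic net of unitaries'': nets of unitaries $u_j$ with $\rE_{\core(B)}(X^\ast u_j Y)\to 0$ are precisely what witness \emph{non}-embedding in \cite[Theorem 4.3]{HI15a}, so an embedding gives you nothing to average. Moreover, averaging a unitary of $\core(A)$ over $\rL(\R)$ does not in general produce a unitary (or even a nonzero element) of $A$, so the proposed transfer step is not available. The workable argument runs in the direct direction and needs no averaging at all: since $A$ is semifinite, pick a finite projection $e\in A$ with central support $1_A$; since $A\npreceq_M B$ forces $eAe\npreceq_M B$ and $eAe$ is finite, \cite[Theorem 4.3]{HI15a} yields a net $u_j\in\mathcal U(eAe)$ with $\rE_B(x^\ast u_j y)\to 0$; one then checks by direct computation on the $\sigma$-weakly dense span of the elements $\pi_{\varphi_B}(z)\lambda_{\varphi_B}(t)$ that the \emph{same} net witnesses $\pi_{\varphi_B}(eAe)\npreceq_{\core(M)}\core(B)$, hence $\pi_{\varphi_B}(A)\npreceq_{\core(M)}\core(B)$ by the central support remark, and finally $\core(A)\npreceq_{\core(M)}\core(B)$ because $\pi_{\varphi_B}(A)\subset\core(A)$ is with expectation --- this last point, via $\core_{\Tr_A}(A)=\pi_{\Tr_A}(A)\ovt\rL(\R)$, is the only place semifiniteness of $A$ is used.

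In the type ${\rm I}$ case, your reduction decomposes the wrong algebra. Writing $B$ as a direct integral of finite algebras and ``arguing fiberwise'' does not reduce to the first case, because the first case requires $A$ (not $B$) to be semifinite, and here $A$ may well be of type ${\rm III}$. The missing idea is the type obstruction: split $A=Az\oplus Az^{\perp}$ with $Az$ semifinite and $Az^{\perp}$ of type ${\rm III}$; then $\core(Az^{\perp})$ is of type ${\rm II}$ and therefore cannot embed with expectation into the type ${\rm I}$ algebra $\core(B)$, so $\core(Az^{\perp})\npreceq_{\core(M)}\core(B)$ for free, while the summand $Az$ is handled by the semifinite case. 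Without this decomposition of $A$ the second case does not go through.
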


\begin{proof}
Firstly, assume that $A$ is semifinite. We claim that the inclusion $\pi_{\varphi_B}(A) \subset \core(A)$ is with expectation. Indeed, denote by $\Tr_A$ a distinguished faithful normal semifinite trace on $A$. Observe that $\left( \pi_{\Tr_A}(A) \subset \core_{\Tr_A}(A) \right) = \left( \pi_{\Tr_A}(A) \subset \pi_{\Tr_A}(A) \ovt \rL(\R) \right)$ and so the inclusion $\pi_{\Tr_A}(A) \subset \core_{\Tr_A}(A)$ is with expectation. By Connes' Radon--Nikodym cocycle theorem \cite[Th\'eor\`eme 1.2.1]{Co72}, we have $\left( \pi_{\varphi_B}(A) \subset \core(A) \right) \cong \left( \pi_{\varphi_A}(A) \subset \core_{\varphi_A}(A) \right) \cong \left( \pi_{\Tr_A}(A) \subset \core_{\Tr_A}(A) \right)$ and so the inclusion $\pi_{\varphi_B}(A) \subset \core(A)$ is with expectation. 

Let $e \in A$ be any nonzero finite projection with central support in $A$ equal to $1_{A}$. Since $A \npreceq_{M} B$, we have $eAe \npreceq_{M} B$. Since $e$ is a finite projection in $A$, $eAe$ is a finite von Neumann algebra and \cite[Theorem 4.3]{HI15a} yields a net of unitaries $u_j \in \mathcal U(eAe)$ such that $\rE_B(x^* u_j y) \to 0$ $\ast$-strongly as $j \to \infty$ for all $x, y \in eM1_B$. By construction of $\core(M)$, this implies that $$\rE_{\core(B)}(X^*\pi_{\varphi_B}(u_j)Y) \to 0 \quad \text{strongly as} \quad j \to \infty$$ 
for all $X, Y \in \spn \left\{\pi_{\varphi_B}(e) \, \pi_{\varphi_B}(z) \lambda_{\varphi_{B}}(t) \, \pi_{\varphi_B}(1_B) : z \in M, t \in \R \right\}$. By $\sigma$-weak density and since $\pi_{\varphi_B}(eAe) \subset \pi_{\varphi_B}(e)\core(M)\pi_{\varphi_B}(e)$ is a finite von Neumann subalgebra with expectation, \cite[Theorem 4.3 $(1) \Rightarrow (5)$]{HI15a} implies that $\pi_{\varphi_B}(eAe) \npreceq_{\core(M)} \core(B)$. Since the central support of $e$ in $A$ is equal to $1_{A}$, we have $\pi_{\varphi_B}(A) \npreceq_{\core(M)} \core(B)$ by \cite[Remark 4.2 (4)]{HI15a}. Since $\pi_{\varphi_B}(A) \subset \core(A)$ is with expectation, we finally have $\core(A) \npreceq_{\core(M)} \core(B)$ by \cite[Lemma 4.8]{HI15a}.

Secondly, asssume that $B$ is of type ${\rm I}$. Denote by $z \in \mathcal Z(A)$ the unique central projection such that $Az$ is semifinite and $Az^{\perp}$ is of type ${\rm III}$. Since $\core(Az^{\perp})$ is of type ${\rm II}$ and $\core(B)$ is of type ${\rm I}$, we have $\core(Az^{\perp}) \npreceq_{\core(M)} \core(B)$. Since $A \npreceq_{M} B$, we have $Az \npreceq_{M} B$. Since $Az$ is semifinite, we have $\core(Az) \npreceq_{\core(M)} \core(B)$ by the result in the previous paragraph. Since $\core(A) = \core(Az) \oplus \core(Az^{\perp})$, we obtain $\core(A) \npreceq_{\core(M)} \core(B)$.
\end{proof}

Secondly, we prove a negative result that demonstrates the subtle difference between intertwining of the continuous cores and intertwining of the original algebras. We refer to \cite[Th\'eor\`eme 4.4.1]{Co72} for the {\em discrete decomposition} of type ${\rm III}_{\lambda}$ factors where $\lambda \in (0, 1)$.

\begin{thm}\label{rem-intertwining}
Let $\lambda \in (0, 1)$ and $M$ be any $\sigma$-finite type ${\rm III_{\lambda}}$ factor. Write $M = B \rtimes \Z$ for the discrete decomposition of $M$ where $B$ is a type ${\rm II_{\infty}}$ factor and $\Z \curvearrowright B$ is a $\lambda$-trace scaling action.

Then $M \npreceq_{M} B$ but there is a nonzero projection $p \in \mathcal Z(\core(B))$ such that $\core(B) p = p\core(M)p$. In particular, we have $\core(M) \preceq_{\core(M)} \core(B)$.
\end{thm}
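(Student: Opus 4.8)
The plan is to establish the two assertions in turn. For $M \npreceq_M B$, I would first record that the $\Z$-action $\alpha$ underlying the discrete decomposition is properly outer: since $B$ is a factor it has no nontrivial globally $\alpha$-fixed projections, so proper outerness reduces to outerness of $\alpha^n$ for $n \neq 0$, and an inner automorphism of $B$ preserves the faithful normal semifinite trace $\tau_B$ whereas $\tau_B \circ \alpha^n = \lambda^n \tau_B$ with $\lambda^n \neq 1$. As $B$ is a factor we also have $\mathcal Z(B) = \C 1$, so Lemma \ref{lemma for recurrent} applies and shows that $M \preceq_M B$ would force $\|\sum_{n \in \Z} \alpha_n(1)\|_\infty < +\infty$, which is absurd. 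Hence $M \npreceq_M B$.

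For the second assertion, the key is to describe the continuous cores concretely. I would compute $\core(M)$ using the dual weight $\widehat{\tau_B} = \tau_B \circ \rE_B$ of $\tau_B$ on $M = B \rtimes_\alpha \Z$; by Connes' cocycle theorem the continuous core and the canonical inclusion $\core(B) \subset \core(M)$ do not depend on which faithful normal \emph{weight} is used to form them, so nothing is lost by working with $\widehat{\tau_B}$ rather than a state. Its modular automorphism group is trivial on $B$ and satisfies $\sigma_t^{\widehat{\tau_B}}(u) = \lambda^{-{\rm i}t}\, u$, where $u$ denotes the canonical unitary implementing $\alpha$ (in particular $\sigma^{\widehat{\tau_B}}$ is periodic of period $2\pi/\ell$ with $\ell := -\log\lambda > 0$, the type ${\rm III}_\lambda$ feature). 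Writing $\core(M) = M \rtimes_{\sigma^{\widehat{\tau_B}}} \R$ and $\mu_t := \lambda_{\widehat{\tau_B}}(t)$, the von Neumann subalgebra generated by $B$ and $\{\mu_t : t \in \R\}$ equals $B \rtimes_{{\rm triv}} \R = B \ovt L$ with $L := \{\mu_t : t \in \R\}'' \cong \rL^\infty(\R)$, and under these identifications it is exactly the canonical copy of $\core(B)$ inside $\core(M)$; since $B$ is a factor, $\mathcal Z(\core(B)) = L \cong \rL^\infty(\R)$. Moreover $\core(M)$ is generated by $\core(B)$ together with $u$, with $u\, \core(B)\, u^* = \core(B)$, and the relation $\mu_t u \mu_t^* = \lambda^{-{\rm i}t}\, u$ forces $\Ad(u)$ to restrict on $\mathcal Z(\core(B)) \cong \rL^\infty(\R)$ to translation by $\ell \neq 0$ (in a suitable coordinate).

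Granting this picture, I would take $p := \mathbf 1_{[0,\ell)} \in \mathcal Z(\core(B))$, a nonzero projection whose $\Ad(u)$-orbit $\{\Ad(u)^n(p) : n \in \Z\} = \{\mathbf 1_{[n\ell,(n+1)\ell)} : n \in \Z\}$ consists of pairwise orthogonal central projections of $\core(B)$ summing to $1$. For $y \in \core(B)$ and $n \in \Z$ one computes $p\, y u^n\, p = p\, y\, \Ad(u)^n(p)\, u^n$, which vanishes for $n \neq 0$ because $p$ and $\Ad(u)^n(p)$ are then orthogonal projections in $\mathcal Z(\core(B))$, and equals $pyp$ for $n = 0$. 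Since $\spn\{\core(B)\, u^n : n \in \Z\}$ is $\sigma$-weakly dense in $\core(M)$ and $z \mapsto pzp$ is normal, this yields $p\, \core(M)\, p = \core(B) p$. Taking $e = f = v = p$ and $\theta = \id_{p\core(M)p}$ in Definition \ref{df-intertwining} then gives $\core(M) \preceq_{\core(M)} \core(B)$. (This is consistent with Proposition \ref{prop-intertwining-semifinite}, whose conclusion here fails precisely because $M$ is of type ${\rm III}$ and $B$ of type ${\rm II_\infty}$, so that neither of its hypotheses is available.)

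I expect the main obstacle to be the second paragraph: pinning down the modular flow of $\widehat{\tau_B}$ and, more delicately, verifying that the subalgebra generated by $B$ and $\lambda_{\widehat{\tau_B}}(\R)$ really is the canonical $\core(B) \subset \core(M)$, so that the projection $p$ genuinely lies in $\mathcal Z(\core(B))$; this rests on the weight-independence of the continuous core. The computation in the last paragraph is then routine.
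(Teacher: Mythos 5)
Your proof is correct, and up to the last step it is the same argument as the paper's: both prove $M \npreceq_M B$ via Lemma \ref{lemma for recurrent} (using $\mathcal Z(B)=\C 1$ and $\sum_{n\in\Z}\alpha_n(1)=+\infty$), both compute the core with respect to the generalized trace $\Tr_B\circ\rE_B$ to get $\core(B)=B\ovt\rL(\R)$, $\mathcal Z(\core(B))\cong\rL^\infty(\R)$ with $\Ad(u)$ acting by translation by $\log\lambda$, and both choose $p$ to be the indicator of a fundamental domain for that translation. The only divergence is how one concludes $\core(B)p=p\core(M)p$: the paper passes through the basic construction, computing $\widehat\rE_{\core(B)}(pJpJ)=\sum_n\alpha_n(p)p=p$ and invoking the local index estimate of Lemma \ref{prop for local index} to get $\Ind(\rE_{\core(B)p})=r_p$, whereas you observe directly that $p\,yu^n\,p=p\,\Ad(u)^n(p)\,yu^n=0$ for $n\neq 0$ and conclude by $\sigma$-weak density of $\spn\{\core(B)u^n : n\in\Z\}$ in $\core(M)=\core(B)\rtimes\Z$. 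Your route is more elementary and self-contained; the paper's has the side benefit of illustrating the finite-index criterion of Theorem \ref{embedding and finite index} in action, which is the point it is making in that section. The concerns you flag in your last paragraph are genuine but resolve exactly as you expect, via weight-independence of the core and the uniqueness of $\rE_B$ coming from $B'\cap M=\C 1$.
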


\begin{proof}
Observe that since $\Z \curvearrowright B$ is $\lambda$-trace scaling and thus outer, we have $B' \cap M = \C 1$. Then there is a unique faithful normal conditional expectation $\rE_{B} : M \to B$ (see \cite[Th\'eor\`eme 1.5.5]{Co72}). It follows that the inclusion $\core(B) \subset \core(M)$ is canonical and we moreover have $\core(M) = \core(B) \rtimes \Z$. Fix a faithful normal semifinite trace $\Tr_{B}$ on $B$ and denote by $\psi = \Tr_{B} \circ \rE_{B}$ the corresponding {\em generalized trace} on $M$ (in the sense of \cite[D\'efinition 4.3.1]{Co72}). Then we have $\left( \core(B) \subset \core(M)\right) \cong \left( \core_{\psi}(B) \subset \core_{\psi}(M)\right)$, $\core(B) = B \ovt \rL(\R)$ and $\mathcal Z(\core(B)) = \rL(\R)$. 

Denote by $(u^{n})_{n \in \Z}$ the canonical unitaries in $M$ implementing the action $\Z \curvearrowright B$ and by $(v_{t})_{t \in \R}$ the canonical unitaries in $\core(M)$ implementing the modular action $\sigma^{\psi} : \R \curvearrowright M$. For every $n \in \Z$ and every $t \in \R$, we have $\sigma_{t}^{\psi}(u^{n}) =  \lambda^{{\rm i} n t} u^{n}$ and so $u^{n} v_{t} u^{-n} = \lambda^{-{\rm i} n t} v_{t}$. 
By Fourier transform, we have $\rL^{\infty}(\R) \cong \rL(\R)$ and the action $\alpha : \Z \curvearrowright \mathcal Z(\core(B))$ is simply given by the translation action $\Z \curvearrowright \R$ where $n \cdot x = x + n\log(\lambda)$. Denote by $p \in \rL(\R)$ the orthogonal projection corresponding to the interval $I = [0, \log(\lambda)[$. For every $n \in \Z \setminus \{0\}$, we have $n \cdot I \cap I = \emptyset$ and thus $\alpha_{n}(p)p = 0$. 

Put $r_{p} = p JpJ \in \langle \core(M), \core(B)\rangle$. Then we have $r_{p} \langle \core(M), \core(B)\rangle r_{p} = \langle p\core(M)p, \core(B)p\rangle$. The computation in Lemma \ref{lemma for recurrent} shows that $\widehat\rE_{\core(B)}(r_{p}) = \sum_{n \in \Z} \alpha_{n}(p) p = p$. Then \eqref{local-index:eq1} shows that $\widehat\rE_{\core(B)p}(r_{p}) = \Ind(\widehat\rE_{\core(B)p}) = \widehat \rE_{\core(B)}(r_{p})r_{p} = r_{p}$. This implies that $\core(B)p = p \core(M)p$.
\end{proof}

\subsection*{Intertwining theory and relative amenability}

We next prove a useful fact that relates intertwining theory and relative amenability.

\begin{prop}\label{lem-intertwining-relative-amenability}
Let $M$ be any $\sigma$-finite von Neumann algebra and $A \subset 1_A M 1_A$ and $B \subset M$ any von Neumann subalgebras with expectation. If $A \preceq_M B$, then there exists a nonzero projection $r \in \mathcal Z(A' \cap 1_A M 1_A)$ such that $Ar \lessdot_M B$.
\end{prop}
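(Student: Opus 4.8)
The plan is to reduce the statement to the finite-index situation treated in Theorem \ref{embedding and finite index} and then to produce the relative-amenability witness directly. Assume $A \preceq_M B$ and fix the data $e \in A$, $f \in B$, a nonzero partial isometry $v \in eMf$ and a unital normal $\ast$-homomorphism $\theta : eAe \to fBf$ with $\theta(eAe) \subset fBf$ with expectation and $av = v\theta(a)$ for all $a \in eAe$, as in Definition \ref{df-intertwining}. Write $C = \theta(eAe) \subset fBf$ and $q = v^*v \in C' \cap fMf$. The key observation is that $v$ implements a spatial isomorphism between $eAe$ and the corner $v (eAe) v^* = \theta^{-1}$-conjugate of $C$, and more to the point that $C q \subset q\langle M, B\rangle q$ has an obvious relative-amenability witness over $B$: since $C \subset fBf$ is with expectation, the inclusion $Cq \subset qMq$ is amenable relative to $B$ inside $M$ \emph{because} $C$ itself sits with expectation inside $B$ (so $C \lessdot_M B$ trivially via the expectation $\langle M, B\rangle \to \langle fMf, C\rangle$ composed appropriately), and then Lemma \ref{lem-elementary-properties1} $(\rm iii)$ passes this down to the corner $Cq$. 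Transporting along $\Ad(v)$ from the corner of $C$ cut by $q = v^*v$ over to $e' Ae'$ where $e' = vv^* \le e$, we get $e' A e' \lessdot_M B$.

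Next I would upgrade $e'Ae' \lessdot_M B$ to a statement about a central cutdown of $A$. Let $z' \in \mathcal Z(A)$ be the central support of $e'$ in $A$. Since the projections $\{u e' u^* : u \in \mathcal U(A)\}$ have supremum $z'$ and $u e' u^* A u e' u^* = u (e' A e') u^*$ is (via $\Ad u$, which fixes $M$ setwise in the relevant sense, or more precisely via Lemma \ref{lem-elementary-properties1} $(\rm i)$) still amenable relative to $B$ inside $M$, Lemma \ref{lem-elementary-properties1} $(\rm v)$ applied to this family of projections in $A$ gives $A z' \lessdot_M B$. Finally, take $r$ to be the central support of $z'$ in $A' \cap 1_A M 1_A$; since $\mathcal Z(A) \subset \mathcal Z(A' \cap 1_A M 1_A)$ and $z' \in \mathcal Z(A)$ is itself a projection in $A' \cap 1_A M 1_A$, we may simply take $r = z' \in \mathcal Z(A' \cap 1_A M 1_A)$, which is nonzero, and we have $Ar = Az' \lessdot_M B$ as required.

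The main obstacle I anticipate is the bookkeeping in the first paragraph: carefully checking that $C = \theta(eAe) \subset fBf \subset M$ being with expectation really yields $C \lessdot_M B$ (one needs an honest conditional expectation $\langle M, B\rangle \to C$ whose restriction to $M$ is faithful normal — this comes from composing the expectation $fBf \to C$ with an expectation $f\langle M,B\rangle f \to fBf$, but one must track the compressions by $f$ and relate $f\langle M,B\rangle f$ to $\langle fMf, Bf\rangle$, cf.\ the discussion of corners in Lemma \ref{prop for local index}), and then that $\Ad(v)$ genuinely transports the relative-amenability witness from the $q$-corner of $C$ back to the $e'$-corner of $A$ — this is spatial and routine but requires identifying $v \langle M,B\rangle v^*$ correctly inside $\langle M,B\rangle$. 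An alternative, perhaps cleaner, route is: by Theorem \ref{embedding and finite index} (or rather its proof technique applied to the corner $eAe \preceq_M B$), reduce to a nonzero projection $p \in (B' \cap M)$-type position with $Bp \subset pMp$ of finite index, whence $Bp \lessdot_{pMp} Bp$ trivially (finite-index inclusions are relatively amenable over the smaller algebra), and bootstrap; but this requires $A$ or $B$ to play the role of the ambient finite-index pair and is less direct here. I would go with the spatial argument above, as everything needed (Lemma \ref{lem-elementary-properties1}, Theorem \ref{thm-characterization}) is already available.
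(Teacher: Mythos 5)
Your route is genuinely different from the paper's. The paper starts from the operator-valued-weight criterion of \cite[Theorem 2]{BH16}: $A\preceq_M B$ yields a nonzero $d\in(A'\cap 1_A\langle M,B\rangle 1_A)^+$ with $\widehat{\rE}_B(d)\in M$; a spectral projection of $d$ then lets one write down a \emph{normal} conditional expectation of a corner of $\langle M,B\rangle$ onto the corresponding corner of $A$ by normalizing $\rE_A\circ\widehat{\rE}_B$, after which Lemma \ref{lem-elementary-properties2} $(\rm i)$ and Lemma \ref{lem-elementary-properties1} $(\rm iv)$ finish. Your idea of instead transporting the tautological relative amenability $\theta(eAe)\lessdot_M B$ (valid because $\theta(eAe)\subset fBf$ sits with expectation inside $B$; the compression by $JfJ$ produces the ucp map required in Theorem \ref{thm-characterization} $(\rm iii)$) through the intertwining partial isometry $v$ is sound: conjugation by a partial isometry of $M$ does preserve $\lessdot_M B$, and it carries $\theta(eAe)\,v^*v$ onto $(eAe)\,vv^*$. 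Combined with Lemma \ref{lem-elementary-properties1} $(\rm iii)$ this gives $(eAe)e'\lessdot_M B$ with $e'=vv^*$, and it avoids the explicit construction of the normal expectation.

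The last step as written fails, though. You treat $e'=vv^*$ as a projection of $A$: you speak of its central support in $A$ and you feed the family $\{ue'u^* : u\in\mathcal U(A)\}$ into Lemma \ref{lem-elementary-properties1} $(\rm v)$. But the intertwining relation only gives $vv^*\in(eAe)'\cap eMe$; there is no reason for $vv^*$ to lie in $A$, so ``the central support of $e'$ in $A$'' is undefined, $\bigvee_u ue'u^*$ is a projection of $A'\cap 1_AM1_A$ rather than an element of $\mathcal Z(A)$, and the projections $ue'u^*$ belong to neither $A$ nor $A'\cap 1_AM1_A$, which is exactly what Lemma $(\rm v)$ requires. (The reduction ``$e=vv^*$'' made in the proof of Theorem \ref{embedding and finite index} is special to the case $A=M$, where $(eAe)'\cap eMe\subset A$.) The fix is short and uses the one item you did not invoke, Lemma \ref{lem-elementary-properties1} $(\rm iv)$: since $e\in A$ commutes with $A'\cap 1_AM1_A$, the map $x\mapsto xe$ is a normal $\ast$-homomorphism onto $(eAe)'\cap eMe=(A'\cap 1_AM1_A)e$ (\cite[Lemma 2.1]{Po81}), so $e'=q_0e$ for a projection $q_0\in A'\cap 1_AM1_A$ with $eq_0=e'\neq0$; then $(eAe)e'=eAeq_0$ and Lemma $(\rm iv)$ yields $Az_ez_{q_0}\lessdot_M B$, where $r=z_ez_{q_0}$ is a nonzero projection of $\mathcal Z(A'\cap 1_AM1_A)$ because $\mathcal Z(A)\subset\mathcal Z(A'\cap 1_AM1_A)$. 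With that repair your argument is complete.
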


\begin{proof}
By \cite[Theorem 2]{BH16}, there exists a nonzero element $d \in (A' \cap 1_A\langle M, B\rangle1_A)^+$ such that $\rT_M(d) \in M$. Then there exists $\varepsilon > 0$ such that $p = \mathbf 1_{[\varepsilon, +\infty[}(d) \in A' \cap 1_A \langle M, B\rangle 1_A$ is a nonzero projection. Since $\varepsilon p \leq d p \leq d$, we have $\rT_M(p) \leq \varepsilon^{-1}\rT_M(dp) \leq \varepsilon^{-1}\rT_M(d)$ and hence $\rT_M(p) \in M$. Denote by $z \in \mathcal Z(A)$ the unique nonzero projection such that $A(1_A - z) = \ker(A \to Ap : a \mapsto ap)$. Observe that $zp = p$ and the map $\iota : Az \to Ap : az \mapsto azp$ is a unital normal $\ast$-isomorphism. Define the nonzero spectral projection $q = \mathbf 1_{[\frac12 \|\rE_A(\rT_M(p))\|_\infty, \|\rE_A(\rT_M(p))\|_\infty]}(\rE_A(\rT_M(p))) \in \mathcal Z(Az)$ and put $c = (\rE_A(\rT_M(p)) \, q)^{-1/2} \in \mathcal Z(Az)^+$. Then we have $c \, \rE_A(\rT_M(p)) \, c = q$.

Define the map $\Phi : q \langle M, B\rangle q \to A q$ by the formula $\Phi(x) = \iota^{-1}(c \, \rE_A(\rT_M(p x p)) \, c \; p)$ for every $x \in q \langle M, B\rangle q$. Then $\Phi$ is normal and for every $a \in A$, we have
$$
\Phi(aq) = \iota^{-1}(c \, \rE_A(\rT_M(p \, aq \, p)) c \; p) = \iota^{-1}(aq \; c \, \rE_A(\rT_M(p)) \, c \; p) = \iota^{-1}(aq \; p) = aq.
$$
Then $\Phi : q \langle M, B\rangle q \to A q$ is a normal conditional expectation. By Lemma \ref{lem-elementary-properties2} $(\rm i)$, there exists a nonzero projection $r \in (Aq)' \cap qMq \subset A' \cap 1_A M 1_A$ such that $Ar \lessdot_M B$. Using Lemma \ref{lem-elementary-properties1} $(\rm iv)$, we get the conclusion.
\end{proof}

\subsection*{Intertwining theory and ultraproducts}

In this subsection, we investigate how the intertwining theory behaves with respect to ultraproducts. The main result of this subsection, Theorem \ref{thm-intertwining-ultraproduct} below, will be a crucial tool in the proof of Theorems \ref{thmC} and \ref{thmE}. 

\begin{thm}\label{thm-intertwining-ultraproduct}
Let $B \subset M$ be any inclusion of $\sigma$-finite von Neumann algebras with expectation. Let $J$ be any nonempty directed set and $\omega$ any cofinal ultrafilter on $J$. Put $Q = M \vee B^\omega \subset M^\omega$. Assume that at least one of the following conditions is satisfied:
\begin{itemize}
\item [$(\rm i)$] The inclusion $B \subset M$ is regular and $M \npreceq_M B$.
\item [$(\rm ii)$] $B$ is of type ${\rm I}$ and $M \npreceq_{M} B$.
\item [$(\rm iii)$] $B$ is semifinite and $M$ is of type ${\rm III}$.
\end{itemize}
Then in each case, we have $M \npreceq_Q B^\omega$.
\end{thm}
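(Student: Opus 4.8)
The plan is to argue by contradiction in each case: assuming $M \preceq_Q B^\omega$, I will contradict the relevant hypothesis. Throughout, $Q = M \vee B^\omega$ sits inside $M^\omega$ and carries the faithful normal conditional expectations $\rE_{B^\omega}|_Q \colon Q \to B^\omega$ and $\rE_\omega|_Q \colon Q \to M$, with $\rE_\omega(B^\omega) = B$ and $\rE_B \circ (\rE_\omega|_Q) = (\rE_\omega|_{B^\omega}) \circ (\rE_{B^\omega}|_Q)$, so the square $(B \subset M,\ B^\omega \subset Q)$ commutes. Unwinding Definition~\ref{df-intertwining}, the assumption $M \preceq_Q B^\omega$ furnishes a nonzero projection $e \in M$ — which, after subtracting the central projection of $eMe$ supporting the kernel of the $\ast$-homomorphism, we may take so that $\theta \colon eMe \to fB^\omega f$ is an injective unital normal $\ast$-homomorphism onto a subalgebra with faithful normal expectation, for some projection $f \in B^\omega$; in particular a nonzero corner of $M$ embeds with expectation into $B^\omega$. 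This already settles \textbf{Case (iii):} if $M$ is of type ${\rm III}$ then $eMe$ is a nonzero type ${\rm III}$ algebra sitting with faithful normal expectation inside $B^\omega$, but $B^\omega$ is semifinite (exhaust $B$ by corners $p_n B p_n$ with $\Tr_B(p_n) < +\infty$ and $p_n \nearrow 1$, use $(p_n B p_n)^\omega = p_n B^\omega p_n$ and $\bigvee_n p_n = 1$ in $B^\omega$, and glue the suitably rescaled traces of the finite algebras $p_n B^\omega p_n$), so composing its trace with the expectation would make $eMe$ semifinite, a contradiction. (Since a type ${\rm III}$ algebra has no finite-index semifinite subalgebra, hypothesis (iii) forces $M \npreceq_M B$ too, but the contradiction here is purely type-theoretic.)

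\textbf{Case (ii).} Here $B$ is of type ${\rm I}$ and $M \npreceq_M B$, and I pass to continuous cores. By Proposition~\ref{prop-intertwining-semifinite}, $\core(M) \npreceq_{\core(M)} \core(B)$, and both cores are semifinite ($\core(B)$ even type ${\rm I}$). Using the compatibility of the continuous core with Ocneanu ultrapowers, one has $\core(B^\omega) = \core(B)^\omega$ and $\core(Q) = \core(M \vee B^\omega) = \core(M) \vee \core(B)^\omega$ inside $\core(M^\omega) = \core(M)^\omega$ (the modular flow of $\varphi^\omega$ globally preserves $M$, $B^\omega$ and $Q$). Since Popa intertwining passes to continuous cores, $M \preceq_Q B^\omega$ gives $\core(M) \preceq_{\core(Q)} \core(B)^\omega$, and I derive a contradiction by showing this is impossible. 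Pick a finite projection $e_0 \in \core(M)$ of full central support, so $e_0 \core(M) e_0$ is finite and $e_0\core(M)e_0 \npreceq_{\core(M)} \core(B)$; by the analytic characterization of intertwining for finite algebras, \cite[Theorem 4.3]{HI15a}, there is a net of unitaries $(u_j)$ in $e_0 \core(M) e_0$ with $\rE_{\core(B)}(x^* u_j y) \to 0$ $\ast$-strongly for all $x,y$. Since $\rE_{\core(B)^\omega}$ restricts to $\rE_{\core(B)}$ on $\core(M)$ and $\core(Q) = \core(M) \vee \core(B)^\omega$ with $\core(B)^\omega$-bimodularity, the \emph{same} net $(u_j)$ witnesses $e_0\core(M)e_0 \npreceq_{\core(Q)} \core(B)^\omega$, hence $\core(M) \npreceq_{\core(Q)} \core(B)^\omega$ — a contradiction.

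\textbf{Case (i).} Here $B \subset M$ is regular and $M \npreceq_M B$; passing to cores may fail (Theorem~\ref{rem-intertwining}), so one must descend directly. First, $B^\omega \subset Q$ is again regular, since $\mathcal N_M(B) \subset \mathcal N_Q(B^\omega)$ and $\mathcal N_M(B) \cup \mathcal U(B^\omega)$ generates $Q$. By \cite[Theorem 2]{BH16}, $M \preceq_Q B^\omega$ yields a nonzero projection $q \in M' \cap \langle Q, B^\omega\rangle$ with $\widehat{\rE}_{B^\omega}(q) \in Q$. Each $J_Q u J_Q$ with $u \in \mathcal N_M(B)$ commutes with $M$ and normalizes $J_Q B^\omega J_Q$, hence normalizes $M' \cap \langle Q, B^\omega\rangle$; exploiting this normalizer calculus on the basic construction together with the natural relationship between $\langle Q,B^\omega\rangle$, $\langle M,B\rangle$ and their dual operator-valued weights, one descends $q$ to a nonzero positive $d \in M' \cap \langle M, B\rangle$ with $\widehat{\rE}_B(d) \in M$. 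By \cite[Theorem 2]{BH16} this gives $M \preceq_M B$ — a contradiction.

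\textbf{Main obstacle.} The hard part is the descent in Case (i): turning the normalizer calculus into an honest reduction whose output lies inside $\langle M, B\rangle$ (not merely inside $M' \cap \langle Q, B^\omega\rangle$) and checking that $\widehat{\rE}_{B^\omega}$ restricts to $\widehat{\rE}_B$ along it — in effect, pinning down how the basic construction $\langle Q, B^\omega\rangle$ sits over $\langle M, B\rangle$. This genuinely uses regularity: in general $M \preceq_{M^\omega} B^\omega$ does \emph{not} imply $M \preceq_M B$ (the regular inclusion of Theorem~\ref{rem-intertwining} becomes tractable only after one uses regularity), which is exactly why the three separate hypotheses are needed and why there is no uniform argument. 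The ancillary facts "$B$ semifinite $\Rightarrow B^\omega$ semifinite", "$\core(X^\omega) = \core(X)^\omega$" and "$\core(M \vee B^\omega) = \core(M) \vee \core(B)^\omega$" are routine but must be isolated and proved.
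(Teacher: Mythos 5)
Your Case (iii) is fine and matches the paper's one‑line argument. The other two cases each contain a genuine gap. In Case (i) you locate the right tool (\cite[Theorem 2]{BH16}) but then propose to \emph{descend} a projection $q \in M' \cap \langle Q, B^\omega\rangle$ with $\widehat{\rE}_{B^\omega}(q) \in Q$ down to an element of $M' \cap \langle M, B\rangle$ by an unspecified ``normalizer calculus''; you yourself flag this descent as the main obstacle and never carry it out, and it is not clear it can be carried out (a priori $q$ need not have nonzero compression to $\rL^2(M)$, and there is no canonical map $\langle Q, B^\omega\rangle \to \langle M, B\rangle$). The paper goes in the \emph{opposite} direction: since $\rE_{B^\omega}|_M = \rE_B$, the assignment $x e_B y \mapsto x e_{B^\omega} y$ extends to a normal $M$-$M$-bimodular $\ast$-homomorphism $\Theta : \langle M, B\rangle \to \langle Q, B^\omega\rangle$, and regularity is used only to see that $M \cdot \rL^2(B^\omega)$ is dense in $\rL^2(Q)$, i.e.\ that $\Theta$ is unital; composing $\Theta$ with the nonzero normal $M$-$M$-bimodular completely positive map $\Phi : \langle Q, B^\omega\rangle \to M$ provided by \cite[Theorem 2]{BH16} then yields $M \preceq_M B$ directly. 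That reversal of direction is the missing idea.

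In Case (ii) the core-based shortcut does not work as written. The decisive step --- ``the same net $(u_j)$ witnesses $e_0\core(M)e_0 \npreceq_{\core(Q)} \core(B)^\omega$'' --- requires $\rE_{\core(B)^\omega}(x^* u_j y) \to 0$ for $x, y$ ranging over a dense subset of $\core(Q) = \core(M) \vee \core(B)^\omega$. Bimodularity over $\core(B)^\omega$ only lets you strip factors at the two ends of a word; it gives you no control over alternating words $m_1 b_1' m_2 b_2' \cdots$ with $b_k' \in \core(B)^\omega$ in the middle, and without regularity there is no way to rewrite such words so that the $\core(B)^\omega$-letters migrate to the ends. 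This is exactly the point where regularity is used in Case (i), and it is unavailable here. (Two of your ``routine'' ancillary facts are also false as stated: by \cite[Theorem 1.10]{MT13} one only has the inclusions $\core_\varphi(B) \subset \core_{\varphi^\omega}(B^\omega) \subset \core_\varphi(B)^\omega$, not the equality $\core(B^\omega) = \core(B)^\omega$, and similarly for $M$.) The paper's actual proof of (ii) is structured precisely to manufacture regularity: it reduces by homogeneous decomposition and Kadison's theorem to maximal abelian subalgebras, so that the relevant inclusions $B_i \subset A$ are abelian and hence regular, applies Case (i) there, and climbs back up via Lemmas \ref{lemma3 for type I}, \ref{lemma1 for type I} and \ref{lemma2 for type I}, obtaining along the way the stronger conclusion $M \npreceq_{M^\omega} B^\omega$. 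Some such reduction is needed; your argument as it stands does not close.
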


Before going into the proof of Theorem \ref{thm-intertwining-ultraproduct}, we need to prove a few technical lemmas.

\begin{lem}\label{lemma3 for type I}
	Let $M$ be any $\sigma$-finite von Neumann algebra and $A \subset 1_A M 1_A$ and $B \subset 1_{B}M1_{B}$ any von Neumann subalgebras with expectation. Let $(z_i)_{i \in I}$ (resp.\ $(q_j)_{j \in J}$) be any family of nonzero pairwise orthogonal projections in $\mathcal{Z}(A)$ (resp.\ in $B'\cap 1_B M 1_B$) such that $\sum_{i \in I} z_i =1_A$ (resp.\ $\sum_{j \in J} q_j = 1_B$). 

Then we have that $A\preceq_M B$ if and only if there exists $(i, j) \in I \times J$ such that $Az_i \preceq_M Bq_j$.
\end{lem}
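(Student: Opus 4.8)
My plan is to prove both implications directly by manipulating the intertwining data of Definition~\ref{df-intertwining}. The only genuine subtlety is that the $q_j$ lie in $B'\cap 1_BM1_B$ rather than in $B$, so corners of $B$ cut by the $q_j$ must be traded for honest corners of $B$.

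\emph{From a piece to the whole.} Suppose $Az_i\preceq_M Bq_j$, witnessed by projections $e\in Az_i$, $f\in Bq_j$, a nonzero partial isometry $v\in eMf$ and a unital normal $\ast$-homomorphism $\theta\colon e(Az_i)e=eAe\to f(Bq_j)f$ with expectation, with $av=v\theta(a)$. Let $z_j\in\mathcal Z(B)$ be the smallest central projection of $B$ dominating $q_j$; since $q_j\in B'$ one checks (via $q_j=uq_ju^\ast$ for $u\in\mathcal U(B)$) that $bq_j=0$ implies $bz_j=0$, so $\tau\colon Bz_j\to Bq_j$, $bz_j\mapsto bq_j$, is a normal $\ast$-isomorphism, and $\tau^{-1}(y)q_j=y$ for $y\in Bq_j$. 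Put $g:=\tau^{-1}(f)\in Bz_j\subset B$, so that $f=gq_j\le g$, and $\widetilde\theta:=\tau^{-1}\circ\theta\colon eAe\to gBg$, a unital normal $\ast$-homomorphism whose range is the corner $gBg$ of $B$, with expectation (transport the one for $\theta$). Now $v=vf=vg$, and for all $a\in eAe$
\[
av=v\theta(a)=v\,\widetilde\theta(a)\,q_j=v\,\widetilde\theta(a),
\]
the last step because $v\,\widetilde\theta(a)=v(gq_j)\widetilde\theta(a)=vg\,\widetilde\theta(a)\,q_j=v\,\widetilde\theta(a)\,q_j$ (using $q_j\in B'$ and $\widetilde\theta(a)\in gBg$). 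Hence $(e,g,v,\widetilde\theta)$ witnesses $A\preceq_M B$.

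\emph{From the whole to a piece.} Suppose $A\preceq_M B$, witnessed by $e\in A$, $f\in B$, a nonzero $v\in eMf$ and $\theta\colon eAe\to fBf$ with expectation, $av=v\theta(a)$. Since $\sum_i ez_i=e$ is a decomposition into central projections of $eAe$, the projections $f_i:=\theta(ez_i)$ are pairwise orthogonal in $fBf$ with $\sum_i f_i=f$; as each $q_j$ commutes with $f_i\in B$, this refines to $f=\sum_{i,j}f_iq_j$, and since $v=vf\ne0$ I can fix $(i_0,j_0)$ with $v':=vf_{i_0}q_{j_0}\ne0$. I would then set $\widetilde e:=ez_{i_0}\in Az_{i_0}$, $\widetilde f:=f_{i_0}q_{j_0}\in Bq_{j_0}$, and $\widetilde\theta\colon\widetilde eA\widetilde e=z_{i_0}(eAe)\to\widetilde f(Bq_{j_0})\widetilde f$, $\widetilde\theta(a):=\theta(a)q_{j_0}$, a unital normal $\ast$-homomorphism. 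Using $av=v\theta(a)$ with $a=ez_{i_0}$ gives $vf_{i_0}=z_{i_0}v$, whence $v'=z_{i_0}vq_{j_0}$ has left support under $\widetilde e$ and $\lvert v'\rvert^2\le\widetilde f$, and one computes $av'=v'\widetilde\theta(a)$ for $a\in\widetilde eA\widetilde e$. Since this relation forces $\lvert v'\rvert\in\widetilde\theta(\widetilde eA\widetilde e)'$, the polar decomposition $v'=w\lvert v'\rvert$ produces a nonzero partial isometry $w\in\widetilde eM\widetilde f$ with $aw=w\widetilde\theta(a)$; this polar-decomposition step is needed because $v'$ itself is generally not a partial isometry.

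\emph{The main obstacle} is the ``with expectation'' requirement for $\widetilde\theta(\widetilde eA\widetilde e)\subset\widetilde f(Bq_{j_0})\widetilde f$, since $q_{j_0}\notin B$. Writing $N_0:=f_{i_0}Bf_{i_0}$ and $P:=f_{i_0}\theta(eAe)f_{i_0}\subset N_0$ (an inclusion with expectation), one has $\widetilde f(Bq_{j_0})\widetilde f=N_0q_{j_0}$ and $\widetilde\theta(\widetilde eA\widetilde e)=Pq_{j_0}$. As $q_{j_0}$ commutes with $N_0$, the set $\{x\in N_0:xq_{j_0}=0\}$ is a $\sigma$-weakly closed two-sided ideal, hence equals $N_0(f_{i_0}-r)$ for a central projection $r\in\mathcal Z(N_0)$, and multiplication by $q_{j_0}$ identifies the inclusion $Pr\subset N_0r$ with $Pq_{j_0}\subset N_0q_{j_0}$. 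Since $r\in P'\cap N_0$, the inclusion $Pr\subset rN_0r=N_0r$ is with expectation by \cite[Proposition 2.2]{HU15b}, hence so is $Pq_{j_0}\subset N_0q_{j_0}$. Therefore $(\widetilde e,\widetilde f,w,\widetilde\theta)$ witnesses $Az_{i_0}\preceq_M Bq_{j_0}$, which completes the proof.
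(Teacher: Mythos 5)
Your proof is correct, and it is essentially the argument the paper intends: the paper's ``proof'' is a one-line reference to \cite[Remark 4.2 (2)]{HI15a} (which treats $q_j\in\mathcal Z(B)$), and what you write out is exactly that witness-decomposition argument, together with the two extra steps needed because $q_j$ only lies in $B'\cap 1_BM1_B$ --- the isomorphism $Bz_j\cong Bq_j$ in one direction and the transfer of the expectation through $N_0r\cong N_0q_{j_0}$ via \cite[Proposition 2.2]{HU15b} in the other. These are precisely the details the paper leaves to the reader, and you handle them correctly.
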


\begin{proof}
The proof is exactly the same as the one in \cite[Remark 4.2 (2)]{HI15a} where the case $q_j \in \mathcal{Z}(B)$ is considered.
\end{proof}

\begin{lem}\label{lemma1 for type I}
	Let $M$ be any $\sigma$-finite von Neumann algebra and $A \subset 1_A M 1_A$ and $B \subset 1_{B}M1_{B}$ any von Neumann subalgebras with expectation. The following assertions hold true.
\begin{itemize}
	\item[$\rm (i)$]  Assume that $A$ is finite and of type ${\rm I}$. Then for every maximal abelian subalgebra $C\subset A$, we have that $A\preceq_M B$ if and only if $C \preceq_M B$.
	\item[$\rm (ii)$] Assume that $B$ is finite and of type ${\rm I}$. Then for every maximal abelian subalgebra $C\subset B$, we have that $A\preceq_M B$ if and only if $A \preceq_M C$.
\end{itemize}
\end{lem}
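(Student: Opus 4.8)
The two statements are symmetric up to passing to the reduction $eMe$ by a projection of the relevant algebra, so the plan is to prove $(\rm i)$ carefully and then explain how $(\rm ii)$ follows by the same argument applied on the other side. The easy direction in $(\rm i)$ is that $C \preceq_M B \Rightarrow A \preceq_M B$: this is immediate because $C \subset A$ is a von Neumann subalgebra with expectation (namely the unique trace-preserving conditional expectation, using that $A$ is finite and of type ${\rm I}$), and $\preceq_M$ is monotone in the first variable, i.e.\ $C \subset A$ with expectation and $C \preceq_M B$ imply $A \preceq_M B$ by \cite[Lemma 4.8]{HI15a}. So the real content is the converse: if $A \preceq_M B$ then $C \preceq_M B$ for every maximal abelian $C \subset A$.

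For the converse, first I would reduce to the homogeneous case. Since $A$ is finite and of type ${\rm I}$, it decomposes as $A = \bigoplus_{n} A z_n$ with $A z_n \cong \mathcal Z(A z_n) \ovt \mathbf M_n(\C)$, the $z_n$ pairwise orthogonal central projections summing to $1_A$; and $C z_n$ is then maximal abelian in $A z_n$. By Lemma \ref{lemma3 for type I} (applied to the family $(z_n)_n$ in $\mathcal Z(A)$, with the trivial family on the $B$ side), $A \preceq_M B$ iff $A z_n \preceq_M B$ for some $n$, and likewise $C \preceq_M B$ iff $C z_n \preceq_M B$ for some $n$; so it suffices to show that for a fixed $n$, if $A z_n \preceq_M B$ then $C z_n \preceq_M B$. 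Thus we may assume $A = D \ovt \mathbf M_n(\C)$ with $D$ abelian and $C = D \ovt \Delta_n$ where $\Delta_n \subset \mathbf M_n(\C)$ is the diagonal. The point now is that $A$ and $C$ differ by a finite-index inclusion of a very concrete kind, and $\preceq_M$ should be insensitive to that. Concretely, let $(e_{k,k})_{1 \le k \le n}$ be the diagonal matrix units of $\mathbf M_n(\C)$, so $f_k = 1_D \otimes e_{k,k} \in C$ are projections in $A$ with $\sum_k f_k = 1_A$ and $f_1 A f_1 = D \otimes \C e_{1,1} \cong D = C f_1$. The matrix units give partial isometries in $A$ conjugating $f_1 A f_1$ onto each $f_k A f_k$, hence $A \preceq_M B$ implies (using \cite[Remark 4.2]{HI15a}-type manipulations, or Lemma \ref{lemma3 for type I} together with the fact that a corner of $A$ embeds iff $A$ does when the corner has full central support) that $f_1 A f_1 \preceq_M B$; but $f_1 A f_1 = C f_1 \subset C$, and $C f_1 \preceq_M B \Rightarrow C \preceq_M B$ again by the full-central-support reduction on the abelian algebra $C$. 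Assembling: $A \preceq_M B \Rightarrow f_1 A f_1 \preceq_M B \Rightarrow C f_1 \preceq_M B \Rightarrow C \preceq_M B$, as desired.

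For part $(\rm ii)$, the roles are reversed: one wants $A \preceq_M B \iff A \preceq_M C$ with $C$ maximal abelian in the finite type ${\rm I}$ algebra $B$. The direction $A \preceq_M C \Rightarrow A \preceq_M B$ is immediate by monotonicity in the second variable (enlarging the target, with $C \subset B$ with expectation). For the converse, one runs the mirror image of the above: decompose $B = \bigoplus_n B w_n$ into homogeneous pieces using a family of central projections $(w_n)_n$ in $B$ — which on the $B$-side of $\preceq_M$ is handled by Lemma \ref{lemma3 for type I} via the family $(w_n)_n$ in $B' \cap 1_B M 1_B \supset \mathcal Z(B)$ — reduce to $B = D' \ovt \mathbf M_n(\C)$, $C = D' \ovt \Delta_n$, and observe that $g_k = 1_{D'} \otimes e_{k,k}$ are projections in $C \subset B$ summing to $1_B$ with $g_1 B g_1 = C g_1$. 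Then $A \preceq_M B$, together with the family $(g_k)_k$ of projections in $B' \cap 1_B M 1_B$ (here $(g_k)_k$ lie in $B$ itself, but the relevant invariance of $\preceq_M$ under cutting the target by projections of $B$ is exactly \cite[Remark 4.2]{HI15a}), forces $A \preceq_M B g_1 = C g_1 \subset C$, hence $A \preceq_M C$. The main obstacle I anticipate is purely bookkeeping: getting the precise statements of the stability properties of $\preceq_M$ from \cite[Remark 4.2]{HI15a} — how $\preceq_M$ behaves when one cuts the source by a projection with full central support, and when one cuts the target by a central-in-$B$ (or, via Lemma \ref{lemma3 for type I}, a $(B'\cap M)$-) projection — lined up so that the chain of implications above is literally justified; the geometry (matrix units, diagonal subalgebra) is completely elementary and the heart of the argument is just noticing that $C \subset A$ is built from a finite partition of unity with the corner identification $f_1 A f_1 = C f_1$.
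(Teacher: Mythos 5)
There is a genuine gap in part (i): you have the direction of \cite[Lemma 4.8]{HI15a} backwards, and as a consequence one of the two implications is never actually proved. That lemma (as it is used elsewhere in this paper, e.g.\ in Proposition \ref{prop-intertwining-semifinite}) says that if $D \subset A$ is a von Neumann subalgebra with expectation and $A \preceq_M B$, then $D \preceq_M B$; equivalently, $D \npreceq_M B$ forces $A \npreceq_M B$. So the implication it gives for free is $A \preceq_M B \Rightarrow C \preceq_M B$ --- precisely the one you then spend the bulk of the argument re-deriving. The implication you dismiss as ``immediate by monotonicity in the first variable'', namely $C \preceq_M B \Rightarrow A \preceq_M B$, is false for general with-expectation subalgebras (take $C = \C 1_A$), and it is exactly the substantive half of the statement: it is where the hypothesis that $C$ is maximal abelian in a finite type ${\rm I}$ algebra must enter. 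Your corner-cutting chain $A \preceq_M B \Rightarrow f_1 A f_1 \preceq_M B \Rightarrow C f_1 \preceq_M B \Rightarrow C \preceq_M B$ does repair this, but only once you observe that every link is an equivalence --- $A \preceq_M B \Leftrightarrow f_1 A f_1 \preceq_M B$ by the full-central-support remark \cite[Remark 4.2 (4)]{HI15a}, $f_1 A f_1 = C f_1$ identically, and $C \preceq_M B \Leftrightarrow C f_k \preceq_M B$ for some $k$ by Lemma \ref{lemma3 for type I}, with the various $f_k$ interchangeable via the matrix units --- and then read the chain in both directions. That two-way chain is exactly the paper's proof.

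Two smaller points. First, the step ``we may assume $C = D \ovt \Delta_n$'' is not automatic: it is Kadison's diagonalization theorem \cite[Theorem 3.19]{Ka82} that a maximal abelian subalgebra of a finite type ${\rm I}$ algebra is unitarily conjugate to the diagonal one, and without it the identity $f_1 A f_1 = C f_1$ has no basis; the paper cites this explicitly and you must too. Second, part (ii) is sound in outline: there the easy direction really is $A \preceq_M C \Rightarrow A \preceq_M B$ (enlarging the target through a with-expectation inclusion is legitimate), and your reduction to $A \preceq_M B g_1 = C g_1$ matches the paper's argument, which handles the cutting of the target by a projection of full central support via \cite[Remark 4.5]{HI15a}.
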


\begin{proof}
$(\rm i)$ Write $A=\bigoplus_{n \geq 1} A_n \ovt \mathbf M_n(\C)$ where $A_n$ is an abelian von Neumann algebra for every $n \geq 1$. Using \cite[Theorem 3.19]{Ka82} and up to conjugating by a unitary in $A$, we may assume that $C=\bigoplus_{n \geq1} A_n \ovt \C^n$. 
For every $n \geq 1$, denote by $(e_{i,j}^n)_{i,j=1}^n$ the matrix unit of $\mathbf M_n(\C)$. By Lemma \ref{lemma3 for type I}, we have that $A \preceq_M B$ if and only if $A_n \ovt \mathbf M_n \preceq_M B$ for some $n\geq 1$, which is equivalent to $A_n\ovt \C e^n_{i,i} \preceq_M B$ for some $n \geq 1$ and $i\in \{1,\ldots,n\}$  by \cite[Remark 4.2 (4)]{HI15a}. Observe that all such projections $1_{A_n}\otimes e_{i,i}^n$ are in $C$ and $C(1_{A_n}\otimes e_{i,i}^n)=A_n \ovt \C e_{i,i}^n $. Since $\sum_{n\geq 1}\sum_{i=1}^n 1_{A_n}\otimes e_{i,i}^n =1_A$, again by Lemma \ref{lemma3 for type I}, this condition is equivalent to $C\preceq_M B$.

$(\rm ii)$ Using \cite[Remark 4.5]{HI15a} and Lemma \ref{lemma3 for type I}, the exact same argument works for the case when $B$ is finite and of type ${\rm I}$.
\end{proof}

\begin{lem}\label{lemma2 for type I}
Let $N \subset M$ be any inclusion of abelian $\sigma$-finite von Neumann algebras and $A \subset N 1_A$ and $B \subset N1_{B}$ any von Neumann subalgebras. If $A\not\preceq_N B$, then $A\not\preceq_{M}B$.
\end{lem}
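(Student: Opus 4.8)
The plan is to prove the contrapositive: assuming $A \preceq_M B$, I will produce a witness for $A \preceq_N B$. The feature that makes this possible — and the only place where both hypotheses on $N \subset M$ are used — is that a $\sigma$-finite abelian von Neumann algebra carries a faithful normal state, which is automatically a trace, so its modular automorphism group is trivial; by Takesaki's theorem every von Neumann subalgebra is then with expectation, and in particular there is a faithful normal conditional expectation $\rE^M_N : M \to N$. (For the same reason all the subalgebras occurring below are automatically with expectation, so that part of Definition~\ref{df-intertwining} needs no further thought.)

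Fix $e \in A$, $f \in B$, a nonzero partial isometry $v \in eMf$ and a unital normal $*$-homomorphism $\theta : eAe \to fBf$ with $av = v\theta(a)$ for all $a \in eAe$, as in Definition~\ref{df-intertwining}. Since $M$ is abelian, $p := v^*v = vv^*$ is a projection with $p \le ef$, and multiplying the intertwining identity on the left by $v^*$ rewrites it as $ap = p\theta(a)$ for all $a \in eAe$. Now put $p' := \rE^M_N(p)$; this is a nonzero positive contraction in $N$ with $p' \le ef$, and since $eAe$ and $\theta(eAe)$ both lie in $N$, applying $\rE^M_N$ to $ap = p\theta(a)$ and taking adjoints gives the two identities $ap' = p'\theta(a)$ and $p'a = \theta(a)p'$ for every $a \in eAe$. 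A short computation from these shows that $p'^2$, hence $p'$, hence its support projection $q := \supp(p') \in N$, commutes with both $eAe$ and $\theta(eAe)$; then $ap' = p'\theta(a) = \theta(a)p'$ forces $(a - \theta(a))p' = 0$, and since $a - \theta(a)$ commutes with $p'$ this propagates to $(a - \theta(a))q = 0$, that is $aq = q\theta(a)$, for all $a \in eAe$. As $q$ is a nonzero projection with $q \le ef$, it is a nonzero partial isometry in $eNf$, and $(e, f, q, \theta)$ witnesses $A \preceq_N B$.

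I do not expect a genuine obstacle here; the one point that must not be skipped is the existence of $\rE^M_N$, since for a general (non-expected) inclusion the statement can fail. One could alternatively argue through the analytic criterion of \cite[Theorem 4.3]{HI15a}: $A$ is abelian, hence finite, so $A \npreceq_N B$ yields a net of unitaries $u_j \in \mathcal U(A)$ with $\rE_B(x^* u_j y) \to 0$ $*$-strongly for all $x, y \in 1_A N 1_B$, and writing the conditional expectation onto $B$ as $\rE_B \circ \rE^M_N$ and using commutativity to pull the central $u_j$ out — $\rE^M_N(x^* u_j y) = u_j\,\rE^M_N(x^* y)$ — one checks that the same net witnesses $A \npreceq_M B$.
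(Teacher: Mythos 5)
Your proof is correct and takes essentially the same route as the paper: both arguments pass from the $M$-witness $v$ to a projection in $N$ that carries the same intertwining relation, the only difference being that the paper obtains that projection as the support of the homomorphism $N \to Nvv^* : x \mapsto xvv^*$, whereas you obtain it as the support of $\rE^M_N(vv^*)$ — and these are the same projection, since for $x \in N$ one has $xvv^* = 0$ if and only if $x\,\rE^M_N(vv^*)\,x^* = 0$. Your observation that $\rE^M_N$ exists (abelian $\sigma$-finite, hence tracial, hence every subalgebra is with expectation) is valid but not needed for the paper's version of the argument.
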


\begin{proof}
Assume that $A \preceq_{M}B$. Then there exist projections $e \in A$, $f \in B$, a nonzero partial isometry $v \in e M f$ and a unital normal $\ast$-homomorphism $\theta : Ae \to Bf$ such that $av = v \theta(ae)$ for every $a \in A$. Observe that $0 \neq vv^{*} = v^{*}v \leq ef$. For every $a \in A$, we have $ae \,  v = v \, \theta(a e) = \theta(ae) \, v$ and so $ae \, vv^{*} = \theta(ae) \, vv^{*}$. Denote by $z \in N$ the unique projection such that $\ker (N \to N vv^{*} : x \mapsto x \, vv^{*}) = Nz^{\perp}$. Since $z \, vv^{*} = vv^{*}$, we have $vv^{*} \leq z$. For every $a \in A$, since $ae , \theta(ae) \in N$, using the $\ast$-isomorphism $Nvv^{*} \to Nz : yvv^{*} \mapsto yz$, we have $ae \,  z = \theta(ae) \, z = z \, \theta(ae)$ and so $ae \,  ezf = ezf \, \theta(ae)$. Since $ezf \neq 0$, we obtain $A\preceq_N B$.
\end{proof}

\begin{proof}[Proof of Theorem \ref{thm-intertwining-ultraproduct}]

$(\rm i)$ Choose a faithful normal conditional expectation $\rE_B : M \to B$ and a faithful state $\varphi \in M_\ast$ such that $\varphi = \varphi \circ \rE_B$. Then all the von Neumann subalgebras $M, B^\omega, Q \subset M^\omega$ are globally invariant under $\sigma^{\varphi^\omega}$ and thus they are all with expectation. Assume that $M \preceq_Q B^\omega$. By \cite[Theorem 2]{BH16}, there exists a nonzero normal $M$-$M$-bimodular completely positive map $\Phi : \langle Q, B^\omega \rangle \to M$. For every $x \in M$, we have $\rE_{B^\omega}(x) = \rE_B(x)$. It follows that the map $\Theta : \langle M, B \rangle \to \langle Q, B^\omega \rangle : xe_B y \mapsto x e_{B^\omega} y$ extends to a well defined normal $\ast$-homomorphism that is moreover $M$-$M$-bimodular. Observe that $\mathcal N_M(B) \subset \mathcal N_{Q}(B^\omega)$. Since $B \subset M$ is regular, we have $\mathcal N_M(B)\dpr = M$ and thus the $\rL^2$-closure of $M\cdot  \rL^2(B^\omega)$ is equal to $\rL^2(Q)$. This implies that the map $\Theta$ is unital. Then $\Phi \circ \Theta : \langle M, B\rangle \to M$ is a nonzero normal $M$-$M$-bimodular completely positive map and thus $M \preceq_M B$ by \cite[Theorem 2]{BH16}.

$(\rm ii)$ The proof of this item is the most involved. Assume that $M \npreceq_{M} B$. We will actually prove the stronger fact that  $M  \npreceq_{M^\omega} B^\omega$. 

Denote by $z\in \mathcal{Z}(M)$ the unique central projection such that $Mz$ is of type ${\rm I}$ and that $M(1-z)$ has no nonzero type ${\rm I}$ direct summand. Then $M(1-z) \not\preceq_{M^\omega} B^\omega$ trivially holds. By Lemma \ref{lemma3 for type I}, in order to prove that $M \not\preceq_{M^\omega} B^\omega $, it suffices to show that $Mz \not\preceq_{M^\omega} B^\omega$, which is equivalent to $Mz \not\preceq_{(Mz)^\omega} (Bz)^\omega $ since $z \in \mathcal Z(M) \subset \mathcal Z(M^{\omega})$. Without loss of generality, we may assume that $z=1$ and $M$ is of type ${\rm I}$.

	Put $\N^{*} = \N \setminus \{0\}$. Since $M$ is of type ${\rm I}$, there is a family of nonzero pairwise orthogonal projections $(z_n)_{n\in S}$ in $\mathcal{Z}(M)$ (with $S \subset \N^{*} \cup \{\infty\}$) such that $Mz_n$ is of type ${\rm I}_n$ for all $n\in S$. By Lemma \ref{lemma3 for type I}, we have that $Mz_n \not\preceq_{M}B$ for every $n\in S$ and so $Mz_n \not\preceq_{Mz_n}Bz_n$ for every $n\in S$. We will show that $Mz_n \not\preceq_{(Mz_{n})^\omega} (Bz_{n})^\omega$ for every $n\in S$. Since $z_{n} \in \mathcal Z(M) \subset \mathcal Z(M^{\omega})$, this will imply that $Mz_n \not\preceq_{M^\omega} B^\omega$ for every $n\in S$ and we will obtain $M\not\preceq_{M^\omega} B^\omega$ by Lemma \ref{lemma3 for type I}. Thus, we may assume from now on that $M$ is of type ${\rm I}_n$ for some $n\in\N^{*} \cup \{\infty\}$.

{\bf Assume first that $M$ is of type ${\rm I}_n$ for some $n\in \N^{*}$.} Write $M = A\ovt \mathbf M_n=\bigoplus_{i,j=1}^n A\ovt \C e_{i,j}$ where $A \otimes \C 1_{n} \subset M$ is an abelian von Neumann subalgebra and $(e_{i,j})_{i,j}$ is the matrix unit in $\mathbf M_{n}(\C)$. Let $C_B \subset B$ be a maximal abelian subalgebra and choose a maximal abelian subalgebra $C_M\subset M$ that contains $C_B$. Since $M \npreceq_{M} B$, we have $C_M \not\preceq_M C_B$ by Lemma \ref{lemma1 for type I} and so $C_M \not\preceq_{C_M} C_B$. Using \cite[Theorem 3.19]{Ka82} and up to conjugating by a unitary $u \in M$ and replacing $B$ by $u B u^{*}$ (we still have $M \npreceq_{M} uBu^{*}$), we may assume that $C_M = A \ovt \C^{n} = \bigoplus_{i=1}^n A\ovt \C e_{i,i}$. Since $C_B \subset C_M$, we have $C_B \subset \sum_{i=1}^n B_i \ovt \C e_{i,i}$ where $B_i\subset A$ is a unital von Neumann subalgebra given by $B_i \ovt \C e_{i, i} =  C_{B} (1_{A} \otimes e_{i,i}) $ for each $1\leq i \leq n$. We claim that $A \not\preceq_{A} B_i$ for every $1\leq i \leq n$. Indeed, for every $1\leq i \leq n$, put $q_i=1_A \otimes e_{i,i}$ and observe that $q_{i} \in C_M$, $q_{i} \in C_B' \cap C_M$ and $\sum_{i=1}^n q_i = 1$. Using Lemma \ref{lemma3 for type I}, since $C_M \not\preceq_{C_M} C_B$, we have $C_M q_i \not\preceq_{C_M} C_B q_i$ and so $C_M q_i \not\preceq_{C_M q_i} C_B q_i$ for every $1\leq i \leq n$. Since $C_Mq_i = A\ovt \C e_{i,i}$ and $C_B q_i = B_i \ovt \C e_{i,i}$, we indeed have $A \not\preceq_{A} B_i$ for every $1\leq i \leq n$.

Since $A$ is abelian, the inclusion $B_i \subset A$ is regular. By item $(\rm i)$, we have  $A\not \preceq_{A\vee B_i^\omega} B_i^\omega$ for every $1\leq i \leq n$. Since $A\vee B_i^\omega \subset A^{\omega}$, by Lemma \ref{lemma2 for type I}, we obtain $A\not \preceq_{A^\omega} B_i^\omega$ for every $1\leq i \leq n$. Observe that for every $1 \leq i \leq n$, we have
	$$C_Mq_i=A\ovt \C e_{i,i}, \quad q_i M^\omega q_i = A^\omega \ovt \C e_{i,i}, \quad  C_B^\omega q_i=B_i^\omega \ovt \C e_{i,i}.$$
This implies that $C_M q_i \npreceq_{q_i M^\omega q_i} C_B^\omega q_i$ and so $C_M q_i \npreceq_{M^\omega} C_B^\omega q_i$ for every $1 \leq i \leq n$. Put $v_{i, j} = 1_{A} \otimes e_{i, j}$ for all $1 \leq i, j \leq n$. Since $v_{i, j}^{*}v_{i, j} = q_{j}$, $v_{i, j}v_{i, j}^{*} = q_{i}$ and $v_{i, j}^{*} \, C_{M}q_{i} \, v_{i, j} = C_{M}q_{j}$, this further implies that $C_M q_i \npreceq_{M^\omega} C_B^\omega q_j$ for all $1 \leq i, j \leq n$. By Lemma \ref{lemma3 for type I}, we obtain $C_M \npreceq_{M^\omega} C_B^\omega$. Since $C_B^\omega \subset B^\omega$ is maximal abelian (see e.g.\ \cite[Theorem A.1.2]{Po95}), we finally obtain $M \not \preceq_{M^\omega} B^\omega$ by Lemma \ref{lemma1 for type I}.

{\bf Assume now that $M$ is of type ${\rm I}_\infty$.} We first study the case when $B$ is finite. Since $B\subset M$ is with expectation, there is a family of nonzero pairwise orthogonal projections $(q_j)_{j \in J}$ in $B'\cap M$ such that $\sum_{j \in} q_j=1$ and each $q_j$ is finite in $M$. Indeed, observe that any semifinite trace on $M$ is still semifinite on $B'\cap M$ (see e.g.\ \cite[Theorem 4.3 $(1) \Rightarrow (2)$]{HI15a}.) By contradiction, assume that $M \preceq_{M^\omega} B^\omega$. By Lemma \ref{lemma3 for type I}, there exists $j \in J$ such that $M \preceq_{M^\omega} B^\omega q_j$. Letting $q=q_j$ and writing $z$ for the central support projection of $q$ in $M$, we have $Mz \preceq_{M^\omega z} B^\omega q$. By \cite[Remark 4.2 (4)]{HI15a}, we have $qMq \preceq_{M^\omega z} B^\omega q$ and so $qMq \preceq_{q M^\omega q} B^\omega q$, that is, $qMq \preceq_{(qMq)^\omega} (Bq)^\omega$. Since $qMq$ is finite, the result we obtained above shows that $qMq \preceq_{qMq} Bq$ and so $M \preceq_M B$. This is a contradiction.

We study now the case when $B$ is semifinite. Let $q\in B$ be any finite projection with central support equal to $1$. Since $M \not \preceq_M B$, we have $qMq \not \preceq_{qMq} qBq$. Using the result in the last paragraph, we obtain $qMq \not \preceq_{(qMq)^\omega} (qBq)^\omega$ and so $qMq \not \preceq_{M^\omega} qB^\omega q$. Since $q$ has the central support equal $1$ in both $M$ and $B^\omega$, we finally obtain $M \not \preceq_{M^\omega} B^\omega $ by \cite[Remarks 4.2 (4) and 4.5]{HI15a}.

$(\rm iii)$ If $B$ is semifinite then $B^\omega$ is semifinite as well. Since $M$ is of type ${\rm III}$ and $B^\omega$ is semifinite, we necessarily have $M \npreceq_{Q} B^\omega$.
\end{proof}

\section{Proofs of the main results}\label{section:results}

\subsection*{Amalgamated free product von Neumann algebras}

For each $i \in \{ 1, 2 \}$, let $B \subset M_i$ be any inclusion of $\sigma$-finite von Neumann algebras with faithful normal conditional expectation $\rE_i : M_i \to B$. The {\em amalgamated free product} $(M, \rE) = (M_1, \rE_1) \ast_B (M_2, \rE_2)$ is a pair of von Neumann algebra $M$ generated by $M_1$ and $M_2$ and faithful normal conditional expectation $\rE : M \to B$ such that $M_1, M_2$ are {\em freely independent} with respect to $\rE$:
$$\rE(x_1 \cdots x_n) = 0 \; \text{ whenever } \; n \geq 1, \; x_j \in M_{i_j}^\circ \; \text{ and } \; i_1 \neq \cdots \neq  i_{n}.$$
Here and in what follows, we denote by $M_i^\circ = \ker(\rE_i)$. We refer to the product $x_1 \cdots x_n$ where $x_j \in M_{i_j}^\circ$ and $i_1 \neq \cdots \neq i_{n}$ as a {\em reduced word} in $M_{i_1}^\circ \cdots M_{i_n}^\circ$ of {\em length} $n \geq 1$. The linear span of $B$ and of all the reduced words in $M_{i_1}^\circ \cdots M_{i_n}^\circ$ where $n \geq 1$ and $i_1 \neq \cdots \neq i_{n}$ forms a unital strongly dense $\ast$-subalgebra of $M$. We call the resulting $M$ the \emph{amalgamated free product von Neumann algebra} of $(M_1,\rE_1)$ and $(M_2,\rE_2)$. We simply write AFP for amalgamated free product.

When $B$ is a semifinite von Neumann algebra with faithful normal semifinite trace $\Tr$ and the weight $\Tr \circ \rE_i$ is tracial on $M_i$ for every $i \in \{1, 2\}$, then the weight $\Tr \circ \rE$ is  tracial on $M$ (see \cite[Proposition 3.1]{Po90} for the finite case and \cite[Theorem 2.6]{Ue98} for the general case). In particular, $M$ is a semifinite von Neumann algebra. In that case, we refer to $(M, \rE) = (M_1, \rE_1) \ast_B (M_2, \rE_2)$ as a {\em semifinite} AFP von Neumann algebra.

Let $\varphi \in B_\ast$ be any faithful state. Then for all $t \in \R$, we have $\sigma_t^{\varphi \circ \rE} = \sigma_t^{\varphi \circ \rE_1} \ast_B \sigma_t^{\varphi \circ \rE_2}$ (see \cite[Theorem 2.6]{Ue98}). By \cite[Theorem IX.4.2]{Ta03}, there exists a unique $\varphi\circ \rE$-preserving conditional expectation $\rE_{M_1} : M \to M_1$. Moreover, we have $\rE_{M_1}(x_1 \cdots x_n) = 0$ for all the reduced words $x_1 \cdots x_n$ that contain at least one letter from $M_2^\circ$ (see e.g.\ \cite[Lemma 2.1]{Ue10}). We denote by $M \ominus M_1 = \ker(\rE_{M_1})$. For more on AFP von Neumann algebras, we refer the reader to \cite{BHR12, Po90, Ue98, Ue10, Ue12, Vo85, VDN92}.

For the rest of this section, we fix an inclusion of $\sigma$-finite von Neumann algebras $B \subset M_i$ with faithful normal conditional expectation $\rE_i : M_i \to B$ for every $i \in \{1, 2\}$. We then denote by $(M, \rE) = (M_1, \rE_1) \ast_B (M_2, \rE_2)$ (or simply by $M = M_{1} \ast_{B} M_{2}$) the corresponding AFP von Neumann algebra.

We first recall a key result about amenable absorption in arbitrary AFP von Neumann algebras due to Boutonnet--Houdayer \cite[Main Theorem]{BH16}. We state the following slightly more general version for our purposes.

\begin{thm}[{\cite[Main Theorem]{BH16}}]\label{thm-maximal-amenability}
Keep the same notation as above. Let $Q \subset M$ be any von Neumann subalgebra with expectation. Assume that the following conditions are satisfied:
\begin{itemize}
\item [$(\rm i)$] $Q \lessdot_M M_1$
\item [$(\rm ii)$] There exists a von Neumann subalgebra $A \subset Q \cap M_1$ that is with expectation in $M_1$ such that $A \npreceq_{M_1} B$. 
\end{itemize}
Then we have $Q \subset M_1$.
\end{thm}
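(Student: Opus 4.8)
The plan is to obtain this statement as an immediate consequence of the Main Theorem of \cite{BH16}, so that the work reduces to matching hypotheses. The Main Theorem of \cite{BH16} asserts that in an AFP $M = M_1 \ast_B M_2$ of $\sigma$-finite von Neumann algebras, a von Neumann subalgebra $A \subset M_1$ with expectation satisfying $A \npreceq_{M_1} B$ is \emph{absorbed}: whenever $A \subset Q \subset M$ with $Q$ with expectation and $Q$ amenable relative to $M_1$ inside $M$, one necessarily has $Q \subset M_1$. In our situation, assertion $(\rm ii)$ furnishes precisely such an $A$, since $A \subset Q \cap M_1$ is the same as $A \subset Q$ together with $A \subset M_1$, while assertion $(\rm i)$ is exactly the relative amenability hypothesis $Q \lessdot_M M_1$. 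The only point to address is that relative amenability is taken here in the sense of \cite{Is17}: by the remark following Theorem \ref{thm-characterization} the relation $Q \lessdot_M M_1$ does not depend on the choice of faithful normal conditional expectation $M \to Q$, and by Lemma \ref{lem-elementary-properties1} it is stable under the elementary cut-downs one might need to match the precise formulation of \cite[Main Theorem]{BH16}, so the two versions coincide. Hence the first and essentially only step is to invoke \cite[Main Theorem]{BH16}.

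For orientation, let me also recall the mechanism behind \cite[Main Theorem]{BH16}, as it explains why the hypothesis is needed in the form $(\rm ii)$. One works with the $s$-malleable deformation of the AFP over the amalgam, embedding $M$ into $\widetilde M = \widetilde M_1 \ast_{B \ovt \rL(\Z)} \widetilde M_2$ with $\widetilde M_i = M_i \ast_B (B \ovt \rL(\Z))$, together with the one-parameter automorphism group $(\theta_t)_{t \in \R}$ of $\widetilde M$ obtained by functional calculus from a generating Haar unitary of the adjoined copy of $\rL(\Z)$. This deformation fixes $B$, converges pointwise to $\id$, and enjoys a transversality inequality. The relative amenability $Q \lessdot_M M_1$ is fed, through the ucp maps provided by Theorem \ref{thm-characterization} and a Popa-type spectral gap argument, into an upgrade of the pointwise convergence $\theta_t \to \id$ to uniform convergence of $\theta_t$ on the unit ball of $Q$. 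Then the rigidity of the deformation along $M_1$, combined with $A \npreceq_{M_1} B$ and with the link between non-intertwining and relative amenability recorded in Proposition \ref{lem-intertwining-relative-amenability} (together with the transitivity of relative amenability, Corollary \ref{cor-transitivity}), forces $Q \subset M_1$.

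The genuine obstacle — and the reason one cannot replace this by a softer argument — lies in that last step in the type ${\rm III}$, non-tracial regime: as stressed in the introduction, when $A$ is a type ${\rm III}$ subalgebra of $M_1$ there is no analytic criterion in terms of nets of unitaries detecting $A \npreceq_{M_1} B$, so the classical clustering computation (extracting unitaries in $A$ going to $0$ relative to $B$) is unavailable. The whole point of \cite{BH16} is to phrase the argument purely in terms of relative amenability and non-normal conditional expectations (Theorem \ref{thm-characterization}, Proposition \ref{lem-intertwining-relative-amenability}) rather than intertwining nets. Granting \cite[Main Theorem]{BH16}, the proof of Theorem \ref{thm-maximal-amenability} therefore reduces to the bookkeeping carried out in the first paragraph.
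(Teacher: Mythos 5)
Your reduction is circular: the statement you attribute to \cite{BH16} --- absorption of $Q$ as soon as \emph{some} subalgebra $A \subset Q \cap M_1$ with expectation in $M_1$ satisfies $A \npreceq_{M_1} B$ --- is not the Main Theorem of \cite{BH16}; it is exactly the ``slightly more general version'' that Theorem \ref{thm-maximal-amenability} is asserting. The hypothesis of \cite[Main Theorem]{BH16} bears on the relative intersection itself: it requires that $Q \cap M_1 \subset M_1$ be with expectation and that $Q \cap M_1 \npreceq_{M_1} B$. Hypothesis $(\rm ii)$ here does not reduce to that by ``matching hypotheses'': in the $\sigma$-finite non-tracial setting there is no reason for $Q \cap M_1$ to be with expectation in $M_1$, so the condition $Q \cap M_1 \npreceq_{M_1} B$ cannot even be formulated, let alone deduced from $A \npreceq_{M_1} B$ (the upgrade from $A$ to $Q \cap M_1$ via \cite[Lemma 4.8]{HI15a} would itself require $Q \cap M_1 \subset M_1$ to be with expectation). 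This weakening is not cosmetic bookkeeping: in the applications inside Theorem \ref{thm-control} the result is invoked for $Q = Az \oplus Az^{\perp}$ and $Q = \langle A, u\rangle$, where one controls $A$ but has no information whatsoever about $Q \cap M_1$. Your aside about the dependence of $\lessdot_M$ on the choice of expectation addresses a non-issue; the real mismatch is in hypothesis $(\rm ii)$.

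The intended argument --- and the one the paper records in a single line --- is not a reduction to the \emph{statement} of \cite[Main Theorem]{BH16} but to its \emph{proof}: that proof uses $Q \cap M_1$ only through the existence of a von Neumann subalgebra of it that is with expectation in $M_1$ and does not embed into $B$, so it goes through verbatim with $A$ in place of $Q \cap M_1$. Your second and third paragraphs describe the underlying mechanism (malleable deformation, transversality, spectral gap from $Q \lessdot_M M_1$, rigidity along $M_1$) accurately enough, so the substance is within reach; the gap is the claim that the theorem is an immediate consequence of the cited statement rather than of a rerun of its proof with the weaker hypothesis.
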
 

\begin{proof}
The proof of \cite[Main Theorem]{BH16} is exactly the same after replacing $Q \cap M_1$ by $A$.
\end{proof}

We next prove a technical result, partially generalizing \cite[Theorem 1.1]{IPP05}, about controlling relative commutants and normalizers of von Neumann subalgebras with expectation inside arbitrary AFP von Neumann algebras.

\begin{thm}\label{thm-control}
Keep the same notation as above. Assume that $M_1 \npreceq_{M_1} B$. Let $p \in M_1$ be any nonzero projection and $A \subset pM_1p$ any von Neumann subalgebra with expectation. If $A \npreceq_{M_1} B$, then $\mathcal N_{pMp}(A)\dpr \subset pM_1p$.
\end{thm}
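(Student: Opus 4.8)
The plan is to prove Theorem \ref{thm-control} by combining the relative amenability machinery developed in Sections 3--4 with Theorem \ref{thm-maximal-amenability} (Boutonnet--Houdayer amenable absorption). The strategy mirrors the deformation/rigidity argument of Ioana--Peterson--Popa \cite{IPP05}, but since $A$ may be of type ${\rm III}$ there is no net-of-unitaries criterion available to run their original proof; instead we replace the ``spectral gap'' step by relative amenability via possibly non-normal conditional expectations, exactly as indicated in the introduction. Throughout, fix a faithful normal conditional expectation $\rE_A : pM_1p \to A$ and extend it with the conditional expectations $\rE_i$ and $\rE$ to get a coherent system of states.

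First I would show that $\mathcal N_{pMp}(A)\dpr \lessdot_M M_1$. The key mechanism is the free malleable deformation of $M = M_1 \ast_B M_2$ (the automorphisms $\alpha_t$ on $M \ast_B (B \ovt \rL(\R))$, or equivalently the $s$-malleable deformation of \cite{IPP05} adapted to the AFP setting, cf.\ \cite{BH16}): because $A \npreceq_{M_1} B$, the deformation converges uniformly on the unit ball of $A$ (this is precisely where $A \npreceq_{M_1} B$ enters; one uses the transversality/convergence dichotomy of \cite{BH16, Is17} showing that if the deformation did not converge uniformly on $\Ball(A)$ then $A \preceq_{M_1} B$). Uniform convergence on $\Ball(A)$ together with the fact that $A$ normalizes $P := \mathcal N_{pMp}(A)\dpr$ then yields, by the standard averaging/transversality argument, either $P \preceq_M M_1$ (in the weak, with-expectation sense that localizes onto a corner — call this case (a)), or $P \lessdot_M M_1$ via the non-normal expectation supplied by the limit of the ucp maps $\Phi_t = \rE_{M_1} \circ \alpha_t$ restricted appropriately (case (b)). In case (a), since a corner of $P$ already embeds into $M_1$ and $A \subset P$, one should be able to bootstrap using Lemma \ref{lem-elementary-properties1} and Theorem \ref{thm-maximal-amenability} itself, or simply observe that $P \preceq_M M_1$ with $A \npreceq_{M_1} B$ forces $P$ into $M_1$ after cutting by a central projection; so both cases reduce to showing $Pz \subset M_1$ for every central projection $z$ of $P$, and it suffices to treat case (b).

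In case (b) I would apply Theorem \ref{thm-maximal-amenability} with $Q = P = \mathcal N_{pMp}(A)\dpr$ (viewed inside $pMp$, or after amplifying/cutting, inside $M$; note $pMp$ is itself an AFP corner, or one replaces $M$ by $M$ and $Q$ by the algebra generated by $P$ and $p$ — a harmless adjustment since $p \in M_1$). Hypothesis (i) of Theorem \ref{thm-maximal-amenability}, namely $Q \lessdot_M M_1$, is exactly case (b). Hypothesis (ii) is satisfied by taking the subalgebra $A \subset Q \cap M_1 = Q \cap pM_1p$: it is with expectation in $M_1$ (here we use $p \in M_1$ and compose $\rE_A$ with the natural expectation $M_1 \to pM_1p$, or work inside $pM_1p$ throughout) and by assumption $A \npreceq_{M_1} B$. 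Theorem \ref{thm-maximal-amenability} then gives $Q \subset M_1$, i.e.\ $\mathcal N_{pMp}(A)\dpr \subset pM_1p$, which is the claim. Strictly speaking one must be slightly careful that ``$Q \lessdot_M M_1$'' obtained from the deformation is relative amenability of $Q$ as a subalgebra of $M$ (not just of $pMp$), and that the extra assumption $M_1 \npreceq_{M_1} B$ in the theorem statement is used — it guarantees we are not in a degenerate situation and is needed to apply \cite{BH16} on the nose; I would invoke it together with $A \npreceq_{M_1} B$ when setting up the deformation.

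The main obstacle, and the place where genuine work beyond citing earlier results is required, is the first step: proving the dichotomy ``either $\mathcal N_{pMp}(A)\dpr \preceq_M M_1$ or $\mathcal N_{pMp}(A)\dpr \lessdot_M M_1$'' from $A \npreceq_{M_1} B$. In the tracial IPP setting one uses an explicit asymptotic orthogonality / $\rL^2$-estimate and extracts an honest net of intertwiners; in the present generality one has only ucp maps and must produce a (possibly non-normal) conditional expectation onto $P$ from $\langle M, M_1\rangle$ by a weak-$\ast$ limit of the $\Phi_t$ composed with the spectral gap averaging over $\mathcal U(A)$, then invoke Theorem \ref{thm-characterization} to see this as relative amenability. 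Making the convergence-dichotomy precise — in particular ruling out that the deformation moves $\Ball(A)$ away uniformly unless $A \preceq_{M_1} B$, which is the AFP analogue of \cite[Lemma]{BH16} / the transversality inequality — is the technical heart and should occupy the bulk of the actual proof. Everything after that is bookkeeping with Lemmas \ref{lem-elementary-properties1}, \ref{lem-elementary-properties2} and an application of Theorem \ref{thm-maximal-amenability}.
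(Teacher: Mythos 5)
Your endgame (feeding $Q \lessdot_M M_1$ and the witness $A \subset Q \cap M_1$ with $A \npreceq_{M_1} B$ into Theorem \ref{thm-maximal-amenability}) is exactly what the paper does, but the route you propose to hypothesis $(\rm i)$ of that theorem contains a genuine gap, and it is precisely the gap the paper is engineered to avoid. You want to derive ``either $\mathcal N_{pMp}(A)\dpr \preceq_M M_1$ or $\mathcal N_{pMp}(A)\dpr \lessdot_M M_1$'' from a malleable-deformation/transversality dichotomy keyed to $A \npreceq_{M_1} B$, asserting that if the deformation fails to converge uniformly on $\Ball(A)$ then $A \preceq_{M_1} B$. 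In the non-tracial setting this implication is false: the condition $A \npreceq_{M_1} B$ for a type ${\rm III}$ subalgebra admits no analytic (net-of-unitaries or deformation-convergence) characterization, and the paper even exhibits inclusions with $N \npreceq_N Q$ but $\core(N) \preceq_{\core(N)} \core(Q)$ (Theorem \ref{rem-intertwining}). The $\rL^2$-estimates of the deformation only see the continuous core, so the most your dichotomy could yield is a statement about $\core(A)$ and $\core(B)$ --- which is why the deformation argument is confined to the semifinite Appendix (Theorem \ref{thm-appendix}) and plays no role in the proof of Theorem \ref{thm-control}. Your fallback in case (a), that $P \preceq_M M_1$ plus $A \npreceq_{M_1} B$ ``forces $P$ into $M_1$ after cutting by a central projection,'' is also unjustified: embedding of a corner is far from containment.

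The actual mechanism is much softer and does not require the ``technical heart'' you anticipate. Since $A \subset M_1$ is with expectation, $A \lessdot_M M_1$ holds for free; the work is then to propagate relative amenability, not to create it. One first reduces to $p=1$ (choosing a state making everything $\sigma^\psi$-invariant and replacing $A$ by $A \oplus p^\perp M_1 p^\perp$ --- this is the only place $M_1 \npreceq_{M_1} B$ is used, to keep the enlarged algebra from embedding into $B$). One then shows $A' \cap M \subset M_1$ by applying Theorem \ref{thm-maximal-amenability} to $P = Az \oplus Az^\perp$ for each projection $z \in A' \cap M$, using Lemma \ref{lem-elementary-properties2} $(\rm ii)$ to see $P \lessdot_M M_1$. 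After replacing $A$ by $A \vee (A' \cap M)$ so that $A' \cap M = \mathcal Z(A)$, one treats the normalizer one unitary at a time: for each $u \in \mathcal N_M(A)$, Lemma \ref{lem-elementary-properties2} $(\rm iii)$ (an averaging over $(\mathcal J u \mathcal J)^n$ on the basic construction, which needs $A' \cap M = \mathcal Z(A)$) gives $\langle A, u\rangle \lessdot_M M_1$, and Theorem \ref{thm-maximal-amenability} then puts $u$ in $M_1$. So the hypothesis $A \npreceq_{M_1} B$ is consumed only inside the amenable absorption theorem, never in a spectral gap estimate, and no deformation is needed anywhere in this proof.
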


\begin{proof}
First, we observe that we may assume that $p = 1$. Indeed, we may choose a faithful state $\psi \in M_\ast$ such that $p \in M^\psi$ and such that $M_1 \subset M$ and $A \subset pM_1p$ are both globally invariant under $\sigma^\psi$. Then $\mathcal A = A \oplus p^\perp M_1 p^\perp$ is globally invariant under $\sigma^\psi$ and thus $\mathcal A \subset M_1$ is with expectation. Since $A \npreceq_{M_1} B$ and $M_1 \npreceq_{M_1} B$, we have $\mathcal A \npreceq_{M_1} B$. We moreover have $\mathcal N_{pMp}(A) \dpr \oplus p^\perp M_1 p^\perp \subset \mathcal N_{M}(\mathcal A)\dpr$. Thus, without loss of generality, we may assume that $p = 1$.

Next, we prove that $A' \cap M \subset M_1$. Let $z \in A' \cap M$ be any projection and put $P = Az \oplus Az^\perp$. Since $A \subset M_1$ is with expectation, we have $A \lessdot_M M_1$. By Lemma \ref{lem-elementary-properties2} $(\rm ii)$, $P \subset M$ is with expectation and $P \lessdot_M M_1$. Since $A \subset P \cap M_1$ is with expectation in $M_1$ and since $A \npreceq_{M_1} B$, Theorem \ref{thm-maximal-amenability} implies that $P \subset M_1$ and so $z \in M_1$. Since this holds true for every projection $z \in A' \cap M$, we obtain $A' \cap M \subset M_1$.

Finally, we prove that $\mathcal N_M(A)\dpr \subset M_1$. We observe that we may assume that $A' \cap M = \mathcal Z(A)$. Indeed, put $\mathcal A = A \vee (A' \cap M)$. By the previous paragraph, we have $\mathcal A \subset M_{1}$ and $\mathcal A \subset M_{1}$ is with expectation. Moreover, we have $\mathcal A' \cap M = \mathcal Z(\mathcal A)$, $\mathcal A \npreceq_{M_1} B$ and $\mathcal N_M(A)\dpr \subset \mathcal N_M(\mathcal A)\dpr$. Thus, without loss of generality, we may assume that $A \subset M_1$ is with expectation, $A' \cap M = \mathcal Z(A)$ and $A \npreceq_{M_1} B$. Let $u \in \mathcal N_M(A)$ be any element and put $Q = \langle A, u\rangle$. Since $A \subset M_1$ is with expectation, we have $A \lessdot_M M_1$. By Lemma \ref{lem-elementary-properties2} $(\rm iii)$, $Q \subset M$ is with expectation and $Q \lessdot_M M_1$. Since $A \subset Q \cap M_1$ is with expectation in $M_1$ and since $A \npreceq_{M_1} B$, Theorem \ref{thm-maximal-amenability} implies that $Q \subset M_1$ and so $u \in M_1$. Since this holds true for every unitary $u \in \mathcal N_M(A)$, we obtain $\mathcal N_M(A)\dpr \subset M_1$.
\end{proof}

\subsection*{Proof of Theorem \ref{thmA}} Keep the same notation as above. We first prove the following technical result.

\begin{claim}\label{claim-technical}
If the inclusion $B \subset M_{2}$ is entirely nontrivial, then so is $\core_{\varphi}(B) \subset \core_{\varphi}(M_2)$.
\end{claim}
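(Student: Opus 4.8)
The plan is to argue by contrapositive. Suppose that $\core_\varphi(B) \subset \core_\varphi(M_2)$ is \emph{not} entirely nontrivial; I want to deduce that $B \subset M_2$ is not entirely nontrivial, i.e.\ that some nonzero direct summand of $B$ is a direct summand of $M_2$. By definition of ``entirely nontrivial'', the failure means there is a nonzero central projection $z \in \mathcal Z(\core_\varphi(B))$ which is also central in $\core_\varphi(M_2)$ and such that $\core_\varphi(B)z = \core_\varphi(M_2)z$. The key structural fact I would use is that $\core_\varphi(N) = N \rtimes_{\sigma^\varphi} \R$ and that $\mathcal Z(\core_\varphi(N))$ consists of the elements of $N \rtimes \R$ fixed by the dual trace-scaling $\R$-action; more concretely, recall from the continuous-core discussion in the preliminaries that the flow of weights sits inside $\mathcal Z(\core_\varphi(N))$, and $\mathcal Z(N) \rtimes \R$ is a canonical subalgebra.

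First I would reduce to a central projection coming from $B$ and $M_2$ themselves. The projection $z$ lies in $\mathcal Z(\core_\varphi(M_2))$, hence is $\sigma^\varphi$-invariant in the sense that it commutes with $\lambda_\varphi(\R)$ and lies in $(M_2 \rtimes \R)$. Using the conditional expectation $\rE_{\rL_\varphi(\R)}$ and the structure of the crossed product, I would show that the central support of $z$ in $\core_\varphi(M_2)$ — call it $\tilde z$ — can be taken of the form $\pi_\varphi(e)$ for a projection $e \in \mathcal Z(M_2)$ with $\sigma^\varphi_t(e) = e$, equivalently $e \in \mathcal Z(M_2) \cap M_2^\varphi$. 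Indeed, $\mathcal Z(\core_\varphi(M_2))$ is generated by $\pi_\varphi(\mathcal Z(M_2)^{\sigma^\varphi})$ together with $\lambda_\varphi(\R)''$, but the part of $z$ detecting a direct summand of $M_2$ (as opposed to a ``spectral'' piece of $\R$) is precisely its image in $\mathcal Z(M_2)^{\sigma^\varphi}$ under the natural map; since $z$ is also central in $\core_\varphi(B)$, the same projection $e$ lies in $\mathcal Z(B)^{\sigma^\varphi}$ and is central in $\core_\varphi(B)$, so $e \in \mathcal Z(B)$.

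Next, from $\core_\varphi(B)z = \core_\varphi(M_2)z$ I would descend to $Be = M_2e$. The equality of cores, restricted down to the fixed-point algebras under the dual $\R$-action — equivalently, intersecting with $\pi_\varphi(M_2)$ and $\pi_\varphi(B)$ via $\rE_{\pi_\varphi(M_2)}$, the expectation dual to $\lambda_\varphi$ — yields $\pi_\varphi(B)\pi_\varphi(e) = \pi_\varphi(M_2)\pi_\varphi(e)$, and since $\pi_\varphi$ is a normal isomorphism onto its image, $Be = M_2e$. Thus the nonzero direct summand $Be$ of $B$ is a direct summand of $M_2$, contradicting the hypothesis that $B \subset M_2$ is entirely nontrivial. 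This proves the claim.

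The main obstacle I anticipate is the bookkeeping in the second step: precisely identifying, inside $\mathcal Z(\core_\varphi(M_2)) = (M_2 \rtimes \R)' \cap (M_2 \rtimes \R)$, which central projections ``come from'' $\mathcal Z(M_2)$ versus which come from the $\lambda_\varphi(\R)$ direction, and checking that a projection witnessing the failure of entire nontriviality for the cores can always be replaced by one of the former type. The clean way to handle this is to note that if $z \in \mathcal Z(\core_\varphi(B)) \cap \mathcal Z(\core_\varphi(M_2))$ with $\core_\varphi(B)z = \core_\varphi(M_2)z$, then in particular $z$ is central in $\core_\varphi(M_2)$ and $z$ commutes with $\lambda_\varphi(\R)$; applying the trace $\Tr_\varphi$-preserving expectation onto $\pi_\varphi(M_2)$ is not available (it is not an expectation onto a subalgebra containing $z$), so instead I would use that $z \le$ its central support $\tilde z = \pi_\varphi(e)$ with $e \in \mathcal Z(M_2)^{\sigma^\varphi} = \mathcal Z(M_2) \cap M_2^\varphi$, observe $\core_\varphi(M_2)\tilde z = \core_{\varphi}(M_2 e) = \core_\varphi(B e)\,$ (the last equality from $\core_\varphi(B)z = \core_\varphi(M_2)z$ together with $z$ being full in $\core_\varphi(M_2)\tilde z$), and finally intersect with $\pi_\varphi(M_2 e)$ to get $Be = M_2 e$.
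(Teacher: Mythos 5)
Your overall architecture (contrapositive, reduce the offending central projection to one of the form $\pi_\varphi(e)$ with $e$ a $\sigma^\varphi$-invariant central projection of $M_2$ and of $B$, then descend via the fixed-point algebra of the dual action) is the right one and matches the paper. But the reduction step, which you yourself flag as the main obstacle, has a genuine gap as written. First, the object you call ``the central support of $z$ in $\core_\varphi(M_2)$'' is vacuous: $z$ is by hypothesis already central in $\core_\varphi(M_2)$, so its central support is $z$ itself, and there is no reason for $z$ to lie in $\pi_\varphi(\mathcal Z(M_2))$. Second, the structural claim you lean on --- that $\mathcal Z(\core_\varphi(M_2))$ is generated by $\pi_\varphi(\mathcal Z(M_2)^{\sigma^\varphi})$ together with $\lambda_\varphi(\R)''$ --- is false in general: this center is the algebra of the flow of weights of $M_2$, which need not be generated by those two pieces. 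Third, and most seriously, even if you produce a larger projection $\tilde z = \pi_\varphi(e) \geq z$ of the desired form, the equality $\core_\varphi(B)\tilde z = \core_\varphi(M_2)\tilde z$ does \emph{not} follow from $\core_\varphi(B)z = \core_\varphi(M_2)z$ by any fullness consideration: if $z$ were full in $\core_\varphi(M_2)\tilde z$ in the only sense available for a central projection, that would force $\tilde z = z$, and in general a strictly larger central projection can perfectly well fail the corner identity (take $\tilde z = 1$ to see the absurdity).

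The missing ingredient is the dual trace-scaling action $\theta^\varphi : \R \curvearrowright \core_\varphi(M_2)$. Since $\theta^\varphi_t$ globally preserves both $\core_\varphi(M_2)$ and $\core_\varphi(B)$, applying it to the hypothesis gives $\core_\varphi(B)\,\theta^\varphi_t(z) = \core_\varphi(M_2)\,\theta^\varphi_t(z)$ for every $t$, and hence the same identity for $s = \bigvee_{t \in \R}\theta^\varphi_t(z)$. Now $s$ is $\theta^\varphi$-invariant and central in both algebras, so Takesaki duality (\cite[Theorem X.2.3 (i)]{Ta03}: $\core_\varphi(N)^{\theta^\varphi} = \pi_\varphi(N)$) yields $s = \pi_\varphi(r)$ with $r \in \mathcal Z(B)\cap\mathcal Z(M_2)$ and $\sigma^\varphi_t(r)=r$, and a second application of the same duality to $\core_\varphi(Br) = \core_\varphi(M_2 r)$ gives $Br = M_2 r$. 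This is exactly how the paper closes the argument; your final descent step (intersecting with the fixed-point algebra of the dual action) is correct, but without the $\theta^\varphi$-saturation of $z$ the projection you feed into it is not available.
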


\begin{proof}[Proof of Claim \ref{claim-technical}]
We prove the contrapositive. Assume that there exists a nonzero projection $z \in \mathcal Z(\core_{\varphi}(B) ) \cap \mathcal Z(\core_{\varphi}(M_2))$ such that $\core_{\varphi}(B)  z = \core_{\varphi}(M_2) z$. Denote by $\theta^\varphi : \R \curvearrowright \core_{\varphi}(M_2)$ the dual trace scaling action of $\sigma^\varphi : \R \curvearrowright M_{2}$ where we identify $\widehat \R = \R$ (see \cite[Theorem X.2.3]{Ta03}). Then for every $t \in \R$, we have $\core_{\varphi}(B)  \theta_{t}^{\varphi}(z) = \core_{\varphi}(M_2) \theta_{t}^{\varphi}(z)$ and so $\core_{\varphi}(B)  s = \core_{\varphi}(M_2) s$ where $s = \bigvee_{t \in \R} \theta_{t}^{\varphi}(z)$. Observe that $s \in \mathcal Z(\core_{\varphi}(B) ) \cap \mathcal Z(\core_{\varphi}(M_2))$ and $\theta_t^\varphi(s) = s$ for every $t \in \R$. By \cite[Theorem X.2.3 (i)]{Ta03}, there exists a nonzero projection $r \in \mathcal Z(B) \cap \mathcal Z(M_{2})$ so that $\pi_\varphi(r) = s$. We then have $\sigma_t^\varphi(r)=r$ for every $t\in \R$ and $\core_{\varphi}(Br) = \core_{\varphi}(M_{2} r)$. By \cite[Theorem X.2.3 (i)]{Ta03}, we have
	$$ \pi_\varphi(Br) = \core_{\varphi}(Br)^{\theta^\varphi} = \core_\varphi(M_2r)^{\theta^\varphi}=\pi_\varphi(M_2r)$$
and so $B r =  M_{2} r$. 
\end{proof}

\begin{proof}[Proof of Theorem \ref{thmA}]
$(\rm i)$ Since $M_1 \npreceq_{M_1} B$, Theorem \ref{thm-control} implies that $\mathcal Z(M) \subset M_1' \cap M \subset M_1$. Since $B \subset M_2$ is entirely nontrivial, the same argument as in \cite[Theorem 4.3 (1)]{Ue12} shows that $\mathcal Z(M) \subset B$. Thus, we have $\mathcal Z(M) = M' \cap B = \mathcal Z(M_1) \cap \mathcal Z(M_2) \cap \mathcal Z(B)$.

$(\rm ii)$ It is obvious that $\left\{ t \in \R : \exists u \in \mathcal U(B) \text{ such that } \sigma_t^\varphi = \Ad(u) \right \} \subset \rT(M)$. Conversely, let $t \in \rT(M)$. Then there exists $u \in \mathcal U(M)$ such that $\sigma_t^\varphi = \Ad(u)$. Since $M_1 \npreceq_{M_1} B$ and since $u M_1 u^* = \sigma_t^\varphi(M_1) = M_1$, we obtain $u \in \mathcal U(M_1)$ by Theorem \ref{thm-control}. Since $B \subset M_2$ is entirely nontrivial, the same argument as in \cite[Theorem 4.3 (2)]{Ue12} shows that $u \in \mathcal U(B)$. 

$(\rm iii)$ The proof of \cite[Theorem 4.3 (3)]{Ue12} applies {\em mutatis mutandis}.

$(\rm iv)$ By contradiction, assume that there exists a nonzero projection $z \in \mathcal Z(M)$ such that $Mz \lessdot_M M_1$. By item $(\rm i)$, we have $z \in M' \cap B$. Put $Q = Mz \oplus M_1 z^\perp$. Then $M_1 \subset Q \subset M$ is with expectation and $Q \lessdot_M M_1$. Then Theorem \ref{thm-maximal-amenability} implies that $Q \subset M_1$ and so $Mz = M_1 z$. This further implies that $M_2z = Bz$. This however contradicts the assumption that the inclusion $B \subset M_2$ is entirely nontrivial.

$(\rm v)$ Choose a faithful state $\varphi \in M_\ast$ such that $\varphi \circ \rE = \varphi$. Put $\core(B) = \core_\varphi(B)$, $\core(M) = \core_\varphi(M)$ and $\core(M_i) = \core_\varphi(M_i)$ for every $i \in \{1, 2\}$. We canonically have $\core(M) = \core(M_1) \ast_{\core(B)} \core(M_2)$. Using the assumption and Claim \ref{claim-technical}, the exact same reasoning as in item $(\rm i)$ shows that $\mathcal Z(\core(M)) \subset \core(B)$. Thus, we have $\mathcal Z(\core(M)) = \core(M)' \cap \core(B) = \mathcal Z(\core(M_1)) \cap \mathcal Z(\core(M_2)) \cap \mathcal Z(\core(B))$.
\end{proof}

\subsection*{Proof of Theorem \ref{thmB}}

Keep the same notation as before. Let $J$ be any nonempty directed set and ${\omega}$ any cofinal ultrafilter on $J$. Fix a faithful state $\varphi \in M_\ast$ such that $\varphi \circ \rE = \varphi$. For every $i \in \{1, 2\}$, put $Q_i = M_i \vee B^{\omega}$ and $Q = Q_1 \vee Q_2 = M \vee B^\omega$. Observe that $M_{\omega}, B^{\omega}, M_1^{\omega}, M_2^{\omega}, Q_1, Q_2, Q$ are all globally invariant under $\sigma^{\varphi^{\omega}}$ and hence they are all with expectation in $M^{\omega}$. By \cite[Theorem 1.10]{MT13}, we have the following canonical inclusions with trace preserving conditional expectations:
\begin{equation}\label{eq:inclusions}
\core_{\varphi}(M) \subset \core_{\varphi^{\omega}}(M^{\omega}) \subset \core_{\varphi}(M)^{\omega}.
\end{equation}
Put $\mathcal M = \core_\varphi(M)$, $\mathcal B = \core_\varphi(B)$ and $\mathcal M_i = \core_{\varphi}(M_i)$ for every $i \in \{1, 2\}$. We identify $\rL_{\varphi}(\R) = \rL_{\varphi^\omega}(\R)$. Moreover, put $\mathcal Q_i = \mathcal M_i \vee \mathcal B^{\omega}$ for every $i \in \{1, 2\}$ and $\mathcal Q = \mathcal Q_1 \vee \mathcal Q_2 = \mathcal M \vee \mathcal B^\omega$.

\begin{proof}[Proof of Theorem \ref{thmB}]
Fix a faithful normal conditional expectation $\rE_{P} : M \to P$. Choose a faithful state $\psi \in M_{\ast}$ such that $\psi \circ \rE_{P} = \psi$. Write $\core(P) = \Pi_{\varphi, \psi}(\core_{\psi}(P)) \subset \mathcal M$. We have the following trichotomy.

{\bf First case:} Assume that there exist $i \in \{1, 2\}$ and a nonzero finite trace projection $p \in \core(P)$ such that $p\core(P)p \preceq_{\mathcal M} \mathcal M_{i}$. Then we clearly have $\core(P) \preceq_{\mathcal M} \mathcal M_{i}$. This is precisely item $(\rm ii)$ in the statement of Theorem \ref{thmB}.

{\bf Second case:} Assume that there exist a nonzero finite trace projection $p \in \core(P)$ and a nonzero projection $z \in (p\core(P)p)' \cap p\mathcal Mp$ such that $p\core(P)p z \lessdot_{\mathcal M} \mathcal B$. By \cite[Lemma 2.1]{Po81}, we have $(p\core(P)p)' \cap p\mathcal Mp = (\core(P)' \cap \mathcal M)p$. We may choose a projection $q \in \core(P)' \cap \mathcal M$ such that $z = qp$. Then we have $z \in \core(P)q$ and $z \, \core(P)q \, z \lessdot_{\mathcal M} \mathcal B$. Denote by $z_{\core(P)q}(z)$ the central support of the projection $z$ in $\core(P)q$. Observe that $\mathcal Z(\core(P)q) = \mathcal Z(\core(P)) q$ and so $z_{\core(P)q}(z) \in \mathcal Z(\core(P)) q \subset \core(P)' \cap \mathcal M$. Then Lemma \ref{lem-elementary-properties1} $(\rm iv)$ implies that $\core(P)q \, z_{\core(P)q}(z) \lessdot_{\mathcal M} \mathcal B$. Up to replacing $q \in \core(P)' \cap \mathcal M$ by $q z_{\core(P)q}(z) \in \core(P)' \cap \mathcal M$, we may assume that $\core(P)q  \lessdot_{\mathcal M} \mathcal B$. Finally, another application of  Lemma \ref{lem-elementary-properties1} $(\rm iv)$ yields item $(\rm iii)$ in the statement of Theorem \ref{thmB}.

{\bf Third case:} Assume that the {\bf first case} and the {\bf second case} do not occur. For every nonzero finite trace projection $p \in \core(P)$, applying Theorem \ref{thm-appendix} to $\mathcal P = p\core(P)p$, we obtain $(p \core(P) p)' \cap p \mathcal M^{\omega}p = \mathcal P' \cap p \mathcal M^{\omega}p \subset p \mathcal Q p$. By \cite[Lemma 2.1]{Po81}, we have $(p\core(P)p)' \cap p\mathcal M^{\omega}p = (\core(P)' \cap \mathcal M^{\omega})p$ and so $(\core(P)' \cap \mathcal M^{\omega})p \subset p \mathcal Q p$. Since this holds true for every nonzero finite trace projection $p \in \core(P)$, we infer that $\core(P)' \cap \mathcal M^{\omega} \subset \mathcal Q$ and so $\core(P)_{\omega} = \core(P)' \cap \core(P)^{\omega}\subset \core(P)' \cap \mathcal M^{\omega} \subset \mathcal Q$.

Using \cite[Theorem 1.10]{MT13}, for every $(x_{n})^{\omega} \in P_{\omega}$, we have
\begin{equation*}
\pi_{\psi^{\omega}}((x_{n})^{\omega}) = (\pi_\psi(x_{n}))^{\omega} \in \core_{\psi}(P)' \cap \core_{\psi}(P)^\omega = \core_{\psi}(P)_\omega
\end{equation*}
and
\begin{align*}
\pi_{\varphi^{\omega}}((x_{n})^{\omega}) &= \Pi_{\varphi^{\omega}, \psi^{\omega}}(\pi_{\psi^{\omega}}((x_{n})^{\omega})) \\ \nonumber
&= \left( \Pi_{\varphi, \psi} \right)^{\omega} \left((\pi_\psi(x_{n}))^{\omega} \right) \\
& \in  \left( \Pi_{\varphi, \psi} \right)^{\omega} \left( \core_{\psi}(P)_\omega \right) \\
&= \core(P)_\omega.
\end{align*}
Therefore, we have $\pi_{\varphi^{\omega}}(P_{\omega}) \subset \core(P)_\omega$ and so $\pi_{\varphi^{\omega}}(P_{\omega}) \subset \core(P)_\omega \cap \core_{\varphi^{\omega}}(M^{\omega})$.
Denote by $\mathcal E : \mathcal M^\omega \to \core_{\varphi^\omega}(M^\omega)$ the canonical trace preserving conditional expectation arising from \eqref{eq:inclusions}. Then we canonically have $\mathcal E(\mathcal Q) = \core_{\varphi^\omega}(Q)$ and we infer that
\begin{align*}
\pi_{\varphi^\omega}(P_\omega) &= \mathcal E(\pi_{\varphi^\omega}(P_\omega)) \quad (\text{since } \pi_{\varphi^\omega}(P_\omega) \subset \core_{\varphi^\omega}(M^\omega)) \\
& \subset \mathcal E (\core(P)_{\omega}) \quad \quad (\text{since } \pi_{\varphi^\omega}(P_\omega) \subset \core(P)_\omega) \\
& \subset \mathcal E(\mathcal Q) \quad  \quad \quad \; \; \, (\text{since }  \core(P)_\omega \subset \mathcal Q) \\
&= \core_{\varphi^\omega}(Q).
\end{align*}
Denote by $\theta^{\varphi^{\omega}} : \R \curvearrowright \core_{\varphi}(M^{\omega})$ the dual trace-scaling action of $\sigma^{\varphi^{\omega}} : \R \curvearrowright M^{\omega}$ where we identify $\widehat \R = \R$ (see \cite[Theorem X.2.3]{Ta03}). By \cite[Theorem X.2.3 (i)]{Ta03}, we have
	$$ \pi_{\varphi^{\omega}}(P_{\omega}) = \pi_{\varphi^\omega}(P_\omega)^{\theta^{\varphi^{\omega}}}\subset \core_{\varphi^\omega}(Q)^{\theta^{\varphi^\omega}} = \pi_{\varphi^\omega}(Q)$$
and so $P_{\omega} \subset Q$.
\end{proof}

Observe that the proof of Theorem \ref{thmB} shows that Assertion $(\rm ii)$ can be strengthened as follows: there exist $i \in \{1, 2\}$ and a nonzero finite trace projection $p \in \core(P)$ such that $p \core(P) p \preceq_{\mathcal M} \mathcal M_{i}$. We next prove a more precise version of Theorem \ref{thmB} when $P = M$.

\begin{thm}\label{thmB-precise}
Keep the same notation as above. Assume that $M_{1} \npreceq_{M_{1}} B$ and the inclusion $B \subset M_{2}$ is entirely nontrivial. Then at least one of the following assertions holds true:

\begin{itemize}
\item [$(\rm i)$] For every nonempty directed set $J$ and every cofinal ultrafilter ${\omega}$ on $J$, we have
$$M_{\omega} \subset M \vee B^{\omega} \quad \quad \text{and} \quad \quad \core(M)_\omega \subset \core(M) \vee \core(B)^\omega.$$

\item [$(\rm ii)$] There exist $i \in \{1, 2\}$ and a nonzero finite trace projection $p \in \mathcal M$ such that $p\mathcal Mp \preceq_{\mathcal M} \mathcal M_{i}$.
\end{itemize}

\end{thm}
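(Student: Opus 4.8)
The plan is to apply Theorem~\ref{thmB} with $P = M$ and to exclude its third alternative; once that is done, the first two alternatives of Theorem~\ref{thmB} specialize to assertions (i) and (ii) of the statement.

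First I would specialize the trichotomy of Theorem~\ref{thmB} to $P = M$. Here $\core(M)' \cap \core(M) = \mathcal{Z}(\mathcal M)$, so: alternative (i) reads $M_{\omega} \subset M \vee B^{\omega}$ and $\core(M)_{\omega} \subset \core(M) \vee \core(B)^{\omega}$ for all $J$ and $\omega$, which is assertion (i); alternative (ii), in the strengthened form recorded just after the proof of Theorem~\ref{thmB}, reads that there are $i \in \{1,2\}$ and a nonzero finite trace projection $p \in \mathcal M$ with $p \mathcal M p \preceq_{\mathcal M} \mathcal M_{i}$, which is assertion (ii); and alternative (iii) reads that there is a nonzero projection $z \in \mathcal{Z}(\mathcal M)$ such that $\mathcal M z \lessdot_{\mathcal M} \mathcal B$. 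It thus remains to rule out this last case.

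The heart of the matter is to descend a relative amenability statement from the semifinite core $\mathcal M = \core_{\varphi}(M)$ back down to $M$, where Theorem~\ref{thmA}(iv) applies. Assuming $\mathcal M z \lessdot_{\mathcal M} \mathcal B$ with $z \in \mathcal{Z}(\mathcal M)$ nonzero, I would introduce the dual trace-scaling action $\theta^{\varphi} \colon \R \curvearrowright \mathcal M$ of $\sigma^{\varphi}$ (\cite[Theorem X.2.3]{Ta03}). Since $\varphi \circ \rE = \varphi$, this action globally preserves $\mathcal B$, so conjugating by $\theta_{t}^{\varphi}$ gives $\mathcal M \, \theta_{t}^{\varphi}(z) \lessdot_{\mathcal M} \mathcal B$ for every $t \in \R$; by Lemma~\ref{lem-elementary-properties1}(v) the projection $s = \bigvee_{t \in \R} \theta_{t}^{\varphi}(z) \in \mathcal{Z}(\mathcal M)$ still satisfies $\mathcal M s \lessdot_{\mathcal M} \mathcal B$. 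Now $s$ is fixed by $\theta^{\varphi}$, hence (by \cite[Theorem X.2.3 (i)]{Ta03}, exactly as in the proof of Claim~\ref{claim-technical}) $s = \pi_{\varphi}(z_{0})$ for a nonzero projection $z_{0} \in \mathcal{Z}(M)$. Since $z_{0}$ is $\sigma^{\varphi}$-fixed, one canonically has $\mathcal M s = \core_{\varphi}(M z_{0})$ inside $\core_{\varphi}(M)$, and applying Theorem~\ref{thm-characterization} (implication $(\rm ii) \Rightarrow (\rm i)$) to the inclusions $M z_{0} \subset z_{0} M z_{0}$ and $B \subset M$, with $\varphi_{A} = \varphi_{B} = \varphi$, converts $\mathcal M s \lessdot_{\mathcal M} \mathcal B$ into $M z_{0} \lessdot_{M} B$. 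As $B \subset M_{1}$ is with expectation we have $B \lessdot_{M} M_{1}$, so Corollary~\ref{cor-transitivity} gives $M z_{0} \lessdot_{M} M_{1}$; this contradicts Theorem~\ref{thmA}(iv). Therefore alternative (iii) cannot occur, so one of assertions (i), (ii) holds, which completes the argument.

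I expect the only genuine subtlety to be this descent step. A general central projection $z$ of $\mathcal M$ need not come from $M$, and $\mathcal M z$ need not be the continuous core of any subalgebra of $M$, so Theorem~\ref{thm-characterization} does not apply to $z$ directly; the averaging over the dual flow $\theta^{\varphi}$ is precisely what replaces $z$ by a projection $s = \pi_{\varphi}(z_{0})$ to which it does apply. One must return to the level of $M$ rather than work inside $\mathcal M$ throughout, because $M_{1} \npreceq_{M_{1}} B$ does not entail $\mathcal M_{1} \npreceq_{\mathcal M_{1}} \mathcal B$ (Theorem~\ref{rem-intertwining}), so no core-level analogue of Theorem~\ref{thmA}(iv) is available.
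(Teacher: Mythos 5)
Your proposal is correct and follows essentially the same route as the paper: specialize Theorem~\ref{thmB} to $P=M$, rule out the relative amenability alternative by averaging $z$ over the dual trace-scaling action, descend to a $\sigma^\varphi$-fixed central projection of $M$ via Theorem~\ref{thm-characterization}, and contradict Theorem~\ref{thmA}(iv). The only (immaterial) difference is that the paper passes from $\mathcal B$ to $\mathcal M_1$ at the core level before averaging, whereas you keep $\mathcal B$ throughout the descent and invoke $B\lessdot_M M_1$ and Corollary~\ref{cor-transitivity} only at the level of $M$.
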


\begin{proof}
We show that Assertion $(\rm iii)$ in Theorem \ref{thmB} cannot hold. By contradiction, if Assertion $(\rm iii)$ in Theorem \ref{thmB} does hold, then there exists a nonzero projection $z \in \mathcal Z(\mathcal M)$ such that $\mathcal Mz \lessdot_{\mathcal M} \mathcal B$ and so $\mathcal Mz \lessdot_{\mathcal M} \mathcal M_{1}$. Then there exists a conditional expectation $\Phi : z \langle \mathcal M, \mathcal M_{1} \rangle z \to \mathcal M z$. 

Denote by $\theta^\varphi : \R \curvearrowright \mathcal M$ the dual trace scaling action of $\sigma^\varphi : \R \curvearrowright M$ where we identify $\widehat \R = \R$ (see \cite[Theorem X.2.3]{Ta03}). By construction of the AFP von Neumann algebra $\mathcal M$, we have $\theta_t^\varphi(\mathcal M_{1}) = \mathcal M_{1}$ and $\theta_t^\varphi \circ \rE_{\mathcal M_{1}} \circ \theta_{-t}^\varphi = \rE_{\mathcal M_{1}}$ for every $t \in \R$. For every $t \in \R$, denote by $V_t \in \mathcal U(\rL^2(\mathcal M))$ the canonical unitary implementing $\theta_t^\varphi \in \Aut(\mathcal M)$. Then we have $\Ad(V_t)(\mathcal M) = \mathcal M$ and $\Ad(V_t)(e_{\mathcal M_{1}}) = e_{\mathcal M_{1}}$ and so $\Ad(V_t)(\langle \mathcal M, \mathcal M_{1}\rangle) = \langle \mathcal M, \mathcal M_{1}\rangle$. We still denote by $\theta_t^\varphi = \Ad(V_t)|_{\langle \mathcal M, \mathcal M_{1}\rangle} \in \Aut(\langle \mathcal M, \mathcal M_{1}\rangle)$ the $\ast$-automorphism that canonically extends $\theta_t^\varphi \in \Aut(\mathcal M)$. Then the conditional expectation $\Phi_t = \theta_t^\varphi \circ \Phi \circ \theta_{-t}^\varphi : \theta_t^\varphi(z) \langle \mathcal M, \mathcal M_{1} \rangle \theta_t^\varphi(z) \to \mathcal M \theta_t^\varphi(z)$ witnesses the fact that $\mathcal M \theta_t^\varphi(z) \lessdot_{\mathcal M} \mathcal M_{1}$. 

Denote by $s = \bigvee_{t \in \R} \theta_t^\varphi(z)$. Then we have $\mathcal Ms \lessdot_{\mathcal M} \mathcal M_{1}$ by Lemma \ref{lem-elementary-properties1} $(\rm v)$. Observe that $s \in \mathcal Z(\mathcal M)$ and $\theta_t^\varphi(s) = s$ for every $t \in \R$. By \cite[Theorem X.2.3 (i)]{Ta03}, there exists a nonzero projection $r \in \mathcal Z(M)$ so that $\pi_\varphi(r) = s$. Observe also that $\sigma_{t}^{\varphi}(r) = r$ for every $t \in \R$ and  $r \in \mathcal Z(M) = M' \cap B$ thanks to Theorem \ref{thmA} $(\rm i)$. Then we have $\core_\varphi(Mr) \lessdot_{{\core_\varphi}(M)} \core_\varphi(M_{1})$ and so $\core_\varphi(Mr) \lessdot_{{\core_\varphi}(Mr)} \core_\varphi(M_{1}r)$ since $\langle {\core_\varphi}(Mr), \core_\varphi(M_{1}r)\rangle = \pi_\varphi(r)\langle {\core_\varphi}(M), \core_\varphi(M_{1})\rangle \pi_\varphi(r)$. We canonically have $(Mr, \rE|_{Mr}) = (M_1r, \rE_1|_{M_1r}) \ast_{Br} (M_2r, \rE_2|_{M_2r})$. Since $\core_\varphi(Mr) \lessdot_{{\core_\varphi}(Mr)} \core_\varphi(M_{1}r)$, we have $Mr \lessdot_{Mr} M_{1}r$ by Theorem \ref{thm-characterization} $(\rm i)$ and so $Mr \lessdot_{M} M_{1}$ since $\langle Mr, M_{1}r\rangle = r\langle M,M_{1}\rangle r$. This however contradicts Theorem \ref{thmA} $(\rm iv)$. 
\end{proof}

\begin{cor}\label{cor-thmB}
Keep the same notation as above. Assume that $M_{1} \npreceq_{M_{1}} B$ and the inclusion $B \subset M_{2}$ is entirely nontrivial. Assume moreover that $\core(M_{1}) \npreceq_{\core(M_{1})} \core(B)$, or $M_{2} \npreceq_{M_{2}} B$ or $M_{2} \lessdot_{M_{2}} B$.

Then for every nonempty directed set $J$ and every cofinal ultrafilter ${\omega}$ on $J$, we have
$$M_{\omega} \subset M \vee B^{\omega} \quad \quad \text{and} \quad \quad \core(M)_\omega \subset \core(M) \vee \core(B)^\omega.$$
\end{cor}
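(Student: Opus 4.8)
The plan is to deduce Corollary \ref{cor-thmB} from Theorem \ref{thmB-precise} by showing that, under any one of the three extra hypotheses, Assertion $(\rm ii)$ of Theorem \ref{thmB-precise} cannot hold; Assertion $(\rm i)$ is then exactly the asserted conclusion. So suppose $(\rm ii)$ holds: there are $i \in \{1,2\}$ and a nonzero finite trace projection $p \in \mathcal M := \core_\varphi(M)$ with $p\mathcal M p \preceq_{\mathcal M} \mathcal M_i$, where $\mathcal M_i := \core_\varphi(M_i)$. Since a corner of $p\mathcal M p$ is a corner of $\mathcal M$, this gives $\core(M) \preceq_{\core(M)} \core(M_i)$. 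The common first step, which I use for $i = 1$ and for $i = 2$ under the hypotheses $M_2 \npreceq_{M_2} B$ or $M_2 \lessdot_{M_2} B$, is to promote this to a relative amenability statement over the base and then descend to $M$: by Proposition \ref{lem-intertwining-relative-amenability} (applied with $A = \core(M)$) there is a nonzero projection $r \in \mathcal Z(\core(M))$ with $\core(M) r \lessdot_{\core(M)} \core(M_i)$; the dual trace-scaling action $\theta^\varphi : \R \curvearrowright \core(M)$ globally preserves $\core(M_i)$, so conjugating by $\theta^\varphi_t$ and invoking Lemma \ref{lem-elementary-properties1} $(\rm v)$ shows that the $\theta^\varphi$-invariant projection $s := \bigvee_{t \in \R} \theta^\varphi_t(r) \in \mathcal Z(\core(M))$ still satisfies $\core(M) s \lessdot_{\core(M)} \core(M_i)$; by \cite[Theorem X.2.3 (i)]{Ta03} we have $s = \pi_\varphi(z)$ for a nonzero projection $z \in \mathcal Z(M)$ with $\sigma^\varphi_t(z) = z$; and restricting to the central corner $\pi_\varphi(z)$ and applying Theorem \ref{thm-characterization} yields $Mz \lessdot_{Mz} M_i z$, hence $Mz \lessdot_M M_i$, for this nonzero $z \in \mathcal Z(M)$.

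For $i = 1$ the relation $Mz \lessdot_M M_1$ contradicts Theorem \ref{thmA} $(\rm iv)$, and no extra hypothesis is needed. For $i = 2$ under $M_2 \npreceq_{M_2} B$: the standing hypothesis $M_1 \npreceq_{M_1} B$ forces the inclusion $B \subset M_1$ to be entirely nontrivial (otherwise a direct summand of $B$ is a direct summand of $M_1$, which gives $M_1 \preceq_{M_1} B$), so Theorem \ref{thmA} applies with the indices $1$ and $2$ interchanged and its assertion $(\rm iv)$ now says that $Mz$ is not amenable relative to $M_2$ inside $M$ — contradicting $Mz \lessdot_M M_2$. For $i = 2$ under $M_2 \lessdot_{M_2} B$: from $M_2 \lessdot_{M_2} B$ one readily gets $M_2 \lessdot_M B$ (using that $\langle M_2, B \rangle$ is naturally a corner of $\langle M, B \rangle$ together with Theorem \ref{thm-characterization} $(\rm iii)$), hence $M_2 \lessdot_M M_1$ since $B \subset M_1$ is with expectation; combining with $Mz \lessdot_M M_2$ and transitivity of relative amenability (Corollary \ref{cor-transitivity}) gives $Mz \lessdot_M M_1$, again contradicting Theorem \ref{thmA} $(\rm iv)$.

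The remaining case, $i = 2$ under $\core(M_1) \npreceq_{\core(M_1)} \core(B)$, I expect to be the main obstacle, and here I would argue entirely inside the semifinite AFP decomposition $\core(M) = \core(M_1) \ast_{\core(B)} \core(M_2)$ rather than descending to $M$. By Claim \ref{claim-technical} the inclusion $\core(B) \subset \core(M_2)$ is entirely nontrivial, so, together with $\core(M_1) \npreceq_{\core(M_1)} \core(B)$, this AFP satisfies the hypotheses of Theorems \ref{thmA} and \ref{thm-control}; in particular $\core(M_1)' \cap \core(M) = \mathcal Z(\core(M_1))$. Since $\core(M)$ is a tracial von Neumann algebra and $\core(M_1) \subset \core(M)$ is with expectation, monotonicity of $\preceq$ in the first variable \cite{HI15a} upgrades $\core(M) \preceq_{\core(M)} \core(M_2)$ to $\core(M_1) \preceq_{\core(M)} \core(M_2)$; the intertwining theory controlling the position of a subalgebra of one free component that embeds into the other — the same mechanism behind Theorem \ref{thm-control} and \cite[Theorem 1.1]{IPP05} — then forces $\core(M_1) \preceq_{\core(M_1)} \core(B)$, contradicting the hypothesis. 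Thus in every case Assertion $(\rm ii)$ of Theorem \ref{thmB-precise} is impossible and Assertion $(\rm i)$ holds. The points that need care are the $\theta^\varphi$-saturation and the descent through Theorem \ref{thm-characterization} in the first paragraph, and, above all, the intertwining-across-the-amalgam step in the last paragraph, for which one should verify that the relevant case of the Ioana--Peterson--Popa control theorem is available in the present generality for the semifinite AFP $\core(M)$.
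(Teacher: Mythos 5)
Your overall strategy is the paper's: rule out Assertion $(\rm ii)$ of Theorem \ref{thmB-precise} under each of the three extra hypotheses. The case $i=1$ and the two cases $i=2$ with $M_2 \npreceq_{M_2} B$ or $M_2 \lessdot_{M_2} B$ are handled essentially as in the paper: pass to relative amenability via Proposition \ref{lem-intertwining-relative-amenability}, saturate under the dual action, descend through Theorem \ref{thm-characterization} to get $Mz \lessdot_M M_i$ for a nonzero $z \in \mathcal Z(M)$, and contradict Theorem \ref{thmA} $(\rm iv)$ (your observation that $M_1 \npreceq_{M_1} B$ forces $B \subset M_1$ to be entirely nontrivial, so that Theorem \ref{thmA} applies with the indices swapped, is correct and is exactly what the paper uses implicitly). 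The only cosmetic point there is that Corollary \ref{cor-transitivity} is stated for unital subalgebras, so one should pass to $Mz \oplus M_2 z^{\perp}$ before invoking transitivity, as the paper does.

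The genuine gap is in the case $i=2$ under $\core(M_1) \npreceq_{\core(M_1)} \core(B)$, which is the case the rest of the paper actually relies on. Your reduction rests on two unproved claims. First, the ``monotonicity'' upgrading $p\,\mathcal M p \preceq_{\mathcal M} \mathcal M_2$ to $\core(M_1) \preceq_{\mathcal M} \mathcal M_2$: in the semifinite setting the hypothesis only provides \emph{some} finite trace projection $p \in \mathcal M$, not one in $\mathcal M_1$, and the net-of-unitaries criterion behind monotonicity requires a finite corner of $\mathcal M_1$; transporting $p$ to a finite trace projection of $\mathcal M_1$ is not automatic when $\mathcal M$ is not a factor. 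Second, the implication ``$\core(M_1) \preceq_{\mathcal M} \mathcal M_2$ forces $\core(M_1) \preceq_{\core(M_1)} \core(B)$'' is not what Theorem \ref{thm-control} or \cite[Theorem 1.1]{IPP05} provide (those control relative commutants and normalizers); it is a different IPP-type statement that would itself have to be proved for semifinite AFPs. The paper takes another route here: it first shows $p\mathcal M p \npreceq_{\mathcal M} \mathcal B$ (otherwise one falls back into the already-excluded $\mathcal M z \lessdot_{\mathcal M} \mathcal M_1$ situation), then applies the argument of \cite[Proposition 2.6]{BHR12} to produce a nonzero projection $f \in \mathcal Z(\mathcal M_2)$ with $f\mathcal M f = \mathcal M_2 f$, and finally the argument of \cite[Theorem 5.7]{HV12} to obtain a nonzero projection $e \in \mathcal Z(\mathcal B)$ with $e\mathcal M_1 e = \mathcal B e$, which contradicts $\core(M_1) \npreceq_{\core(M_1)} \core(B)$ directly. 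To complete your proof you would either need to supply the two missing lemmas above, or switch to this corner-equality argument.
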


\begin{proof}
We show that Assertion $(\rm ii)$ in Theorem \ref{thmB-precise} cannot hold. By contradiction, if Assertion $(\rm ii)$ in Theorem \ref{thmB-precise} does hold, then there exist $i \in \{1, 2\}$ and a nonzero finite trace projection $p \in \mathcal M$ such that $p\mathcal Mp \preceq_{\mathcal M} \mathcal M_i$. 

If $i = 1$, we have $p\mathcal Mp \preceq_{\mathcal M} \mathcal M_1$ and so $\mathcal M \preceq_{\mathcal M} \mathcal M_{1}$. By Proposition \ref{lem-intertwining-relative-amenability}, there exists a nonzero projection $z \in \mathcal Z(\mathcal M)$ such that $\mathcal Mz \lessdot_{\mathcal M} \mathcal M_1$. We can then proceed as in the proof of Theorem \ref{thmB-precise} to obtain a contradiction. 

If $i = 2$, we have $p\mathcal M p\preceq_{\mathcal M} \mathcal M_2$. We proceed in two different ways.

{\bf (a)} Since $p\mathcal M p\preceq_{\mathcal M} \mathcal M_2$, we have $\mathcal M \preceq_{\mathcal M} \mathcal M_2$. By Proposition \ref{lem-intertwining-relative-amenability}, there exists a nonzero projection $z \in \mathcal Z(\mathcal M)$ such that $\mathcal Mz \lessdot_{\mathcal M} \mathcal M_2$. Applying the proof of Theorem \ref{thmB-precise} to $i = 2$, we obtain that there exists a nonzero projection $r \in \mathcal Z(M)$ such that $Mr \lessdot_{M} M_{2}$. If $M_{2} \npreceq_{M} B$, then we obtain a contradiction using Theorem \ref{thmA} $(\rm iv)$. If $M_{2} \lessdot_{M_{2}} B$, then $M_{2} \lessdot_{M} B$. Since $B \subset M_{1}$ is with expectation, we have $B \lessdot_{M} M_{1}$ and so $M_{2} \lessdot_{M} M_{1}$ using Corollary \ref{cor-transitivity}. By Theorem \ref{thmA} $(\rm i)$, we have $r \in \mathcal Z(M) =  M' \cap B$ and  since $Mr \lessdot_{M} M_{2}$, we have $Mr \oplus M_{2}r^{\perp} \lessdot_{M} M_{2}$. Thus, we obtain that $Mr \oplus M_{2}r^{\perp} \lessdot_{M} M_{1}$ using Corollary \ref{cor-transitivity} and so $Mr \lessdot_{M} M_{1}$. We obtain again a contradiction using Theorem \ref{thmA} $(\rm iv)$.

{\bf (b)} We claim that $p\mathcal Mp  \npreceq_{\mathcal M} \mathcal B$. Indeed, otherwise we have $\mathcal M  \preceq_{\mathcal M} \mathcal B$ and by Proposition \ref{lem-intertwining-relative-amenability}, there exists a nonzero projection $z \in \mathcal Z(\mathcal M)$ such that $\mathcal M z\lessdot_{\mathcal M} \mathcal B$ and so $\mathcal Mz \lessdot_{\mathcal M} \mathcal M_1$. We can then proceed as in the proof of Theorem \ref{thmB-precise} to obtain a contradiction. Since $ p\mathcal M p \preceq_{\mathcal M} \mathcal M_2$ and $p\mathcal M p \npreceq_{\mathcal M} \mathcal B$, proceeding as in the proof of \cite[Proposition 2.6]{BHR12}, there exists a nonzero projection $f \in \mathcal M_2$ such that $f\mathcal Mf = f\mathcal M_2f$. Up to multiplying by elements in $\mathcal M_2$, we may further assume that $f \in \mathcal Z(\mathcal M_2)$ and thus we have $f \mathcal M f = \mathcal M_2 f$. The same reasoning as in the proof of \cite[Theorem 5.7]{HV12} shows that there exists a nonzero projection $e \in \mathcal Z(\mathcal B)$ such that $e \mathcal M_1e = \mathcal B e$. This however contradicts the assumption that $\core(M_{1}) \npreceq_{\core(M_{1})} \core(B)$.

In each of the cases $\core(M_{1}) \npreceq_{\core(M_{1})} \core(B)$, or $M_{2} \npreceq_{M_{2}} B$ or $M_{2} \lessdot_{M_{2}} B$, we have obtained a contradiction. Therefore, Assertion $(\rm ii)$ in Theorem \ref{thmB-precise} cannot hold and thus Assertion $(\rm i)$ in Theorem \ref{thmB-precise} gives the desired conclusion.
\end{proof}

\subsection*{Proofs of Theorem \ref{thmC} and Corollary \ref{corD}}

Keep the same notation as before. 

\begin{proof}[Proof of Theorem \ref{thmC}]
Firsly, we prove that $M_\omega = M' \cap \mathcal Z(B)^\omega$. Choose a faithful state $\varphi \in M_\ast$ such that $\varphi \circ \rE = \varphi$. Then we have $\mathcal Z(B) \subset B^\varphi \subset M^\varphi$ and thus we have $\mathcal Z(B)^\omega \subset (M^\varphi)^\omega \subset (M^\omega)^{\varphi^\omega}$. This implies that $M' \cap \mathcal Z(B)^\omega \subset M' \cap (M^\omega)^{\varphi^\omega} = M_\omega$. It remains to prove the reverse inclusion. Since $B$ is of type ${\rm I}$ and $M_{1} \npreceq_{M_{1}} B$, we have $\core(M_1)\npreceq_{\core(M_1)}\core(B)$ by Proposition \ref{prop-intertwining-semifinite}. By Corollary \ref{cor-thmB}, we have $M_\omega \subset Q$. Observe that
$$
(Q, \rE^{\omega}|_Q) = (Q_1, \rE_1^{\omega}|_{Q_1}) \ast_{B^{\omega}} (Q_2, \rE_2^{\omega}|_{Q_2}).
$$

Since $B$ is of type ${\rm I}$ and $M_{1} \npreceq_{M_{1}} B$, we have $M_1 \npreceq_{Q_1} B^\omega$ by Theorem \ref{thm-intertwining-ultraproduct}. Then we have $M_\omega \subset M_1' \cap Q \subset Q_1$ by Theorem \ref{thm-control}.
Observe that $\rE = \rE^\omega |_M$. Working inside the AFP von Neumann algebra $Q$ and since $B \subset M_2$ is entirely nontrivial, the same argument as in the proof of \cite[Theorem 4.3 (1)]{Ue12} shows that $M_\omega \subset M' \cap B^\omega$. Since $B' \cap B^\omega = \mathcal Z(B)^\omega$, we obtain $M_\omega \subset M' \cap B^\omega = M' \cap \mathcal Z(B)^\omega$.

Secondly, since the inclusion $B \subset M_{2}$ is entirely nontrivial, so is $\mathcal B \subset \mathcal M_{2}$ by Claim \ref{claim-technical}. Since $\mathcal B$ is of type ${\rm I}$ and $M_{1} \npreceq_{M_{1}} B$, we have $\mathcal M_{1} \npreceq_{\mathcal M_{1}} \mathcal B$ by Proposition \ref{prop-intertwining-semifinite} and so $\mathcal M_{1} \npreceq_{\mathcal Q_{1}} \mathcal B^{\omega}$ by Theorem \ref{thm-intertwining-ultraproduct}. Reasoning as in the first paragraph and working inside $\mathcal Q = \mathcal Q_{1} \ast_{\mathcal B^{\omega}} \mathcal Q_{2}$ instead of $Q$, we obtain that $\mathcal M_\omega = \mathcal M' \cap \mathcal Z(\mathcal B)^\omega$.

Furthermore assume that $M$ is a full factor with separable predual. Fix a faithful state $\varphi \in M_{\ast}$ such that $\varphi \circ \rE = \varphi$. Let $(t_{n})_{n}$ be any sequence in $\R$. If there exists a sequence of unitaries $v_{n} \in \mathcal U(B)$ such that $\Ad(v_{n}) \circ \sigma_{t_{n}}^{\varphi} \to \id_{M}$ then $t_{n} \to 0$ with respect to $\tau(M)$. Conversely, assume that $t_{n} \to 0$ with respect to $\tau(M)$. Then there exists a sequence of unitaries $u_{n} \in \mathcal U(M)$ such that $\Ad(u_{n}) \circ \sigma_{t_{n}}^{\varphi} \to \id_{M}$. Observe that $\lim_{n \to \infty} \|u_{n} \varphi - \varphi u_{n}\| = 0$. Denote by $(\lambda_\varphi(t))_{t \in \R}$ the canonical unitaries in $\mathcal M$ implementing the modular group $\sigma^\varphi$. By \cite[Lemma XII.6.14]{Ta03}, we have that $\Ad(\pi_{\varphi}(u_n) \lambda_\varphi(t_n)) \to \id_{\mathcal M}$ with respect to the $u$-topology in $\Aut(\mathcal M)$. Let $\omega \in \beta(\N) \setminus \N$ be any nonprincipal ultrafilter. Then we have $(\pi_{\varphi}(u_{n}) \lambda_{\varphi}(t_{n}))_{n} \in \mathfrak M^{\omega}(\mathcal M)$ and $(\pi_{\varphi}(u_{n}) \lambda_{\varphi}(t_{n}))^{\omega} \in \mathcal M' \cap \mathcal M^{\omega}$. Since $\mathcal M_\omega = \mathcal M' \cap \mathcal Z(\mathcal B)^\omega$, we obtain $(\pi_{\varphi}(u_{n}) \lambda_{\varphi}(t_{n}))^{\omega} \in \mathcal B^{\omega}$. Since $\rL_{\varphi}(\R) \subset \mathcal B$ is with expectation and since $( \lambda_{\varphi}(t_{n}))^{\omega} \in \mathcal U(\rL_{\varphi}(\R)^{\omega})$, \cite[Theorem 1.10]{MT13} implies that 
$$\pi_{\varphi^{\omega}}((u_{n})^{\omega}) = (\pi_{\varphi}(u_{n}) \lambda_{\varphi}(t_{n}))^{\omega} \cdot (( \lambda_{\varphi}(t_{n}))^{\omega})^{*} \in \core_{\varphi^{\omega}}(M^{\omega}) \cap \mathcal B^{\omega}.$$

Reasoning as in the third case of the proof of Theorem \ref{thmB}, we obtain that $(u_{n})^{\omega} \in B^{\omega}$ and so $\lim_{n \to \omega} \|u_{n} - \rE(u_{n})\|_{\varphi} = 0$. Since this holds for every $\omega \in \beta(\N) \setminus \N$, we have that $\lim_{n \to \infty} \|u_{n} - \rE(u_{n})\|_{\varphi} = 0$. Using \cite[Lemma 5.1]{Ma16}, we can find a sequence of invertible elements $b_{n} \in \Ball(B)$ such that $\lim_{n \to \infty} \|u_{n} - b_{n}\|_{\varphi} = 0$. Writing $b_{n} = v_{n} |b_{n}|$ for the polar decomposition of $b_{n} \in B$, we have $v_{n} \in \mathcal U(B)$, $\lim_{n \to \infty} \||b_{n}| - 1\|_{\varphi}$ and so $\lim_{n \to \infty}\|u_{n} - v_{n}\|_{\varphi} = 0$. Since $\lim_{n \to \infty} \|u_{n} \varphi - \varphi u_{n}\| = 0$, $\lim_{n \to \infty} \|v_{n} \varphi - \varphi v_{n}\| = 0$ and so $\lim_{n \to \infty} \|v_{n}u_{n}^{\ast} \varphi - \varphi v_{n}u_{n}^{\ast}\| = 0$. This implies that  $\Ad(v_{n} u_{n}^{\ast}) \to \id_{M}$ in $\Aut(M)$. Therefore, we obtain that $\Ad(v_{n}) \circ \sigma_{t_{n}}^{\varphi}  = \Ad(v_{n} u_{n}^{\ast}) \circ (\Ad(u_{n}) \circ \sigma_{t_{n}}^{\varphi}) \to \id_{M}$ in $\Aut(M)$.
\end{proof}

\begin{proof}[Proof of Corollary \ref{corD}]
By translating Properties (P1), (P2), (P3) into von Neumann algebraic terms, we have that $\rL(\mathcal S)$ is of type ${\rm I}$; $\rL(\mathcal R_{1})$ has no nonzero type ${\rm I}$ direct summand and so $\rL(\mathcal R_{1}) \npreceq_{\rL(\mathcal R_{1})} \rL(\mathcal S)$; the inclusion $\rL(\mathcal S) \subset \rL(\mathcal R_{2})$ is entirely nontrivial. Applying Theorem \ref{thmC}, we have  
$$\rL(\mathcal R)_\omega = \rL(\mathcal R)' \cap \mathcal Z(\rL(\mathcal S))^\omega \quad \quad \text{and} \quad \quad \rL(\core(\mathcal R))_\omega = \rL(\core(\mathcal R))' \cap \mathcal Z(\rL(\core(\mathcal S)))^\omega.$$

Let $(t_{n})_{n}$ be any sequence in $\R$. Following \cite[Definition 2.6]{HMV17}, we have  
$t_{n} \to 0$ wrt to $\tau(\mathcal R)$ if and only if there exists a sequence of unitaries $v_{n} \in \mathcal U(\rL^{\infty}(X))$ such that $\Ad(v_{n}) \circ \sigma_{t_{n}}^{\varphi} \to \id_{\rL(\mathcal R)}$ in $\Aut(\rL(\mathcal R))$. Applying Theorem \ref{thmC}, we obtain $\tau(\mathcal R) = \tau(\rL(\mathcal R))$.
\end{proof}

\subsection*{Proof of Theorem \ref{thmE} and Corollary \ref{corF}}
Keep the same notation as before.

\begin{proof}[Proof of Theorem \ref{thmE}]
For every $i \in \{1, 2\}$, put $M_{i} = B \rtimes \Gamma_{i}$. We use the same notation as in the previous subsection. Since $\Gamma_{1} \curvearrowright B$ is properly outer and $\Gamma_{1} \curvearrowright \mathcal Z(B)$ is recurrent, we have $M_{1} \npreceq_{M_{1}} B$ by Theorem \ref{thm-crossed-products}. Since $|\Gamma_{2}| \geq 2$, the inclusion $B \subset M_{2}$ is entirely nontrivial.  

$(\rm i)$ If $\Gamma_{2}$ is finite, then the map $\Phi : \langle M_{2}, B\rangle \to M_{2} : x \mapsto \frac{1}{|\Gamma_{2}|}\widehat \rE_{B}(x)$ is a faithful normal conditional expectation and in particular $M_{2} \lessdot_{M_{2}} B$. If $\Gamma_{2} \curvearrowright \mathcal Z(B)$ is recurrent and since $\Gamma_{2} \curvearrowright B$ is properly outer, we have that $M_{2} \npreceq_{M_{2}} B$ by Theorem \ref{thm-crossed-products}. 

In each of the cases when $\Gamma_2$ is finite or $\Gamma_{2} \curvearrowright \mathcal Z(B)$ is recurrent, we can apply Corollary \ref{cor-thmB} (since $M_{2} \lessdot_{M_{2}} B$ or $M_{2} \npreceq_{M_{2}} B$) to obtain $M_\omega \subset Q$. Observe that
$$
(Q, \rE^{\omega}|_Q) = (Q_1, \rE_1^{\omega}|_{Q_1}) \ast_{B^{\omega}} (Q_2, \rE_2^{\omega}|_{Q_2}).
$$
Since the inclusion $B\subset M_{1}$ is regular and $M_{1} \npreceq_{M_{1}} B$, we have $M_1 \npreceq_{Q_1} B^\omega$ by Theorem \ref{thm-intertwining-ultraproduct}. Then we have $M_\omega \subset M_1' \cap Q \subset Q_1$ by Theorem \ref{thm-control}.
Observe that $\rE = \rE^\omega |_M$. Working inside the AFP von Neumann algebra $Q$ and since $B \subset M_2$ is entirely nontrivial, the same argument as in the proof of \cite[Theorem 4.3 (1)]{Ue12} shows that $M_\omega \subset M' \cap B^\omega$. 

$(\rm ii)$ Since $\Gamma_{1} \curvearrowright \mathcal B$ is properly outer and $\Gamma_{1} \curvearrowright \mathcal Z(\mathcal B)$ is recurrent, we have $\mathcal M_{1} \npreceq_{\mathcal M_{1}} \mathcal B$ by Theorem \ref{thm-crossed-products}. By Corollary \ref{cor-thmB}, we have $M_\omega \subset Q$ and $\mathcal M_{\omega} \subset \mathcal Q$. Reasoning as in the proof of item $(\rm i)$, we obtain $M_\omega \subset M' \cap B^\omega$. 

Since the inclusion $B \subset M_{2}$ is entirely nontrivial, so is $\mathcal B \subset \mathcal M_{2}$ by Claim \ref{claim-technical}.  Since the inclusion $\mathcal B \subset \mathcal M_{1}$ is regular and $\mathcal M_{1} \npreceq_{\mathcal M_{1}} \mathcal B$, we have $\mathcal M_1 \npreceq_{\mathcal Q_1} \mathcal B^\omega$ by Theorem \ref{thm-intertwining-ultraproduct}. Reasoning as in the proof of item $(\rm i)$ and working inside $\mathcal Q = \mathcal Q_{1} \ast_{\mathcal B^{\omega}} \mathcal Q_{2}$ instead of $Q$, we obtain $\mathcal M_\omega \subset \mathcal M' \cap \mathcal B^\omega$. 

Furthermore, if $M' \cap B^\omega = \C 1$ for some $\omega \in \beta(\N) \setminus \N$, that is, $M$ is full, then for every faithful state $\varphi \in M_{\ast}$ such that $\varphi \circ \rE = \varphi$ and every sequence $(t_{n})_{n}$ in $\R$, the exact same reasoning as in the proof of Theorem \ref{thmC} shows that the following conditions are equivalent:

\begin{itemize}

\item [$\bullet$] $t_{n} \to 0$ with respect to $\tau(M)$.

\item [$\bullet$] There exists a sequence $v_{n} \in \mathcal U(B)$ such that $\Ad(v_{n}) \circ \sigma_{t_{n}}^{\varphi} \to \id_{M}$ in $\Aut(M)$.

\end{itemize}
This finishes the proof.
\end{proof}

\begin{proof}[Proof of Theorem \ref{corF}]
For every $i \in \{1, 2\}$, put $M_{i} = B \rtimes \Gamma_{i}$. We use the same notation as in the previous subsection. Since $|\Gamma_{1}| = +\infty$ and $\mathcal Z(B) = \C 1$, the action $\Gamma_{1} \curvearrowright \mathcal Z(B)$ is recurrent. By Theorem \ref{thmE} $(\rm i)$, for every $\omega \in \beta(\N) \setminus \N$, we have $M_{\omega} \subset M' \cap B^{\omega}$. Since $B$ is full, we have $B' \cap B^{\omega} = \C 1$ by \cite[Theorem 5.2]{AH12} and so $M_{\omega} = \C 1$. Therefore, $M$ is full.

Assume moreover that $B$ is of type ${\rm III_{1}}$ and $\Gamma_{1} \curvearrowright \mathcal B$ is outer. Then $\mathcal B' \cap \mathcal M_{1} = \C 1$. Since $|\Gamma_{1}| = +\infty$ and $\mathcal Z(\mathcal B) = \C 1$, the action $\Gamma_{1} \curvearrowright \mathcal Z(\mathcal B)$ is recurrent. Since $\Gamma_{1} \curvearrowright \mathcal B$ is outer and $\Gamma_{1} \curvearrowright \mathcal Z(\mathcal B)$ is recurrent, we have $\mathcal M_{1} \npreceq_{\mathcal M_{1}} \mathcal B$ by Theorem \ref{thm-crossed-products}. Then Theorem \ref{thm-control} implies that $\mathcal Z(\mathcal M) \subset \mathcal M_{1}' \cap \mathcal M \subset \mathcal M_{1}' \cap \mathcal M_{1} = \C1$. Thus, $M$ is of type ${\rm III_{1}}$. By Theorem \ref{thmE} $(\rm ii)$, we obtain $\tau(M) = \tau(B)$.
\end{proof}

\appendix

\section{Central sequences in semifinite AFP von Neumann algebras}

For every $i \in \{1, 2\}$, let $\mathcal B \subset \mathcal  M_i$ be any inclusion of $\sigma$-finite semifinite von Neumann algebras with trace preserving conditional expectation $\rE_i : \mathcal M_i \to \mathcal B$. Denote by $(\mathcal M, \rE) = (\mathcal M_1, \rE_1) \ast_{\mathcal B} (\mathcal M_2, \rE_2)$ or simply by $\mathcal M = \mathcal M_1 \ast_{\mathcal B} \mathcal M_2$ the corresponding semifinite AFP von Neumann algebra. Let $J$ be any nonempty directed set and ${\omega}$ any cofinal ultrafilter on $J$. For every $i \in \{1, 2\}$, put $\mathcal Q_i = \mathcal M_i \vee \mathcal  B^{\omega}$ and $\mathcal Q = \mathcal Q_1 \vee \mathcal Q_2 = \mathcal M \vee \mathcal B^\omega$.

The main technical result of this appendix is the following semifinite analogue of \cite[Theorem 6.3]{Io12}. Instead of adapting the proof of \cite[Theorem 6.3]{Io12}, we proceed  as in the proof of \cite[Theorem 4.1]{HU15a}.

\begin{thm}[{\cite[Theorem 6.3]{Io12}}]\label{thm-appendix}
Keep the same notation as above. Let $p \in \mathcal M$ be any nonzero finite trace projection and $\mathcal P \subset p\mathcal M p$ any von Neumann subalgebra. 

Then at least one of the following assertions hold true:
\begin{itemize}
\item [$(\rm i)$] We have $\mathcal P' \cap p\mathcal M^{\omega} p \subset p\mathcal Qp$.
\item [$(\rm ii)$] There exists $i \in \{1, 2\}$ such that $\mathcal P \preceq _{\mathcal M} \mathcal M_i$.
\item [$(\rm iii)$] There exists a nonzero projection $z \in \mathcal Z(\mathcal P' \cap p\mathcal Mp)$ such that $\mathcal P z \lessdot_{\mathcal M} \mathcal B$.
\end{itemize}
\end{thm}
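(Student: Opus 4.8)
The plan is to follow the scheme of the proof of \cite[Theorem 4.1]{HU15a}, combining the amalgamated free product malleable deformation of Ioana--Peterson--Popa \cite{IPP05} with Popa's spectral gap rigidity \cite{Po06, Io12} and the intertwining criterion of \cite[Theorem 2]{BH16}. First I would set up the ambient algebra: let $\widetilde{\mathcal M} = \mathcal M \ast_{\mathcal B} (\mathcal B \ovt \rL(\Z))$, which is again a semifinite AFP von Neumann algebra carrying a faithful normal semifinite trace $\widetilde{\Tr}$ extending $\Tr$, and which can be rewritten as $\widetilde{\mathcal M} = \widetilde{\mathcal M}_1 \ast_{\mathcal B} \mathcal M_2$ with $\widetilde{\mathcal M}_1 = \mathcal M_1 \ast_{\mathcal B}(\mathcal B \ovt \rL(\Z))$, and symmetrically with the roles of $1$ and $2$ exchanged. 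Fixing a selfadjoint element $h$ with diffuse spectrum generating $\rL(\Z)$ and putting $u_t = \exp(\mathrm{i} t h) \in \mathcal B \ovt \rL(\Z)$, $\theta_t = \Ad(u_t) \in \Aut(\widetilde{\mathcal M})$, one records the usual properties: $\theta_t$ preserves $\widetilde{\Tr}$, $\theta_t|_{\mathcal B} = \id$, $u_t \to 1$ strongly, and $(\theta_t)$ is s-malleable through the flip automorphism of $\rL(\Z)$ that fixes $\mathcal M$, so that Popa's transversality inequality holds. Since $p \in \mathcal M$ is a finite trace projection, cutting down by $p$ yields a trace-preserving deformation of the \emph{finite} von Neumann algebra $p\widetilde{\mathcal M}p$ by the u.c.p.\ maps $x \mapsto p\,\theta_t(x)\,p$, converging pointwise in $\|\cdot\|_2$ to the identity; this amplifies to the tracial ultrapowers $p\widetilde{\mathcal M}^{\omega}p \supset p\mathcal M^{\omega}p$ exactly as in \cite{Io12, Ue10, HU15a}.

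Assume now that neither $(\rm ii)$ nor $(\rm iii)$ holds; the goal is $(\rm i)$. The first key step is a spectral gap dichotomy for $\mathcal P$: I claim that, under the negations of $(\rm ii)$ and $(\rm iii)$, the deformation converges uniformly on the unit ball, $\lim_{t \to 0} \sup_{x \in \Ball(\mathcal P)} \|\theta_t(x) - x\|_{2} = 0$. Otherwise, the transversality inequality would produce $t_n \downarrow 0$ and $x_n \in \Ball(\mathcal P)$ with $\|\theta_{t_n}(x_n) - \rE_{p\mathcal Mp}(\theta_{t_n}(x_n))\|_2$ bounded away from $0$, where $\rE_{p\mathcal Mp} : p\widetilde{\mathcal M}p \to p\mathcal Mp$ is the trace-preserving conditional expectation. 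Feeding this into Popa's intertwining criterion \cite[Theorem 2]{BH16}, applied to the orthogonal complement of $\rL^2(p\mathcal Mp)$ in $\rL^2(p\widetilde{\mathcal M}p)$ --- which by the free product structure is weakly contained, as a bimodule, in $\rL^2(\mathcal M) \otimes_{\mathcal B} \rL^2(\mathcal M)$ --- and analysing the resulting data inside $\mathcal M_1 \ast_{\mathcal B} \mathcal M_2$, one would obtain either $\mathcal P \preceq_{\mathcal M} \mathcal M_i$ for some $i \in \{1, 2\}$, or a nonzero projection $z \in \mathcal Z(\mathcal P' \cap p\mathcal Mp)$ with $\mathcal P z \lessdot_{\mathcal M} \mathcal B$, contradicting our assumptions.

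The second step transfers the uniform convergence to the relative commutant and then concludes. From uniform convergence on $\Ball(\mathcal P)$ one deduces, by Popa's spectral gap argument (a two-variable $\|\cdot\|_2$-estimate using that every element of $\mathcal N := \mathcal P' \cap p\mathcal M^{\omega}p$ commutes with $\mathcal U(\mathcal P)$), that $\theta_t^{\omega} \to \id$ uniformly on $\Ball(\mathcal N)$ as well. Finally, uniform convergence of the s-malleable deformation on the von Neumann subalgebra $\mathcal N \subset p\mathcal M^{\omega}p$ forces, by a Popa continuity/path argument, that $\mathcal N$ is contained in the ``$\theta$-rigid'' corner $p\big(\mathcal M^{\omega} \vee (\mathcal B \ovt \rL(\Z))^{\omega}\big)p$; intersecting with $p\mathcal M^{\omega}p$ and invoking free independence over $\mathcal B^{\omega}$ inside $\widetilde{\mathcal M}^{\omega}$ (a Mercer/uniqueness-type argument) identifies this intersection with $p(\mathcal M \vee \mathcal B^{\omega})p = p\mathcal Q p$, which is precisely $(\rm i)$.

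The step I expect to be the main obstacle is the last structural move: extracting the containment $\mathcal N \subset p\mathcal Q p$ from the deformation-rigidity of $\mathcal N$ requires the fine structure of the free product over $\mathcal B$ and the precise behaviour of $\theta_t$ near the amalgam, and it must be carried out at the level of the ultrapower $\widetilde{\mathcal M}^{\omega}$ and of the finite corner $p\widetilde{\mathcal M}^{\omega}p$ simultaneously. Threading the finite-corner reduction, the semifiniteness of the ambient algebra, and the ultrapower consistently through this argument --- rather than the deformation set-up, which is routine --- is where the real care will be needed.
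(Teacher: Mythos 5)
Your toolbox is the right one (the IPP malleable deformation, Popa's transversality, spectral gap, and the intertwining criterion of \cite[Theorem 3.3]{BHR12}), but the logical architecture is inverted at the decisive step, and this creates a genuine gap. You claim first that the negations of $(\rm ii)$ and $(\rm iii)$ force $\lim_{t\to 0}\sup_{x\in\Ball(\mathcal P)}\|\theta_t(x)-x\|_2=0$. This cannot be right: uniform convergence on $\Ball(\mathcal P)$ at a fixed small $t$ is precisely the hypothesis of \cite[Theorem 3.3]{BHR12}, which would immediately yield $\mathcal P\preceq_{\mathcal M}\mathcal M_i$, i.e.\ assertion $(\rm ii)$ --- contradicting your standing assumption. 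Moreover the argument you sketch for it fails: the defect vectors $\delta_{t_n}(x_n)$ for $x_n\in\Ball(\mathcal P)$ are not almost central for $\mathcal P$, so they produce neither an intertwining bimodule nor a relative-amenability expectation. The correct statement --- and the one the paper proves as Claim \ref{claim1} --- is that the negation of $(\rm iii)$ alone forces uniform convergence of $\Theta_t=\theta_t^{\omega}$ on the ball of the relative commutant $\mathcal P'\cap p\mathcal M^{\omega}p$: elements there do almost commute with $\mathcal P$, so their defect vectors are almost $\mathcal P$-central vectors in $\rL^2(p\widetilde{\mathcal M}p)\ominus\rL^2(p\mathcal Mp)$, which sits inside $\rL^2(\mathcal M)\otimes_{\mathcal B}\mathcal L$, and such vectors would give a corner of $\mathcal P$ amenable relative to $\mathcal B$. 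Your Step 2 (uniform convergence on $\Ball(\mathcal P)$ implies uniform convergence on $\Ball(\mathcal P'\cap p\mathcal M^{\omega}p)$) is false as an implication --- take $\mathcal P=\C p$ --- the correct input for controlling the relative commutant is the spectral gap coming from not-$(\rm iii)$, not any rigidity of $\mathcal P$ itself.

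Second, your endgame asserts that uniform $\Theta$-rigidity of $\mathcal N=\mathcal P'\cap p\mathcal M^{\omega}p$ places it inside the ``rigid corner'' whose intersection with $p\mathcal M^{\omega}p$ is $p\mathcal Qp$. This identification is exactly the hard point; it is not a consequence of free independence alone, and it is where the hypothesis not-$(\rm ii)$ must be used (you never invoke it after your Step 1). The paper instead argues by contradiction: if some $X\in\mathcal N$ is not in $p\mathcal Qp$, one normalizes $Y=X-\rE_{\mathcal Q}(X)$ so that $\rE_{\omega}(Y^*Y)$ is a projection $z$, represents $Y$ by a net $(y_i)$ with $\rE_{\mathcal B}(b^*y_ia)\to0$ (this is how the orthogonality $Y\perp\mathcal Q$ enters), and combines the almost-$\theta_t$-invariance of $Y$ from Claim \ref{claim1} with transversality to show $\sup_{x\in\Ball(\mathcal Pz)}\|x-\theta_{2t}(x)\|_2\le\frac12$; then \cite[Theorem 3.3]{BHR12} yields $\mathcal Pz\preceq_{\mathcal M}\mathcal M_i$, contradicting not-$(\rm ii)$. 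So the rigidity is transferred from the relative commutant back to $\mathcal P$, not the other way around. (A minor further point: to have Popa's transversality you need an s-malleable deformation, hence the copy of $\rL(\F_2)$ rather than $\rL(\Z)$ in $\widetilde{\mathcal M}$.)
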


\begin{proof}
Assuming that assertions $(\rm ii)$ and $(\rm iii)$ do not hold, we prove that assertion $(\rm i)$ holds. We divide the proof of Theorem \ref{thm-appendix} into a series of claims.

Following \cite{IPP05}, put $\widetilde{\mathcal M} = \mathcal M \ast_{\mathcal B}(\mathcal B \ovt \rL(\F_2))$ which is a semifinite AFP von Neumann algebra. Denote by $(\theta_t)_{t \in \R}$ the trace preserving malleable deformation associated with the semifinite AFP von Neumann algebra $\mathcal M = \mathcal M_1 \ast_{\mathcal B} \mathcal M_2$ (see \cite{BHR12} for further details). Put $\delta_t (x)= \theta_t(x) - \rE_{\mathcal M}(\theta_t(x))$ for every $x \in \mathcal M$. Also, put $\Theta_t (X)= \theta_t^{\omega}(X)$ for every $X \in \widetilde{\mathcal M}^{\omega}$. We simply use the notation $\|\cdot\|_2$ for the $\rL^2$-norm associated with the canonical faithful normal semifinite trace on any of the semifinite von Neumann algebras considered in this appendix. 

Although the map $\R \to \Aut(\widetilde{\mathcal M}^{\omega}) : t \mapsto \Theta_t$ need not be continuous, we nevertheless prove the following uniform convergence result.

\begin{claim}\label{claim1}
We have
\begin{equation}\label{eq-uniform}
\lim_{t \to 0} \sup_{X \in \Ball(\mathcal P' \cap p \mathcal M^{\omega} p)} \|X - \Theta_t(X) \|_2 = 0.
\end{equation}
\end{claim}

\begin{proof}[Proof of Claim \ref{claim1}]
The proof is a straightforward generalization of the one of \cite[Theorem 4.1, Claim]{HU15a} using moreover \cite[Theorem A.1 and Lemma A.2]{HU15b}. We nevertheless give the details for the reader's convenience.  

Assume by contradiction that \eqref{eq-uniform} does not hold. Thus there exist $c > 0$, a sequence $(t_k)_k$ of positive reals such that $\lim_k t_k = 0$ and a sequence $(X_k)_k$ in $\Ball(\mathcal P' \cap p \mathcal M^{\omega} p)$ such that $\|X_k - \Theta_{2t_k}(X_k)\|_2 \geq 2c$ for all $k \in \N$. Write $X_k = (x_{j}^{(k)})^{\omega}$ with $x_{j}^{(k)} \in \Ball(p \mathcal M p)$ satisfying $\lim_{j \to {\omega}} \|y x_j^{(k)} - x_{j}^{(k)} y\|_2 = 0$ and $2c \leq \|X_k - \Theta_{2t_k}(X_k)\|_2 = \lim_{j \to {\omega}} \|x_j^{(k)} - \theta_{2t_k}(x_j^{(k)})\|_2$ for all $k \in \N$ and all $y \in \mathcal P$.

Denote by $I$ the directed set of all pairs $(\varepsilon, \mathcal F)$ with $\varepsilon  > 0$ and $\mathcal F \subset \Ball(\mathcal P)$ finite subset with order relation $\leq$ defined by
$$(\varepsilon_1, \mathcal F_1) \leq (\varepsilon_2, \mathcal F_2) \quad \text{if and only if} \quad \varepsilon_2 \leq \varepsilon_1, \mathcal F_1 \subset \mathcal F_2.$$
Let $i = (\varepsilon, \mathcal F) \in I$ and put $\delta = \min(\frac{\varepsilon}{6}, \frac{c}{8})$. Choose $k \in \N$ large enough so that $\|p - \theta_{t_k}(p)\|_2 \leq \delta$ and $\|a - \theta_{t_k}(a)\|_2 \leq \varepsilon/6$ for all $a \in \mathcal F$. Then choose $j \in J$ large enough so that $\|x_j^{(k)} - \theta_{2t_k}(x_j^{(k)})\|_2 \geq c$ and $\|a x_j^{(k)} - x_j^{(k)} a\|_2 \leq \varepsilon /3$ for all $a \in \mathcal F$.

Put $\xi_i = \theta_{t_k}(x_j^{(k)}) - \rE_{ \mathcal M }(\theta_{t_k}(x_j^{(k)})) \in \rL^2( \widetilde{\mathcal M} ) \ominus \rL^2( \mathcal M )$ and $\eta_i = p \xi_i p \in \rL^2(p \widetilde{\mathcal M} p) \ominus \rL^2(p \mathcal M p)$. By the transversality property of the malleable deformation $(\theta_t)_{t \in \R}$ (see \cite[Lemma 2.1]{Po06}), we have 
$$
    \|\xi_i\|_2 \geq \frac{1}{2} \|x_j^{(k)} - \theta_{2t_k} (x_j^{(k)})\|_2 \geq \frac{c}{2}.
$$
Observe that $\|p\theta_{t_k}(x_j^{(k)})p - \theta_{t_k}(x_j^{(k)})\|_2 \leq 2 \|p - \theta_{t_k}(p)\|_2 \leq 2 \delta$. Since $p \in \mathcal M$, by Pythagoras theorem, we moreover have  
$$\|p\theta_{t_k}(x_j^{(k)})p - \theta_{t_k}(x_j^{(k)})\|_2^2 = \|\rE_{ \mathcal M }(p\theta_{t_k}(x_j^{(k)})p - \theta_{t_k}(x_j^{(k)}))\|_2^2 + \|\eta_i - \xi_i\|_2^2$$ 
and hence $\|\eta_i - \xi_i\|_2 \leq 2\delta$. This implies that 
$$\|\eta_i\|_2 \geq \|\xi_i\|_2 - \|\eta_i - \xi_i\|_2 \geq  \frac{c}{2} - 2 \delta \geq \frac{c}{4}.$$
For all $x \in p \mathcal M p$, we have  
$$\|x \eta_i\|_2 = \|(1 - \rE_{ \mathcal M}) (x \theta_{t_k} (x_j^{(k)})p) \|_2 \leq \|x \theta_{t_k}(x_j^{(k)})p\|_2 \leq \|x\|_2.$$
By Popa's spectral gap argument \cite{Po06}, for all $a \in \mathcal F \subset \Ball(\mathcal P) \subset \Ball(p \mathcal M p)$, we have  
  \begin{align*}
    \| a \eta_i - \eta_i a \|_2
    & =
    \|(1 - \rE_{\mathcal M}) (a \theta_{t_k}( x_j^{(k)} )p - p\theta_{t_k}( x_j^{(k)}) a)\|_2 \\
    & \leq \|a \theta_{t_k}( x_j^{(k)})p - p\theta_{t_k}( x_j^{(k)}) a\|_2 \\
    & \leq 2 \|a - \theta_{t_k}(a)\|_2 + 2 \|p - \theta_{t_k}(p)\|_2 + \|a x_j^{(k)} - x_j^{(k)} a\|_2 \\
    & \leq \frac{\varepsilon}{3} + \frac{\varepsilon}{3} + \frac{\varepsilon}{3}  = \varepsilon.
  \end{align*}
Thus $\eta_i \in \rL^2(p \widetilde {\mathcal M} p) \ominus \rL^2(p \mathcal M p)$ is a net of vectors satisfying $\limsup_i \|x \eta_i\|_2 \leq \|x\|_2$ for all $x \in p \mathcal M p$, $\liminf_i \|\eta_i\|_2 \geq~\frac{c}{4}$ and $\lim_i \|a \eta_i - \eta_i a\|_2 = 0$ for all $a \in \mathcal P$. 

By construction of the AFP von Neumann algebra $\widetilde{\mathcal M} = \mathcal M \ast_{\mathcal B}(\mathcal B \ovt \rL(\F_2))$, there exists a $\mathcal B$-$\mathcal L$-bimodule such that we have $\rL^2(\widetilde {\mathcal M}) \ominus \rL^2(\mathcal M) \cong \rL^2(\mathcal M) \otimes_{\mathcal B} \mathcal L$ as $\mathcal M$-$\mathcal M$-bimodules (see e.g.\ \cite[Section 2]{Ue98}). The existence of the net $(\eta_i)_{i \in I}$ in combination with \cite[Lemma A.2]{HU15b} shows that there exists a nonzero projection $z \in \mathcal Z(\mathcal P' \cap p \mathcal Mp)$ such that $\mathcal P z \lessdot_{\mathcal M} \mathcal B$. This however contradicts our assumption that assertion $(\rm iii)$ does not hold and finishes the proof of the claim.
\end{proof}

By contradiction, assume that $\mathcal P' \cap p \mathcal M^{\omega} p \not\subset p\mathcal Q p$. Let $X \in \mathcal P' \cap p \mathcal M^{\omega} p \setminus p\mathcal Q p$. Put $Y =  X - \rE_{\mathcal Q}(X)$. Observe that $Y \in \mathcal P' \cap p\mathcal M^{\omega} p$, $Y \neq 0$ and $\rE_{\mathcal Q}(Y) = 0$. Denote by $\rE_\omega : \mathcal M^\omega \to \mathcal M$ the canonical faithful normal conditional expectation. Put $y = \rE_{\omega}(Y^*Y) \in (\mathcal P' \cap p \mathcal M p)^+$. Define the nonzero spectral projection $z = \mathbf 1_{[\frac{1}{2}\|y\|_\infty, \|y\|_\infty]}(y) \in \mathcal P' \cap p \mathcal M p$ and put $c = (y z)^{-1/2} \in (\mathcal P' \cap p \mathcal M p)^+$. Then we have
$$\rE_{\omega}((Yc)^*(Yc)) = c \, \rE_{\omega}(Y^* Y) \, c = c \, y \, c = z.$$
Up to replacing $Y$ by $Yc$ which still lies in $\mathcal P' \cap p\mathcal M^{\omega} p$, we may assume that $Y \in \mathcal P' \cap p\mathcal M^{\omega} p$, $Y \neq 0$, $\rE_{\mathcal Q}(Y) = 0$ and $\rE_{\omega}(Y^*Y) = z$. By Claim \ref{claim1}, we may choose $t \in (0, 1/2)$ small enough such that $\max \{ \|Y - \Theta_t(Y)\|_2, \|z - \theta_t(z)\|_2 \} < \frac{1}{64}$.

\begin{claim}\label{claim2}
There exists a uniformly bounded net $(y_i)_{i \in I}$ in $p \mathcal M p$ that satisfies the following properties:
\begin{itemize}
\item [(P1)] $y_i^*y_i \to z$ $\sigma$-weakly as $i \to \infty$.
\item [(P2)] $\lim_{i \to \infty} \|\rE_{\mathcal B}(b^* y_i a)\|_2 = 0$ for all $a, b \in p \mathcal M$.
\item [(P3)] $\lim_{i \to \infty} \|x y_i - y_i x\|_2 = 0$ for all $x \in \mathcal P$.
\item [(P4)] $\sup_{i \in I} \|y_i - \theta_t(y_i)\|_2 \leq \frac{1}{64}$.
\end{itemize}
\end{claim}

\begin{proof}[Proof of Claim \ref{claim2}]
Denote by $I$ the directed set of all tuples $(\delta, \mathcal F, \mathcal G, \Xi)$ where $\delta > 0$ and $\mathcal F \subset p \mathcal M$, $\mathcal G \subset p \mathcal M p$, $\Xi \subset \rL^2(p \mathcal M p)$ are finite subsets, with order relation $\leq$  defined by 
$$(\delta_1, \mathcal F_1, \mathcal G_1, \Xi_1) \leq (\delta_2, \mathcal F_2, \mathcal G_2, \Xi_2) \quad \text{if and only if} \quad \delta_2 \leq \delta_1, \mathcal F_1 \subset \mathcal F_2, \mathcal G_1 \subset \mathcal G_2, \Xi_1 \subset \Xi_2.$$

Write $Y = (y_j)^{\omega}$ where $y_j \in p \mathcal M p$ and $\sup_{j \in J} \|y_j\|_\infty \leq \|Y\|_\infty$. First, we have $z = \rE_{\omega}(Y^*Y) = \sigma\text{-weak} \lim_{j \to {\omega}} y_j^*y_j$. Secondly, for all $a, b \in p \mathcal M$, we have 
$$0 = \rE_{\mathcal B^{\omega}}(b^*\rE_{\mathcal Q}(Y)a) = \rE_{\mathcal B^{\omega}}(\rE_{\mathcal Q}(b^*Ya)) = \rE_{\mathcal B^{\omega}}(b^* Y a) = (\rE_{\mathcal B}(b^* y_j a))^{\omega}.$$
Thirdly, for all $x \in \mathcal P$, we have $0 = \|x Y - Y x\|_2 = \lim_{j \to {\omega}} \|x y_j - y_j x\|_2$.
Finally, we have $\frac{1}{64} > \|Y - \Theta_t(Y)\|_2 = \lim_{j \to {\omega}} \|y_j - \theta_t(y_j)\|_2$.

Fix $i = (\delta, \mathcal F, \mathcal G, \Xi) \in I$. Using the above observations, we may choose $j \in J$ large enough so that the element $y_j \in p \mathcal M p$ that we denote now by $y_i$ satisfies $\|y_i\|_\infty \leq \|Y\|_\infty$; $|\langle (y_i^* y_i - z)\xi, \eta\rangle| \leq \delta$ for all $\xi, \eta \in \Xi$; $\|\rE_{\mathcal B}(b^* y_i a)\|_2 \leq \delta$ for all $a, b \in \mathcal F$; $\|x y_i - y_i x\|_2 \leq \delta$ for all $x \in \mathcal G$; $\|y_i - \theta_t(y_i)\|_2 \leq 1/64$. Therefore, the uniformly bounded net $(y_i)_{i \in I}$ satisfies all the desired properties.
\end{proof}

\begin{claim}\label{claim3}
We have 
$$\sup_{x \in \Ball(\mathcal P z)} \|x - \theta_{2t}(x)\|_2 \leq \frac12.$$
\end{claim}

\begin{proof}[Proof of Claim \ref{claim3}]
Using (P2) and the proof of \cite[Theorem 2.5, Claim]{BHR12}, we obtain
\begin{equation}\label{eq-convergence}
\forall c, d \in p(\widetilde{\mathcal M} \ominus \mathcal M), \quad \lim_{i \to \infty} \|\rE_{\mathcal M}(d^* y_i c)\|_2 = 0.
\end{equation}
Let $x \in \Ball(\mathcal P z)$ be any element. We have
\begin{align*}
\|\delta_t(x)\|_2^2 &= \langle \delta_t(z x), \delta_t(x) \rangle \\
&\leq | \langle z\delta_t( x), \delta_t(x) \rangle | + \|z - \theta_t(z)\|_2 \\
&\leq \lim_{i \to \infty} | \langle y_i\delta_t( x), y_i\delta_t(x) \rangle | + \frac{1}{64} \quad (\text{by (P1)}) \\
& \leq \limsup_{i \to \infty} | \langle \delta_t(y_i x), y_i\delta_t(x) \rangle | + \limsup_{i \to \infty} \|y_i - \theta_t(y_i)\|_2+ \frac{1}{64} \\
& = \limsup_{i \to \infty} | \langle \delta_t(x y_i), y_i\delta_t(x) \rangle | + \limsup_{i \to \infty} \|y_i - \theta_t(y_i)\|_2+ \frac{1}{64} \quad (\text{by (P3)}) \\
& \leq \limsup_{i \to \infty} | \langle \delta_t(x) y_i, y_i\delta_t(x) \rangle | + 2\limsup_{i \to \infty} \|y_i - \theta_t(y_i)\|_2+ \frac{1}{64} \\
& \leq \limsup_{i \to \infty} \left(\|\rE_{\mathcal M} (\delta_t(x) y_i\delta_t(x)^*)\|_2 \cdot \|y_{i}\|_{2}\right) + 2\limsup_{i \to \infty} \|y_i - \theta_t(y_i)\|_2+ \frac{1}{64} \\
&\leq \frac{1}{16}  \quad (\text{by (P4) and \eqref{eq-convergence}}). 
\end{align*}
Combining the above inequality with the transversality property of the malleable deformation $(\theta_t)_{t \in \R}$ (see \cite[Lemma 2.1]{Po06}), we obtain 
\begin{equation*}
\sup_{x \in \Ball(\mathcal P z)} \|x - \theta_{2t}(x)\|_2 \leq \sup_{x \in \Ball(\mathcal P z)} 2 \|\delta_t(x)\|_2 \leq \frac12. \qedhere
\end{equation*} 
\end{proof}

Combining Claim \ref{claim3} with \cite[Theorem 3.3]{BHR12}, we obtain that there exists $i \in \{1, 2\}$ such that $\mathcal Pz \preceq_{\mathcal M} \mathcal M_i$. This further implies that $\mathcal P \preceq_{\mathcal M} \mathcal M_i$ and contradicts the assumption that assertion $(\rm ii)$ does not hold. This finally shows that $\mathcal P' \cap p \mathcal M^{\omega} p \subset \mathcal Q$ and finishes the proof of the theorem.
\end{proof}

\bibliographystyle{plain}

\end{document}